\newcommand{\RR}{{\rm\bf R}}
\newcommand{\mm}{\mbox{$\mathfrak m$}}
\newtheorem{thm}{Theorem}[section]
\newtheorem{lem}[thm]{Lemma}
\newtheorem{cor}[thm]{Corollary}
\newtheorem{proposition}[thm]{Proposition}
\newtheorem{Def}[thm]{Definition}
\newtheorem{rmk}[thm]{Remark}
\newtheorem{ex}[thm]{Example}
\newenvironment{proof}
{\noindent\textbf{Proof}\hspace*{1em}\normalfont}
\newcommand{\qed}{\hfill $\Box$ 
}
\newcommand{\END}{\hfill\mbox{\raggedright$\Diamond$}}
\newcommand{\etal}{{\em et al.}}
\newcommand{\BB}{\mbox{$\mathcal B$}}
\newcommand{\CC}{\mbox{$\mathcal C$}}
\newcommand{\EE}{\mbox{$\mathcal E$}}
\newcommand{\E}{\mbox{{\footnotesize $\mathcal E$}}}
\newcommand{\MM}{\mbox{$\mathcal M$}}
\newcommand{\NN}{\mbox{$\mathcal N$}}
\newcommand{\N}{\mbox{{\footnotesize $\mathcal N$}}}
\newcommand{\LL}{\mbox{$\mathcal L$}}
\newcommand{\TT}{\mbox{$\mathcal T$}}
\newcommand{\T}{\mbox{{\footnotesize $\mathcal T$}}}
\newcommand{\candy}{\text{\scalebox{.9}{$\triangleright\hskip-.15cm\circ\hskip-.15cm\triangleleft\hskip0cm$}}}
\title{Networks with asymmetric inputs:  lattice of synchrony subspaces}
\author{
M.A.D. Aguiar$ $\thanks{Partial support by CMUP (UID/MAT/00144/2013), which is funded by FCT (Portugal) with national (MEC) and European structural funds through the programs FEDER, under the partnership agreement PT2020} \\
{\small Centro de Matem\'atica da Universidade do Porto,}\\
{\small Rua do Campo Alegre, 687, 4169-007 Porto, Portugal}\\
{\small Faculdade de Economia, Universidade do Porto,} \\
{\small Rua Dr Roberto Frias, 4200-464 Porto, Portugal}\\
\\
 {\small E-mail: maguiar@fep.up.pt}}
\date{\today}
\begin{document}

\maketitle 

\begin{abstract}

We consider coupled cell networks with asymmetric inputs  and study their lattice of synchrony subspaces. For the particular case of 1-input regular coupled cell networks we describe the join-irreducible synchrony subspaces for their lattice of synchrony subspaces, first in terms of the eigenvectors and generalized eigenvectors that generate them, and then by giving a characterization of the possible patterns of the associated balanced colourings. The set of the join-irreducible synchrony subspaces is join-dense for the lattice, that is, the lattice can be obtained by sums of those join-irreducible elements (M. Aguiar, P. Ashwin, A. Dias, and M. Field. 
Dynamics of coupled cell networks: synchrony, heteroclinic cycles and inflation,  
{\em J. Nonlinear Sci.} {\bf 21} (2) (2011) 271--323), and we conclude about the possible patterns of balanced colourings associated to the synchrony subspaces in the lattice.
We also consider the disjoint union of two regular coupled cell networks with the same cell-type and the same edge-type. We show how to obtain the lattice of synchrony subspaces for the network union from the lattice of synchrony subspaces for the component networks. The lattice of synchrony subspaces for a homogeneous coupled cell network is given by the intersection of the lattice of synchrony subspaces for its identical-edge subnetworks per each edge-type 
(M. A. D. Aguiar and A. P. S. Dias. The lattice of synchrony subspaces of a coupled cell network: Characterization and computation algorithm,
{\em Journal of Nonlinear Science}, {\bf 24} (6) (2014), 949--996). This, together with the results in this paper, on the lattice of synchrony subspaces for 1-input regular networks and on the lattice of synchrony subspaces for the disjoint union of networks, define a procedure to obtain the lattice of synchrony subspaces for homogeneous coupled cell networks with asymmetric inputs.

\vspace{3mm}

\noindent 2010 Mathematics Subject Classification: 34C15 37C10 06B23 15A18

\vspace{0.1in}

\noindent Keywords: Coupled cell network; asymmetric inputs; coupled cell system; synchrony subspace; lattice. 

\end{abstract}

\section{Introduction}

Coupled cell systems have been for long a focus of interest to the scientific community, including  biologists, physicists and mathematicians, since these systems are used as models in a wide range of real-world applications. See, for example, Albert and Barab\'asi~\cite{AB02}, Newman~\cite{N03}, Boccaletti~\etal~\cite{BLMCH06}, Arenas~\etal~\cite{ADKZ08}, and references therein. The structure of a coupled cell system can be abstracted by a {\em coupled cell network} - each cell represents an individual dynamical system and the connections represent the mutual interactions between those individual dynamics. See, for example, the formalism of Golubitsky and Stewart~\cite{SGP03}, \cite{GST05}, \cite{GS05} which is more algebraic and the formalism of Field~\cite{F04} which is more combinatorial. 

Coupled cell networks can be represented by directed graphs where the vertices are the cells and the edges represent the connections between them. Edges of the same type indicate the same kind of interaction, that is, the same coupling function.
There is extensive work dedicated to the study of statistical properties of complex networks, that is, networks with non-trivial topological features, which do not occur in simple networks such as lattices or random graphs. See, for example, the reviews Albert and Barab\'asi~\cite{AB02} and Newman~\cite{N03}, and references therein. A challenging problem with interest from the point of view of applications is how to infer, from measured real data, the connectivity structure, as well as the coupling functions associated to the different interactions, of a network. See, for example, the work of Stankovski~\etal~\cite{STMS15} and the very recent review Stankovski~\etal~\cite{SPMS17}. As noted in Newman~\cite{N03}, although there are excellent results on statistical properties that characterize the structure and behaviour of networked systems, and on the modelling of networks that can help to understand the meaning of those properties, there are still few studies about the effects of the network structure on the dynamical system behaviour. One of the main aims in the study of coupled cell systems is to characterize the dynamical properties of the systems that are admissible by a network based only on the network structure and independent of the specific dynamics at the nodes and the specific coupling functions. One such relevant dynamical property is the phenomena of synchronization. 

In the literature there are two main notions of {\em synchronization}: one, which is most often discussed in the context of phase oscillators, where the coupling structure may induce synchronization in the sense that all phases become equal at some point or points in time; the other, which we consider here, where a subset of the coupled identical dynamical systems, identical in the sense of having the  same state space and the same internal dynamics, follow exactly the same dynamics, that is, considering the same initial condition in the state space of each of those systems, they follow the same trajectory in the respective state space. This second concept is sometimes referred to as {\em cluster synchronization}. See Abrams~\etal~\cite{APM16} to an insight into the history of these two types of synchronization. In \cite{ADKZ08},  Arenas~\etal \ revise research undertaken to understand the impact of a variety of topological structures of interactions on synchronization properties. In Watts and Strogatz~\cite{WS98},  it is shown that models of dynamical systems with small-world coupling structure enhance synchronizability. For pattern formation and synchronization of coupled oscillators, see Kuramoto~\cite{K12}.

In this work we consider the patterns of synchronization that can occur for coupled cell networks with asymmetric inputs. Those patterns of synchronization correspond to the {\em synchrony subspaces} for the network - subspaces defined by equalities of cell coordinates and that are flow-invariant by all the coupled cell systems that are compatible with the network structure; in particular, systems with additive input structure, which are relevant from the point of view of applications as they are commonly used to model coupled oscillators, see Kuramoto~\cite{K12}.
The characterization of the set of synchrony subspaces  for a network is important since the existence of these flow-invariant subspaces can have a strong impact on the dynamics and favor the existence of non-generic dynamical behavior like robust heteroclinic cycles and networks and bifurcation phenomena. See, for example, \cite{AADF11}, \cite{AP13}, \cite{F14}, \cite{AD17}, \cite{F17} and \cite{GNS04}, \cite{DP10}, \cite{GL09}, \cite{SG11}.

Networks where all cells are identical, in the sense that they have the same phase space and internal dynamics, and such that the number of input edges per edge-type is the same for all cells are called {\em homogeneous}.  The structure of such networks can be described by adjacency matrices: for each edge-type there is one {\em adjacency matrix}, with rows and columns indexed by the cells of the network, such that the entry in row $i$ column $j$ corresponds to the number of input edges of that type from cell $j$ to cell $i$. If there is only one edge-type then there is only one adjacency matrix and the network is said {\em regular}. In this work, we consider a particular type of homogeneous networks - {\em with asymmetric inputs} - where each cell receives exactly one input edge of each type. 
Moreover, we consider that the networks are finite and, unless otherwise stated, that they are connected. 

There are several works in the literature that consider networks with asymmetric inputs. In \cite{AF10},  Agarwal and Field give a necessary and sufficient condition for the dynamical equivalence of two coupled cell networks with asymmetric inputs. Aguiar~\etal~\cite{AADF11} and Field~\cite{F14},~\cite{F17} consider the realization of heteroclinic cycles and networks for homogeneous networks with asymmetric inputs. In \cite{NRS16}, Nijholt~\etal\ introduce the concept of fundamental network for homogeneous networks with asymmetric inputs which reveals the hidden symmetries of the networks. More recently, Aguiar~\etal~\cite{ADS17} give a characterization of those fundamental networks.
In \cite{G14}, Ganbat gives the complete classification of codimension-one synchrony-breaking steady-state bifurcations for 1- input regular coupled cell networks. In \cite{NRS17}, Nijholt~\etal use projection blocks to describe the bifurcations of a particular type of 1-input regular coupled cell networks, with a ring and only one tail, which they call ring feed-forward networks.

Synchrony-breaking bifurcations are very relevant from the point of view of applications and the identification of the synchrony subspaces of a network is a crucial aspect in their study. For example, the synchrony-breaking bifurcation analysis, for a 1-input network with three-cells, conducted in Golubitsky~\etal~\cite{GPSZ09} has implications for certain models of the auditory system, in particular, models of the basilar membrane and attached hair bundles. More  concretely, they analyse how the periodic forcing of the first node in a chain of coupled identical systems, corresponding to a 1-input network with three-cells, whose internal dynamics is each tuned near a point of Hopf bifurcation, can lead naturally to successive amplification of the incoming signal.

Here we are interpreting networks with asymmetric inputs as {\em unweighted}, that is, there is no weight associated to the edges, which is equivalent to say that, each entry $ij$ of an adjacency matrix of such a network either is $1$ or $0$, whether or not there is a connection from cell $j$ to $i$. Nevertheless, they can be considered as a special type of weighted networks in which arrows of the same type have associated the same weight and different edge-types represent different weights. In this case, the non-zero entries of an adjacency matrix of the network is the weight value associated with the corresponding edge-type. The results in this work are naturally equally valid for this interpretation with weights. Networks with asymmetric inputs with weights are used, for example, in the modelling of animal locomotion. It is widely believed that animal locomotion is generated and controlled, in part, by a central pattern generator, which is a network of neurons in the central nervous system capable of producing rhythmic output. In \cite{GSBC98}, Golubitsky~\etal\ describe a network with asymmetric inputs which can generate the full range of phase relationships observed in the gaits of $2n$-legged animals, for all values of $n$, where connections of differing strength are represented by arrows with different markings. Synchrony-breaking is a mechanism for pattern generation in legged locomotion of animals. For the quadruped locomotion it is shown, in Buono and Golubitsky~\cite{BG01} and Stewart~\cite{S17}, that all quadruped gaits can occur as the first bifurcation from a fully synchronous equilibrium, for suitable parameters, for the eight-cell network proposed by Golubitsky and coworkers. In  \cite{IP17}, In and Palacios,  present a circuit realization of an animal (quadruped) robot controlled by a central pattern generator network of neurons, whose model and design are biologically-inspired by the work of Golubitsky and coworkers. Their hardware simulations show that the animal robot can indeed reproduce, via synchrony-breaking bifurcations, all the primary gates predicted by theory. Also based on the work developed by Golubitsky and coworkers, in \cite{RI06}, Righetti and Ijspeert construct a model of central pattern generator by means of a four-cell network of coupled oscillators with asymmetric inputs used to control crawling in a simulated humanoid robot. The work is part of a project whose purpose is to build a 54-degrees of freedom humanoid robot with the cognitive abilities of a child.

For a homogeneous network $\NN$, the set $I(\NN)$ of the subspaces that are left invariant by their adjacency matrices is a complete lattice with partial order given by inclusion and the meet and join operations given by the intersection and sum, respectively. As observed in Aguiar and Dias~\cite{AD14}, the set $V(\NN)$ of synchrony subspaces for $\NN$ is a subset of $I(\NN)$.
Moreover, as proved by Stewart~\cite{S07}, the set $V(\NN)$ forms a complete lattice taking the relation of inclusion: the meet operation is the intersection of subspaces, but apparently, there is no general form for the join operation. 
Let $\NN$ be a homogeneous network with asymmetric inputs. As in Aguiar and Dias~\cite{AD14}, if there is more than one edge-type in $\NN$ we consider the subnetworks for each edge-type. More concretely, for each edge-type $\EE_i$ in $\NN$, we consider the subnetwork $\NN_{\E_i}$ of $\NN$ with the same cells of $\NN$ and only the edges of type $\EE_i$. As proved in \cite{AD14}, the lattice of synchrony subspaces for $\NN$ is given by the intersection of the lattices of synchrony subspaces for the subnetworks  $\NN_{\E_i}$. We remark that these edge-type subnetworks $\NN_{\E_i}$ can be disconnected. If that is the case, they are given by the union of (connected) 1-input regular coupled cell networks with the same edge-type. Following the results in Aguiar and Ruan~\cite{AR12} for the join of networks, we show how to relate the lattice of synchrony subspaces for the disjoint union of two coupled cell networks with the same cell and edge-types from the lattice of synchrony subspaces for those networks. It remains then to describe the synchrony subspaces for the particular case of {\em 1-input regular coupled cell networks} - there is only one edge-type and each cell receives exactly one input.

Since the set $V(\NN)$ of synchrony subspaces for a network $\NN$ is a subset of the set $I(\NN)$ of the subspaces that are left invariant by their adjacency matrices, the natural join operation for the lattice $V(\NN)$ would be the sum but, as noted in \cite{S07}, not always the sum of two synchrony subspaces is a synchrony subspace. Thus, in general, the lattice $V(\NN)$ is not a sublattice of $I(\NN)$. Moreover, in general, it is not possible to define the join-irreducible set for the lattice of synchrony subspaces of a network and obtain the lattice through that join-dense set. 
The situation differs when one considers networks with asymmetric inputs. For this particular type of networks, as shown in Aguiar~\etal~\cite{AADF11}, the set of synchrony subspaces is closed under the sum operation. That is, the join operation for the lattice $V(\NN)$ of synchrony subspaces of networks $\NN$ with asymmetric inputs is the sum operation, and so, for this type of networks, $V(\NN)$ is a sublattice of the lattice $I(\NN)$.

In \cite{AD14}, Aguiar and Dias describe how to find a sum-dense set for the lattice of synchrony subspaces of a regular network, based on the eigenvectors and generalized eigenvectors of the associated adjacency matrix. By a {\em sum-dense set} ${\it m}$ for the lattice of synchrony subspaces it is meant a set of synchrony subspaces such that, although the sum of synchrony spaces in a subset  of ${\it m}$ may not be a synchrony space, every synchrony subspace in the lattice is given by the sum of the synchrony subspaces in a subset of ${\it m}$.
Here, we improve the results found in Aguiar and Dias~\cite{AD14} for the particular case of 1-input regular coupled cell networks.
Given the fact that the sum of synchrony subspaces is the join operation for the lattice of synchrony subspaces of a 1-input regular coupled cell network, instead of a sum-dense set, we are able to get the set of join-irreducible elements for the lattice of synchrony subspaces for this type of networks. The {\em set of join-irreducible elements} is given by all the synchrony subspaces that are not the join (sum) of any other two synchrony subspaces in the lattice. This set is {\em join-dense} for the lattice of synchrony subspaces, that is, the join (sum) of any subset of  join-irreducible synchrony subspaces is a synchrony subspace and every synchrony subspace is the join of a subset subset of  join-irreducible synchrony subspaces.
Taking into account the particular topology of 1-input regular coupled cell networks, we are able to characterize the eigenvectors and Jordan chains of the adjacency matrix $A$ of a 1-input regular coupled cell network. Since each join-irreducible synchrony subspace is generated by a basis of eigenvectors or generalized eigenvectors, we are then able to describe and characterize specifically  the set of join-irreducible elements for the lattice of synchrony subspaces of a 1-input regular network.  
To each synchrony subspace for a 1-input regular coupled cell network we can associate a balanced colouring: if we colour the cells of the network such that cells that are synchronized have the same colour then cells with a same colour receive their input connection from cells of a same colour.
We describe the possible patterns of balanced colourings associated to the join-irreducible elements and conclude about the possible patterns of balanced colourings associated to the other synchrony subspaces in the lattice of a 1-input regular coupled cell network.

The paper is organized as follows: Section~\ref{sec:Prelim} introduces concepts related with coupled cell networks, in particular, with homogeneous networks with asymmetric inputs. It also resumes definitions and results on coupled cell systems, and more specifically related with synchrony subspaces. At the end of the section we present basic definitions and results on complete lattices and on the lattice of synchrony subspaces for homogeneous networks.  Section~\ref{sec:lattice_union} contains our results on the lattice of synchrony subspaces for the union of identical-edge networks with the same cell-type and the same edge-type. More concretely, given a network $\NN = \NN_1 +  \NN_2$ that is the disjoint union of two regular networks $\NN_1$ and $\NN_2$ with the same cell and edge-type and such that their sets of cells have empty intersection, we describe the synchrony subspaces for $\NN$ in terms of the synchrony subspaces for $\NN_1$ and $\NN_2$. We get some useful remarks for the particular case of 1-input regular networks.
Section~\ref{sec:lattice_1_regular} includes our results about the lattice of synchrony subspaces for 1-input regular coupled cell networks. We identify the eigenvalues and associated eigenvectors for the adjacency matrix of a 1-input regular coupled cell network. This allows us to describe the join-irreducible elements in the lattice of synchrony subspaces for a 1-input regular coupled cell network. The set of the join-irreducible synchrony subspaces is join-dense for the lattice. 
We end the section with a description of the possible patterns of balanced colourings for the synchrony subspaces in the lattice. 
Finally, we end with some conclusions in Section~\ref{sec:conclusions}.

\section{Preliminary definitions and results} \label{sec:Prelim}

In this section we review and present some definitions related with homogeneous coupled cell networks with asymmetric inputs, coupled cell systems and synchrony subspaces that will be used throughout the text. 
More details on coupled cell networks and systems and synchrony subspaces can be found in Stewart~\etal~\cite{SGP03}, Golubitsky~\etal~\cite{GST05}, Golubitsky and Stewart\cite{GS05}, and references therein.  As the set of synchrony subspaces for a network is a lattice, we end with some basic definitions about complete lattices and with a result in Aguiar and Dias~\cite{AD14} about the lattice of synchrony subspaces for homogeneous networks. Details on complete lattices can be found, for example, in Davey and Priestley~\cite{DP90}. 

\subsection*{Homogeneous coupled cell networks with asymmetric inputs}

\begin{Def} \normalfont
A {\em coupled cell network} $\NN$ consists of a finite nonempty set $\CC$ of {\it cells} and a finite nonempty set $\EE= \{ (c,d):\ c,d \in \CC\}$ of {\em edges}, where each pair $(c,d)$ represents an edge from cell $d$ to cell $c$.  Moreover, it consists of a cell equivalence relation $\sim_{C}$ on $\CC$ and an edge equivalence relation $\sim_{E}$ on $\EE$ such that the {\it consistency condition} is satisfied: if $(c_1,d_1) \sim_{E} (c_2,d_2)$, then $c_1 \sim_{C} c_2$ and $d_1 \sim_{C} d_2$.
We write $\NN = (\CC,\EE,\sim_C, \sim_E)$. 
\END\end{Def} 

For an edge $(c,d)\in \EE$, the cells $c$ and $d$ are called, respectively, the {\it head} and  {\it tail} cell. The {\it input set} of cell $c$, denoted by $I(c)$, is given by the tail cells of all the edges with head $c$. 

A coupled cell network can be represented by a directed unweighted graph, where the cells are placed at vertices (nodes), the edges are depicted by directed arrows and the cell and edge equivalence relations are indicated, respectively, by different types of vertices and different types of edges in the graph.

\begin{Def} \normalfont  
Given a coupled cell network with set of cells $\CC$, we say there is a {\em directed path} connecting a sequence of cells $(c_0,c_1, \ldots ,c_{k-1},c_{k})$ of $\CC$, if there is an edge from $c_{j-1}$ to $c_{j}$, for $j \in \{1,...,k\}$.  If, for every $j \in \{1,...,k\}$, there is an edge from $c_{j-1}$ to $c_{j}$ or from $c_{j}$ to $c_{j-1}$, we say that there is an {\em undirected path} connecting the sequence of cells $(c_0,c_1, \ldots ,c_{k-1},c_{k})$.
A coupled cell network is {\em connected} if there is an undirected path between any two cells.
\END
\end{Def}

Unless otherwise stated, through the text we assume that a coupled cell network is connected.

\begin{Def} \normalfont \label{defhomognet} \label{defregnet}
A coupled cell network is said {\it homogeneous} if the cells are all identical and receive the same number of input edges per edge-type. A {\it regular} network is a homogeneous network with only one edge-type. For a homogeneous (regular) network, the total number of input edges per cell is the same for all cells and is called the {\em valency} of the network. 
\END\end{Def}

The definition of coupled cell network allows a cell to receive symmetric inputs, that is, more than one (unweighted) input edge of the same type. If that is not possible we say that the cells have asymmetric inputs.

\begin{Def} \normalfont
We say that a coupled cell network is a  {\em coupled cell network with asymmetric inputs} if each cell receives at most one input edge of each type. 
\END\end{Def} 

We consider homogeneous coupled cell networks with asymmetric inputs, which means that each cell receives exactly one edge of each type.

\begin{ex}  \normalfont 
The 7-cell network in Figure~\ref{fig:7cnetwork} is a homogeneous network with asymmetric inputs and has two edge-types. 
\END
\end{ex}

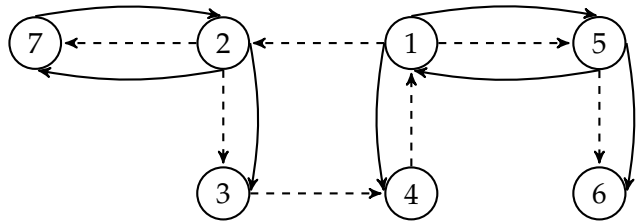
\begin{figure}[ht!]
\begin{center}
\vspace{-4mm}
\hspace{-4mm}
	{\small \begin{tikzpicture}	[->,>=stealth',shorten >=1pt,auto, node distance=1.5cm, thick,node/.style={circle,draw}]
	                 \node[node]	(1) at (-75mm, 4cm)  [fill=white] {$7$};
			\node[node]	(2) at (-5cm, 4cm)  [fill=white] {$2$};
			\node[node]	(3) at (-5cm, 2cm)  [fill=white] {$3$};
			\node[node]	(4) at (-25mm, 4cm)  [fill=white] {$1$};			
			\node[node]	(5) at (0cm, 4cm)  [fill=white] {$5$};			
			\node[node]	(6) at (0cm, 2cm)  [fill=white] {$6$};
			\node[node]	(7) at (-25mm, 2cm)  [fill=white] {$4$};
			
                                                 \path
				(2.south) edge [bend left=10]  node {} (1.south)
				(1.north) edge  [bend right=350]  node {} (2.north)
				(2.east) edge [bend right=350] node {} (3.east)
				(5.south) edge [bend left=10] node {} (4.south)
				(4.north) edge [bend right=350]  node {} (5.north)
				(4.west) edge [bend left=350]  node {} (7.west)
				(5.east) edge [bend right=350]  node {} (6.east)
				;
				\path
				 [dashed] 
				 (2) edge node {} (1) 
				(2) edge node {} (3)
				(3) edge node {} (7)
				(7) edge node {} (4)
				(4) edge node {} (2)
				(4) edge node {} (5)
				(5) edge node {} (6)
				;
		\end{tikzpicture}}
		\caption{A homogeneous coupled cell network with asymmetric inputs.}
		\label{fig:7cnetwork}
		\end{center}
\end{figure}

The coupling structure of a homogeneous network with set of cells $\CC=\{c_1, \ldots, c_n\}$ and $r$ edge-types $\EE_l$, $l=1,\ldots, r$, is given by $r$ adjacency matrices $A_l:=(a_{ij}^{(l)})$, of order $n \times n$, with rows and columns indexed by the cells in $\CC$, such that the entry $a_{ij}^{(l)}$ corresponds to the number of input edges of type $\EE_l$ from cell $c_j$ to cell $c_i$.

\begin{Def}\normalfont \label{def:intsym}
Let $\NN$ be a homogeneous network with set of cells $\CC$ and $S\subseteq \CC$. An {\it interior symmetry} of $\NN$ on $S$ is a permutation $\sigma$ on $\CC$ such that $\sigma$ fixes every element in $\CC\setminus S$, and, for each $c\in S$, $d\in \CC$, there is a bijection between edges $\left(\sigma(c),\sigma(d)\right)$ and $\left(c,d\right)$, which preserves the edges type. 
\END
\end{Def}

Let $\NN$ be an identical-cell network  with adjacency matrices  $A_l$, $l=1,\ldots, r$. Then, a permutation $\sigma$ is an interior symmetry of $\NN$ on $S$, if and only if 
\begin{equation}\label{eq:int_symm}
a_{cd}^{(l)}=a^{(l)}_{\sigma(c)\sigma(d)}, \quad \forall c \in S,\, d \in \CC, \, l=1, \ldots, r.
\end{equation}

\begin{ex}  \normalfont 
The 3-cell homogeneous network in Figure~\ref{fig:3cnetwork} has interior symmetry on $S=\{1, 3\}$. The 7-cell network in Figure~\ref{fig:7cnetwork} has no interior symmetry.
\END
\end{ex}

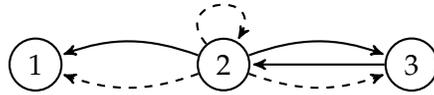
\begin{figure}[ht!]
\begin{center}
\vspace{-4mm}
\hspace{-4mm}
	{\small \begin{tikzpicture}	[->,>=stealth',shorten >=1pt,auto, node distance=1.5cm, thick,node/.style={circle,draw}]
	                 \node[node]	(1) at (-75mm, 4cm)  [fill=white] {$1$};
			\node[node]	(2) at (-5cm, 4cm)  [fill=white] {$2$};
			\node[node]	(3) at (-25mm, 4cm)  [fill=white] {$3$};			

                                                   \path
				(2) edge [out=160, in=20] node {} (1)
				(2) edge [out=20, in=160] node {} (3)
				(3) edge node {} (2)
								;
				\path
				 [dashed] 
				(2) edge [out=200, in=340] node {} (1)
				(2) edge [out=340, in=200] node {} (3)
				(2) edge [loop, out=130, in=60, looseness=5] node [above right, pos=.75]  {}  (2)				;
		\end{tikzpicture}}
		\caption{A homogeneous coupled cell network with interior symmetry on $S=\{1, 3\}$.}
		\label{fig:3cnetwork}
		\end{center}
\end{figure}

In the following definition, given a homogeneous network $\NN$ with $r$ edge-types, we consider $r$ subnetworks of $\NN$, one for each edge-type, with the set of cells of $\NN$ but only the edges of that type.

\begin{Def} \normalfont \label{def:subnet}
Let $\NN = \left(\CC, \EE, \sim_{\CC},\sim_{\EE}\right)$ be a homogeneous coupled cell network and $\EE_1, \ldots, \EE_r$ the $\sim_{\EE}$-equivalence classes. For $l=1, \ldots, r$, we define the {\em identical-edge subnetwork $\NN_{\E_{l}}$ } of $\NN$, for the edge-type $\EE_{l}$, as $\NN_{\E_{l}} = \left(\CC, \EE_{l} , \sim_{\CC}, \{ \EE_l\}\right)$.
\END\end{Def}

Note that, for each $l=1, \ldots, r$, the subnetwork $\NN_{\E_{l}}$ may not be connected. In that case, it is the disjoint union of connected networks. The disjoint union of regular coupled cell networks with the same cell and edge-type is defined in the same way as the disjoint union of graphs.

\begin{Def} \normalfont \label{def:unionnet}
Let $\NN_i = \left(\CC_i, \EE_i, \sim_{\CC_i} = \{\CC_i\},\sim_{\EE_i} = \{\EE_i\}\right)$, $i=1,2$, be two regular coupled cell networks with the same cell and edge-type and such that $\CC_1 \cap \CC_2 = \emptyset$ (and thus  $\EE_1 \cap \EE_2 = \emptyset$). The {\em disjoint union} $\NN = \NN_1 +  \NN_2$ of the networks $\NN_1$ and $\NN_2$ is the network $\NN = \left(\CC_1 \cup \CC_2, \EE_1 \cup \EE_2, \{\CC_1 \cup \CC_2\},\{\EE_1 \cup \EE_2\} \right)$.
\END\end{Def}

If $\NN$ is homogeneous, each subnetwork $\NN_{\E_{l}}$, $l=1, \ldots, r$,  of $\NN$ is a regular network or a union of regular networks.  If, in addition, $\NN$ has asymmetric inputs then, for each edge-type $\EE$, the subnetwork $\NN_{\E_l}$ is a {\em $1$-input regular coupled cell network} or a union of $1$-input regular coupled cell networks (with the same cell and edge-types).

See Ganbat~\cite{G14} for a complete classification of codimension-one synchrony-breaking steady-state bifurcations in $1$-input regular coupled cell networks.

\begin{ex}  \normalfont 
Consider the homogeneous network with asymmetric inputs $\NN$ in Figure~\ref{fig:7cnetwork}. Let $\EE_1$ be type of the solid edges and $\EE_2$ be type of the dashed edges. The subnetwork $\NN_{\E_1}$ of $\NN$, depicted in Figure~\ref{fig:subN1}, is the union of two $1$-input regular coupled cell networks. The subnetwork $\NN_{\E_2}$  is a $1$-input regular coupled cell network, see Figure~\ref{fig:subN2}.
\END
\end{ex}

\begin{figure}[ht!]
\begin{center}
\vspace{-4mm}
\hspace{-4mm}
	{\small \begin{tikzpicture}[->,>=stealth',shorten >=1pt,auto, node distance=1.5cm, thick,node/.style={circle,draw}]
	      \node[node]	(1) at (-75mm, 4cm)  [fill=white] {$7$};
			\node[node]	(2) at (-5cm, 4cm)  [fill=white] {$2$};
			\node[node]	(3) at (-5cm, 2cm)  [fill=white] {$3$};
			\node[node]	(4) at (-25mm, 4cm)  [fill=white] {$1$};			
			\node[node]	(5) at (0cm, 4cm)  [fill=white] {$5$};			
			\node[node]	(6) at (0cm, 2cm)  [fill=white] {$6$};
			\node[node]	(7) at (-25mm, 2cm)  [fill=white] {$4$};
	
                                                   \path
				(2.south) edge [bend left=10]  node {} (1.south)
				(1.north) edge  [bend right=350]  node {} (2.north)
				(2.east) edge [bend right=350] node {} (3.east)
				(5.south) edge [bend left=10] node {} (4.south)
				(4.north) edge [bend right=350]  node {} (5.north)
				(4.west) edge [bend left=350]  node {} (7.west)
				(5.east) edge [bend right=350]  node {} (6.east)
				;
		\end{tikzpicture}}
		\caption{Subnetwork $\NN_{\E_1}$ of the network $\NN$ in Figure~\ref{fig:7cnetwork}, with $\EE_1$ the solid edge-type.}
		\label{fig:subN1}
		\end{center}
\end{figure}

\begin{figure}[ht!]
\begin{center}
\vspace{-4mm}
\hspace{-4mm}
	{\small \begin{tikzpicture}	[->,>=stealth',shorten >=1pt,auto, node distance=1.5cm, thick,node/.style={circle,draw}]
	                 \node[node]	(1) at (-75mm, 4cm)  [fill=white] {$7$};
			\node[node]	(2) at (-5cm, 4cm)  [fill=white] {$2$};
			\node[node]	(3) at (-5cm, 2cm)  [fill=white] {$3$};
			\node[node]	(4) at (-25mm, 4cm)  [fill=white] {$1$};			
			\node[node]	(5) at (0cm, 4cm)  [fill=white] {$5$};			
			\node[node]	(6) at (0cm, 2cm)  [fill=white] {$6$};
			\node[node]	(7) at (-25mm, 2cm)  [fill=white] {$4$};

				\path
				 [dashed] 
				 (2) edge node {} (1) 
				(2) edge node {} (3)
				(3) edge node {} (7)
				(7) edge node {} (4)
				(4) edge node {} (2)
				(4) edge node {} (5)
				(5) edge node {} (6)
				;
		\end{tikzpicture}}
		\caption{Subnetwork $\NN_{\E_2}$ of the network $\NN$ in Figure~\ref{fig:7cnetwork}, with $\EE_2$ the dashed edge-type.}
		\label{fig:subN2}
		\end{center}
\end{figure}

\begin{Def} \normalfont  
Given a coupled cell network with set of cells $\CC$, a directed path $(c_0,c_1, \ldots ,c_{k-1},c_{k})$, with $c_i \in \CC$, $i=0,\ldots,k$, such that $c_0 = c_k$ is called a {\em ring} (or loop). In particular, if $k=1$, a ring is a {\em self-loop}.
\END
\end{Def}

Ring networks have been studied, for example, in Ganbat~\cite{G14} and Moreira ~\cite{M14}.

\begin{Def} \normalfont  
A network for which there is a cell $c$ such that, for any other cell $d$, there is exactly one directed path from $c$ to $d$ is called a {\em directed rooted tree}. The cell $c$ is called the {\em root}. The cells $d$ with no outgoing connection are called the {\em leafs}. For each leaf $d$, the directed path from the root $c$ to $d$ is called a {\em tail}. A connected subgraph of a tree is a {\em subtree}.
\END
\end{Def}

\begin{rmk} \normalfont \label{rmk:ring+trees} 
A  {\em $1$-input regular coupled cell network} with $n$ cells either is a ring (in particular, a self-loop) or the coalescence of a ring with the disjoint union of a finite number $s$ of directed rooted trees $\TT_i$, for $i =1, \dots, s$, such that the root $r_i$ of each rooted tree $\TT_i$ merges with a different cell in the ring. Remember that a {\em coalescence} of two graphs $G_1$ and $G_2$ is a graph obtained from the disjoint union of $G_1$ and $G_2$ by identifying a vertex of $G_1$ with a vertex of $G_2$, that is, by merging one vertex from each graph into a single vertex. 
\END
\end{rmk}

\begin{ex}  \normalfont 
The $1$-input regular coupled cell network in Figure~\ref{fig:subN2} is  the coalescence of the ring $(1,2,3,4,1)$ with two directed rooted trees, one with root at cell $1$ and the other with root at cell $2$. The $1$-input regular coupled cell network in Figure~\ref{fig:1-inputNet} is the coalescence of the ring $(1,2,3,1)$ with two directed rooted trees, one with root at cell $2$ and the other with root at cell $3$.
\END
\end{ex}

{\tiny
\begin{figure}[ht!]
\begin{center}
\vspace{-4mm}
\hspace{-4mm}
	{\small \begin{tikzpicture}[->,>=stealth',shorten >=1pt,auto, node distance=1.5cm, thick,node/.style={circle,draw}]
	                 \node[node]  	(1) at (0cm, 0cm)  [fill=green] {$1$};
			\node[node]	(2) at (2cm, 1cm)  [fill=green] {$2$};
			\node[node]	(3) at (2cm, -1cm)  [fill=green] {$3$};
			\node[node]	(4) at (4cm, 2cm)  [fill=orange] {$4$};	
			\node[node]	(8) at (4cm, 1cm)  [fill=green] {$8$};	
			\node[node]	(9) at (6cm, 1cm)  [fill=green] {$9$};						
			\node[node]	(10) at (4cm, -1cm) [fill=green] [circle,draw, label=center:10]  {\phantom{0}};
			\node[node]	(5) at (6cm, 3cm)  [fill=yellow] {$5$};
			\node[node]	(6) at (6cm, 2cm)  [fill=yellow] {$6$};
                 	\node[node]	(11) at (6cm, -1cm) [fill=orange] [circle,draw, label=center:11]  {\phantom{0}};	
                 	\node[node]	(7) at (8cm, 2cm)  [fill=magenta] {$7$};
                 	\node[node]	(12) at (8cm, 0cm) [fill=brown] [circle,draw, label=center:12]  {\phantom{0}};
	                \node[node]	(13) at (8cm, -1cm) [fill=yellow] [circle,draw, label=center:13]  {\phantom{0}};
	                 \node[node]	(14) at (10cm, -1cm) [fill=magenta] [circle,draw, label=center:14]  {\phantom{0}};
	                  \node[node]	(15) at (12cm, -1cm) [fill=blue!40] [circle,draw, label=center:15]  {\phantom{0}};
                                                   \path
	                 (1) edge node {} (2)
	                 (2) edge node {} (3)
	                 (2) edge node {} (8)
	                 (8) edge node {} (9)
	                 (3) edge node {} (1) 
	                 (2) edge node {} (4)   
	                 (4) edge node {} (5) 
	                 (6) edge node {} (7) 
	                 (4) edge node {} (6) 
	                 (3) edge node {} (10) 
	                 (10) edge node {} (11) 
	                 (11) edge node {} (12) 
	                 (11) edge node {} (13) 
	                 (13) edge node {} (14) 
	                 (14) edge node {} (15) 
				;
		\end{tikzpicture}}
		\caption{A $1$-input regular coupled cell network that is formed by the ring $(1,2,3,1)$ and two directed rooted trees, one with root at cell $2$ and the other with root at cell $3$.}
		\label{fig:1-inputNet}
		\end{center}
\end{figure}
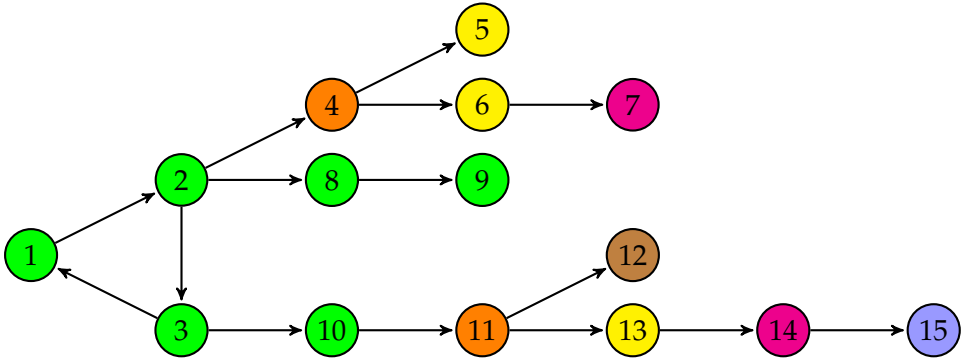
}

We remark that a $1$-input regular coupled cell network, with $n >1$ cells, whose ring consists of a cell with a self-loop is a particular case of an auto-regulation feed-forward neural network, see Aguiar \etal~\cite{ADF17}.

Following Aguiar~\etal~\cite{ADS17}, we define next the depth of a $1$-input regular network as the maximal distance to the ring of any cell out of the ring.  We start by defining the depth of a directed rooted tree.

\begin{Def} \normalfont  
Let $\TT$ be a directed rooted tree with root $r$ and $\LL$ the set of leaf cells in $\TT$. We define the {\em depth of $\TT$} as zero if $\LL = \emptyset$, otherwise 
$$
depth(\TT) := max \{|(r,l)| : l \in \LL \},
$$
where $|(r, l)|$ is the number of edges in the tail in $\TT$ from $r$ to $l$.

Let $\NN$ be a  $1$-input regular coupled cell network. If $\NN$ is a ring then we define the {\em depth of $\NN$} as zero, otherwise 
$$
depth(\NN) := max\{ depth(\TT_i):  i \in \{1, \dots, s\}\},
$$
with $\TT_i$, for $i =1, \dots, s$, the $s$ directed rooted trees of $\NN$, as in Remark~\ref{rmk:ring+trees}.
\END
\end{Def}

\begin{ex}  \normalfont 
The depth of the $1$-input regular network in Figure~\ref{fig:subN2} is $2 = max\{ 1,2\}$ and the depth of the $1$-input regular network in Figure~\ref{fig:1-inputNet} is $5 = max\{ 2,3,5\}$.
\END
\end{ex}

%%%
\subsection*{Coupled cell systems}
%%%

Let $\CC = \{ 1, \ldots, n\}$ be the set of cells of a coupled cell network.
To each cell $c \in \CC$ we associate a {\it cell phase space} $P_c$ which is assumed to be a finite-dimensional real vector space, say $\RR^k$ for some $k >0$. When cells are equivalent, as in the case of homogeneous networks, the corresponding phase spaces are identified canonically. 
 The {\it total phase space} $P = \prod_{c \in {\cal C}} P_c$ 
is the direct product of the cell phase spaces
and we employ the coordinate system $x=(x_c)_{c \in \cal C}$ on $P$. 

To each coupled cell network we associate a class of admissible continuous coupled cell systems. 
Given a network $\NN$ and a fixed choice of the total phase space $P$, the class of the ordinary differential equations, $\dot X = F(X),\ X \in P$, compatible with the structure of the network $\NN$ are such that the $j$th coordinate of the vector field, defining the equation associated with cell $j$, has the form
\[
\dot{x}_j = f_j \left( x_j; x_{i_1}, \ldots, x_{i_m}\right)
\]
where the first argument $x_j$ in $f_j$ represents  the internal dynamics of the cell $j$, which has $m$ input-edges, and each of the remaining variables $x_{i_p}$, $p=1,\ldots,m$, represents an edge from 
cell $i_p$ to cell $j$. Thus $x_j \in P_j, x_{i_p} \in P_{i_p}, \ p=1, \ldots,m$  and we assume $f_j:\ P_j \times P_{i_1} \times \cdots \times P_{i_m} \to P_{j}$ is smooth. For homogeneous networks, since the cells are all identical, the internal dynamics of the cells is the same for all cells, that is, $f_j = f$, for all $j=1,\ldots,n$. 
The vector fields $F$ are said {\em admissible} by $\NN$. 

\begin{ex}\label{ex:admissvf}
\normalfont
Consider the asymmetric homogeneous network $\NN$ in Figure~\ref{fig:7cnetwork}. The coupled cell systems associated to $\NN$ satisfy
\[
\begin{array}{l}
\dot x_1 =f(x_1;x_5,x_4) \\
\dot x_2 =f(x_2;x_7,x_1) \\
\dot x_3 =f(x_3;x_2,x_2) \\
\dot x_4 =f(x_4;x_1,x_3) \\
\dot x_5 =f(x_5;x_1,x_1) \\
\dot x_6 =f(x_6;x_5,x_5) \\
\dot x_7 =f(x_7;x_2,x_2) \\
\end{array},
\]
where $f:\ \left(\RR^k\right)^7 \to \RR^k$ is smooth. For each equation $j =1,\ldots,7$, the first argument of $f$ is the internal dynamics of cell $j$, the second and third arguments are the variables corresponding to the inputs of solid and dashed edge-types, respectively, for the cell $j$.
\END
\end{ex}

%%%
\subsection*{Synchrony subspaces}
%%%

The structure of a network imposes the existence of certain flow-invariant subspaces for any coupled cell system compatible with that structure. These are called synchrony subspaces.

\begin{Def} [\cite{GST05} Definition 4.2] \normalfont \label{def:espacosincronia}
Consider a network $\NN$ with total phase space $P$.  A {\em polydiagonal subspace} is a subspace of $P$ characterized by a set of equalities of cell coordinates.
A  {\em synchrony subspace} for the network $\NN$ is a polydiagonal subspace of $P$ which is flow-invariant under all the vector fields on $P$ that are admissible by $\NN$. 
\END 
\end{Def}

\begin{ex}\label{ex:synsub}
\normalfont
Consider again the asymmetric homogeneous network $\NN$ in Figure~\ref{fig:7cnetwork} and the general form of the coupled cell systems on $P= \left(\RR^k\right)^7$ admissible by $\NN$ given in Example~\ref{ex:admissvf}. The polydiagonal subspace $\{ x \in P:\ x_1 = x_3\}$ is a synchrony subspace for $\NN$. Note that, if we consider an initial condition on $P$ such that $x_3 = x_7$ then the equations for $\dot x_3$ and $\dot x_7$ coincide and the trajectory satisfies the equality $x_3 = x_7$ for all time. The polydiagonal subspace $\{ x \in  P:\ x_1 = x_3 = x_7\}$ is not a synchrony subspace for $\NN$ but the polydiagonal subspace $\{ x \in  P:\ x_1 = x_3 = x_7,\ x_2=x_4=x_5\}$ is a synchrony subspace for $\NN$.
\END
\end{ex}

The concept of synchrony subspace for a network is closely related to that of balanced equivalence relation on the network set of cells. 

\begin{Def} [\cite{GST05} Definition 4.1] \normalfont \label{def:balanced}  
An equivalence relation $\bowtie$ on the 
 set of cells $\CC$ of a network $\NN$ is 
{\em balanced} if for every $c,d \in \CC$ with $c \bowtie d$, 
there exists an isomorphism between the input sets, 
$I(c)$ and $I(d)$,  of $c$ and 
$d$, respectively, say $\beta:\ I(c) \to I(d)$,  
preserving the arrow equivalence 
relation and such that for all $i \in I(c)$, the tail cells of $i$ and 
$\beta(i)$ are in the same $\bowtie$ class.
 \END 
\end{Def}

Given an equivalence relation $\bowtie$ on the set of cells $\CC$ a network $\NN$  and 
a choice of the total phase space $P$, define 
the {\em polydiagonal} subspace
\[
\Delta_{\bowtie} = \left\{ {\bf x} \in P:\ x_c = x_d \mbox{ whenever } 
c \bowtie d,\quad \forall c,d \in \CC\right\}\, .
\]

We have then the result of \cite{GST05} that relates the synchrony spaces of a network with the balanced equivalence relations on the set of cells of the network. 

\begin{thm}[\cite{GST05} Theorem 4.3] \label{thm:main}
Consider  a network $\NN$, an equivalence relation $\bowtie$ on the network 
set of cells $\CC$ and a choice $P$ of the total phase space. We have that, 
$\Delta_{\bowtie}$ is a synchrony subspace if and only if 
$\bowtie$ is balanced.
\end{thm}

Following Golubitsky~\etal~\cite{GST05}, we can visualize graphically a balanced equivalence relation, and so a synchrony subspace, for a network by a {\em balanced colouring} of the cells of the network. More concretely, given an equivalence relation on the network set of cells, if we colour the cells such that cells in the same class have the same colour then the equivalence relation is balanced if and only if, taking any two colours $r_1$ and $r_2$, for each edge-type, all cells with colour $r_1$ receive the same number of input edges of that type from the cells of colour $r_2$.
For the specific case of networks with asymmetric inputs, the colouring is balanced if and only if, for each edge-type, cells with a same colour receive their input connection of that edge-type from cells of a same colour. 

Theorem 5.2 of Golubitsky~\etal~\cite{GST05} shows that, associated to every synchrony subspace of a network $\NN$ there is always a network $Q$, called the {\em quotient network}, such that the restrictions of the admissible vector fields for $\NN$ to the synchrony subspace are the admissible vector fields of the quotient network $Q$. Given a synchrony subspace of $\NN$ and the corresponding balanced colouring, the quotient network $Q$ is obtained by the identification of the cells with the same colour (the cells that are synchronized) and projection of the edges, preserving the cell types and the edge types. For a more formal definition of quotient network, see Golubitsky~\etal~\cite{GST05}.

\begin{ex}\label{ex:quonet}
\normalfont
Consider the network $\NN$ in Figure~\ref{fig:7cnetwork} and the synchrony subspace $\Delta_{\bowtie} = \{ x \in  P:\ x_1 = x_3 = x_7,\ x_2=x_4=x_5\}$ for $\NN$. The corresponding balanced equivalence relation is $\bowtie = \{ \{1,3,7\},\ \{2,4,5\},\ \{6\}\}$. 
Colour the three equivalence classes by green, orange and yellow, respectively. As noted above, this is a balanced colouring. In fact, each green cell receives one solid edge from an orange cell and one dashed edge from an orange cell. The same for each yellow cell. Each orange cell receives one solid edge from a green cell and one dashed edge from a green cell.  See Figure~\ref{fig:7cnetwork_col} (left). 

The quotient network $Q$, presented in Figure~\ref{fig:7cnetwork_col} (right), is obtained as we explain next. Each colour (equivalence class) corresponds to a cell in $Q$: the green cells $1,3,7$ of $\NN$ are identified as the green cell $1$ in $Q$, the orange cells $2,4,5$ of $\NN$ are identified as the orange cell $2$ in $Q$, and the yellow cell $6$ of $\NN$ projects into the yellow cell $6$ of $Q$. As for the edges, since each green cell in $\NN$ receives one solid edge and one dashed edge from an orange cell, the green cell $1$ in $Q$ receives one solid edge and one dashed edge from the orange cell $2$ in $Q$. Analogously for the yellow cell. Since each orange cell in $\NN$ receives one solid edge and one dashed edge from a green cell, the orange cell $2$ in $Q$ receives one solid edge and one dashed edge from the green cell $1$ in $Q$. 

\END
\end{ex}

\begin{figure}[ht!]
\begin{center}
\vspace{-4mm}
\hspace{-4mm}

	{\small \begin{tikzpicture}	[->,>=stealth',shorten >=1pt,auto, node distance=1.5cm, thick,node/.style={circle,draw}]
	                 \node[node]	(1) at (-75mm, 4cm)  [fill=green] {$7$};
			\node[node]	(2) at (-5cm, 4cm)  [fill=orange] {$2$};
			\node[node]	(3) at (-5cm, 2cm)  [fill=green] {$3$};
			\node[node]	(4) at (-25mm, 4cm)  [fill=green] {$1$};			
			\node[node]	(5) at (0cm, 4cm)  [fill=orange] {$5$};			
			\node[node]	(6) at (0cm, 2cm)  [fill=yellow] {$6$};
			\node[node]	(7) at (-25mm, 2cm)  [fill=orange] {$4$};
			
			 \node[node]	(q1) at (25mm, 3cm)  [fill=green] {$1$};
			\node[node]	(q2) at (5cm, 3cm)  [fill=orange] {$2$};
			\node[node]	(q6) at (75mm, 3cm)  [fill=yellow] {$6$};

                                                   \path
				(2.south) edge [bend left=10]  node {} (1.south)
				(1.north) edge  [bend right=350]  node {} (2.north)
				(2.east) edge [bend right=350] node {} (3.east)
				(5.south) edge [bend left=10] node {} (4.south)
				(4.north) edge [bend right=350]  node {} (5.north)
				(4.west) edge [bend left=350]  node {} (7.west)
				(5.east) edge [bend right=350]  node {} (6.east)
				  (q1.north) edge  [bend right=350]  node {} (q2.north)
                                   (q2.north) edge  [bend right=350]  node {} (q6.north)
                                   (q2) edge [out=160, in=20] node {} (q1)
                                 
				;
				\path
				 [dashed] 
				 (2) edge node {} (1) 
				(2) edge node {} (3)
				(3) edge node {} (7)
				(7) edge node {} (4)
				(4) edge node {} (2)
				(4) edge node {} (5)
				(5) edge node {} (6)
				 (q1.south) edge [bend left=350] node {} (q2.south)
				 (q2.south) edge [bend left=350] node {} (q6.south)
                                 (q2) edge [out=200, in=340] node {} (q1)
				;
		\end{tikzpicture}}
		\caption{A balanced colouring for the network in Figure~\ref{fig:7cnetwork} (left) and the corresponding quotient network (right).}
		\label{fig:7cnetwork_col}
		\end{center}
\end{figure}
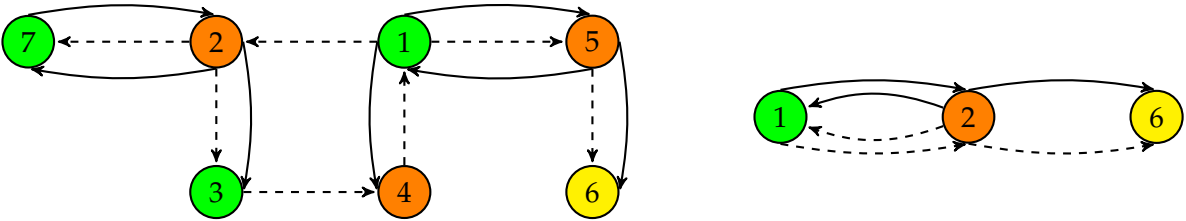

Let $\Delta_{\bowtie}$ be a synchrony subspace for a network $\NN$ and consider the corresponding quotient network $Q$. Let $\Delta_{\candy}$ be another synchrony subspace for $\NN$ such that $\Delta_{\candy} \subset \Delta_{\bowtie}$. We define the {\em restriction}, associated to $\Delta_{\bowtie}$, of $\Delta_{\candy}$ to $Q$, which we denote by $R\left(\Delta_{\candy}\right)$, as the polydiagonal subspace of the phase space of $Q$ defined by the equality conditions in the definition of  $\Delta_{\candy}$, that do not belong to the set of equality conditions that define $\Delta_{\bowtie}$, replacing the coordinate of each cell by the coordinate of its equivalence class by the balanced colouring. 
Let $\Delta_{\bowtie_Q}$ be a synchrony subspace for  the quotient network $Q$. We define the {\em lift}, associated to $\Delta_{\bowtie}$, of $\Delta_{\bowtie_Q}$ to $\NN$, which we denote by $L\left(\Delta_{\bowtie_Q}\right)$, as the polydiagonal subspace of the phase space of $\NN$ defined by the equality conditions in the definition of $\Delta_{\bowtie}$ together with the equality conditions in the definition of $\Delta_{\bowtie_Q}$ where the coordinate of each equivalence class by the balanced colouring is replaced by the coordinate of any cell in the class.
Note that the restriction and lift are inverse operations, that is, we have $L\left( R\left( \Delta_{\bowtie} \right) \right) = \Delta_{\bowtie}$ and $R\left( L\left( \Delta_{\bowtie_Q} \right) \right) = \Delta_{\bowtie_Q}$.

The following proposition corresponds to Proposition 2.9 in Aguiar and Ruan~\cite{AR12} rewritten in an equivalent way and in terms of synchrony subspaces instead of balanced equivalence relations.

\begin{proposition} [\cite{AR12} Proposition 2.9] \label{prop:rest_lift}
Let $\Delta_{\bowtie}$ be a synchrony subspace for a network $\NN$, $Q$ the associated quotient network and consider the notation above. We have:
\begin{itemize}
\item [(a)] If $\Delta_{\candy}$ is a synchrony subspace for $\NN$ such that $\Delta_{\candy} \subset \Delta_{\bowtie}$ then the polydiagonal $R\left(\Delta_{\candy}\right)$ is a synchrony subspace for $Q$.
\item [(b)] If $\Delta_{\bowtie_Q}$ is a synchrony subspace for $Q$  then the polydiagonal $L\left(\Delta_{\bowtie_Q}\right)$ is a synchrony subspace for $\NN$.
\end{itemize}
\end{proposition}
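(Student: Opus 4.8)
The plan is to reduce both parts to the fundamental quotient theorem (Theorem 5.2 of Golubitsky~\etal~\cite{GST05}), which identifies the admissible vector fields of $Q$ with the restrictions to $\Delta_{\bowtie}$ of the admissible vector fields of $\NN$. First I would fix the canonical linear isomorphism $\pi \colon \Delta_{\bowtie} \to P_Q$ between $\Delta_{\bowtie}$ and the total phase space $P_Q$ of $Q$, induced by the balanced colouring: $\pi$ reads off, for each colour class, the common value of the synchronized cell coordinates and records it as the coordinate of the corresponding cell of $Q$. From the definitions of restriction and lift it is then immediate that $\pi(\Delta_{\candy}) = R(\Delta_{\candy})$ for any synchrony subspace $\Delta_{\candy} \subset \Delta_{\bowtie}$ of $\NN$, and dually $\pi^{-1}(\Delta_{\bowtie_Q}) = L(\Delta_{\bowtie_Q})$ for any synchrony subspace $\Delta_{\bowtie_Q}$ of $Q$; this is exactly the bookkeeping that the leftover equalities defining $R(\Delta_{\candy})$ are the images under $\pi$ of the equalities defining $\Delta_{\candy}$ that are not already forced by $\Delta_{\bowtie}$.

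For part (a), I would take an arbitrary admissible vector field $G$ for $Q$ and, using the quotient theorem, choose an admissible vector field $F$ for $\NN$ whose restriction to $\Delta_{\bowtie}$ is carried to $G$ by $\pi$; concretely $G \circ \pi = \pi \circ (F|_{\Delta_{\bowtie}})$, so that the flow of $G$ and the restricted flow of $F$ are conjugate through $\pi$. Since $\Delta_{\candy}$ is a synchrony subspace for $\NN$ it is invariant under $F$, and because $\Delta_{\candy} \subset \Delta_{\bowtie}$ this invariance takes place inside $\Delta_{\bowtie}$; transporting by the conjugacy $\pi$ shows that $R(\Delta_{\candy}) = \pi(\Delta_{\candy})$ is invariant under $G$. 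As $G$ ranges over all admissible vector fields of $Q$, this proves that $R(\Delta_{\candy})$ is a synchrony subspace for $Q$.

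For part (b), I would run the same conjugacy in the opposite direction. Let $F$ be an arbitrary admissible vector field for $\NN$. Since $L(\Delta_{\bowtie_Q}) \subset \Delta_{\bowtie}$ and $\Delta_{\bowtie}$ is a synchrony subspace, any trajectory of $F$ starting in $L(\Delta_{\bowtie_Q})$ remains in $\Delta_{\bowtie}$, so it suffices to study the restricted dynamics $F|_{\Delta_{\bowtie}}$; by the quotient theorem this restriction corresponds through $\pi$ to some admissible vector field $G$ for $Q$. Because $\Delta_{\bowtie_Q}$ is a synchrony subspace for $Q$ it is $G$-invariant, and $\pi(L(\Delta_{\bowtie_Q})) = \Delta_{\bowtie_Q}$, so pulling back by $\pi$ gives that $L(\Delta_{\bowtie_Q})$ is invariant under $F|_{\Delta_{\bowtie}}$, hence under $F$. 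Since $F$ was arbitrary, $L(\Delta_{\bowtie_Q})$ is a synchrony subspace for $\NN$.

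The main obstacle I anticipate is not the dynamics — once the conjugacy is in place, invariance transfers formally — but the precise verification that $\pi$ matches the combinatorially defined polydiagonals $R(\Delta_{\candy})$ and $L(\Delta_{\bowtie_Q})$ with $\pi(\Delta_{\candy})$ and $\pi^{-1}(\Delta_{\bowtie_Q})$, together with the fact that the quotient theorem really supplies the two-sided correspondence between admissible fields (surjectivity of restriction, used in (a), and well-definedness of restriction, used in (b)). Both points are guaranteed by the already-noted fact that restriction and lift are mutually inverse, combined with Theorem 5.2, so the argument is essentially a faithful translation of Proposition 2.9 of Aguiar and Ruan~\cite{AR12} into the language of synchrony subspaces.
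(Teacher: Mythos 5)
Your proof is correct; note, though, that this paper never actually proves the proposition --- it is presented as a restatement, in terms of synchrony subspaces, of Proposition 2.9 of Aguiar and Ruan~\cite{AR12}, so your argument supplies a self-contained proof where the paper deliberately defers to a citation. Your route is the natural dynamical one: fix the linear identification $\pi \colon \Delta_{\bowtie} \to P_Q$ induced by the balanced colouring, verify the bookkeeping identities $\pi\left(\Delta_{\candy}\right) = R\left(\Delta_{\candy}\right)$ and $\pi^{-1}\left(\Delta_{\bowtie_Q}\right) = L\left(\Delta_{\bowtie_Q}\right)$, and transport flow-invariance through the conjugacy provided by Theorem 5.2 of Golubitsky~\etal~\cite{GST05}. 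You also correctly isolate the two distinct uses of that theorem: part (a) needs surjectivity of restriction (every admissible vector field of $Q$ is $\pi$-related to the restriction of some admissible vector field of $\NN$), while part (b) needs only that restrictions of admissible vector fields of $\NN$ to $\Delta_{\bowtie}$ are admissible for $Q$; and in (b) you correctly pass from invariance under $F|_{\Delta_{\bowtie}}$ to invariance under $F$ via the flow-invariance of $\Delta_{\bowtie}$ itself. By contrast, the cited argument in \cite{AR12} is carried out combinatorially in the language of balanced equivalence relations (checking that the induced relation on the quotient, respectively its pullback, is again balanced), and importing it here would additionally require the dictionary between balanced relations and synchrony subspaces. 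Your version buys self-containedness within exactly the notions this paper works with --- admissible vector fields, restriction and lift, and the quotient theorem --- at the cost of invoking the full two-sided strength of that theorem rather than only its combinatorial content.
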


\begin{ex}\label{ex:restlift}
\normalfont
Consider the network $\NN$ in Figure~\ref{fig:7cnetwork} and its synchrony subspaces $\Delta_{\bowtie}= \{ x \in  P:\ x_1 = x_3 = x_7,\ x_2=x_4=x_5\}$ and $\Delta_{\candy}= \{ x \in  P:\ x_1 = x_3 = x_6 = x_7,\ x_2=x_4=x_5\}$. Consider the quotient network $Q$, in Figure~\ref{fig:7cnetwork_col}, associated to $\Delta_{\bowtie}$ and denote by $P_Q$ the phase space for $Q$. The restriction $R\left(\Delta_{\candy}\right) = \{ x \in  P_Q:\ x_1 = x_6\}$ is a synchrony subspace for $Q$. Consider the synchrony subspace $\Delta_{\bowtie_Q}= \{ x \in  P_Q:\ x_1 = x_2\}$ for $Q$. The lift $L\left(\Delta_{\bowtie_Q}\right) = \{ x \in  P:\ x_1 =x_2= x_3 =x_4= x_5 = x_7 \}$ is a synchrony subspace for $\NN$.
\END
\end{ex}

The following result, which is a consequence of Theorem 4.3 in Golubitsky~\etal~\cite{GST05} as explained in \cite{AD14}, gives a much simpler necessary and sufficient condition for a polydiagonal to be a synchrony subspace.

\begin{cor}[\cite{AD14} Corollary 2.11] \label{cor:ADlinear}
Let $\NN$ be a coupled cell network with set of cells $\CC$. For any choice of the total phase space $P$, a polydiagonal subspace is a synchrony subspace for $\NN$ if and only if it is flow-invariant under {\em all linear} admissible vector fields choosing the cell phase spaces to be $\RR$.
\end{cor}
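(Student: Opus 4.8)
The plan is to route both sides of the claimed equivalence through a single combinatorial condition on the colouring encoded by the polydiagonal, namely the balance condition, and to let Theorem 4.3 of~\cite{GST05} carry the hard content on the synchrony-subspace side. First I would fix the polydiagonal $\Delta$ and let $\bowtie$ be the equivalence relation on $\CC$ that it records, so that $x \in \Delta$ precisely when $x_c = x_d$ for all $c \bowtie d$. By Theorem 4.3 of~\cite{GST05}, $\Delta$ is a synchrony subspace for $\NN$ with total phase space $P$ (cells in $\RR^k$) if and only if $\bowtie$ is balanced; the key feature is that the balance condition is purely combinatorial and does not depend on $k$, so that being a synchrony subspace is independent of the choice of $P$. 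This reduces the corollary to the single equivalence: $\bowtie$ is balanced if and only if $\Delta$ is flow-invariant under every linear admissible vector field when the cell phase spaces are taken to be $\RR$.

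For that equivalence I would describe the linear admissible vector fields on $\RR^{|\CC|}$ explicitly. Under the admissibility constraints (identical cells share coefficients, and inputs of a common edge-type are summed), each such field has the form $x \mapsto \alpha x + \sum_{l} \beta_l A_l x$ with $\alpha, \beta_l \in \RR$ and $A_l$ the adjacency matrices of the edge-types; equivalently, they span the real vector space generated by $\id$ and the $A_l$. A linear field preserves $\Delta$ for all time exactly when its coefficient matrix sends $\Delta$ into itself, and since $\alpha\, \id$ preserves every subspace, flow-invariance of $\Delta$ under all these fields is equivalent to $A_l\, \Delta \subseteq \Delta$ for every edge-type $l$ (take $\alpha = 0$, one $\beta_l = 1$, and the rest zero).

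Finally I would unwind $A_l\,\Delta \subseteq \Delta$ into the balance condition. A vector $x \in \Delta$ is constant on each $\bowtie$-class, say $x_d = \xi_K$ for $d$ in class $K$, so $(A_l x)_c = \sum_K \bigl(\sum_{d \in K} a^{(l)}_{cd}\bigr)\xi_K$. Demanding $(A_l x)_c = (A_l x)_{c'}$ whenever $c \bowtie c'$, for all values of the free parameters $\xi_K$, forces $\sum_{d \in K} a^{(l)}_{cd} = \sum_{d \in K} a^{(l)}_{c'd}$ for every class $K$ and every $l$, which is exactly the statement that $\bowtie$ is balanced; conversely these equalities make each $A_l x$ constant on the classes, so $A_l\,\Delta \subseteq \Delta$. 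Chaining the three biconditionals (synchrony subspace $\Leftrightarrow$ balanced $\Leftrightarrow$ $A_l\,\Delta\subseteq\Delta$ for all $l$ $\Leftrightarrow$ linear $\RR$-invariance) yields the corollary.

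The step I expect to require the most care is the first: justifying that Theorem 4.3 of~\cite{GST05}, usually stated for a fixed phase space, delivers a dimension-independent balance criterion, so that the $k=1$ linear test detects synchrony for arbitrary $k$ and for general coupled cell networks, including those with repeated inputs of the same type. The linear-algebra equivalence $A_l\,\Delta \subseteq \Delta \Leftrightarrow \bowtie$ balanced is routine; the genuine content sits in the cited theorem, so the main obstacle is invoking it at the correct level of generality rather than in any explicit computation.
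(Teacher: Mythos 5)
Your proposal is correct and follows essentially the same route as the paper: the statement is quoted from \cite{AD14}, where it is obtained exactly as you do, by invoking Theorem 4.3 of \cite{GST05} (synchrony subspace $\Leftrightarrow$ balanced colouring, a purely combinatorial, phase-space-independent condition) and then identifying flow-invariance under the linear admissible vector fields with cell phase spaces $\RR$ with invariance under the adjacency matrices, which is the balance condition rewritten. The only cosmetic caveat is that your explicit form $x\mapsto \alpha x+\sum_l \beta_l A_l x$ presupposes a homogeneous network (cf.\ Remark~\ref{rmk:moreonlinear}); for a general network the internal coefficient $\alpha$ may vary with the cell-equivalence class, but this does not affect your argument, since such diagonal terms preserve every polydiagonal.
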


\begin{rmk}[\cite{AD14} Remark 2.12]\normalfont \label{rmk:moreonlinear} For an $n$-cell homogeneous network $\NN$, the linear admissible vector fields, assuming the cell phase spaces to be $\RR$, are generated by the identity map on $\RR^n$ and the linear maps on $\RR^n$ associated to the network adjacency matrices. It follows then that, assuming the cell phase spaces to be $\RR$, a polydiagonal  subspace is a synchrony subspace for $\NN$ if and only if it is left invariant by the network adjacency matrices.
\END
\end{rmk}

As proved by Stewart~\cite{S07}, the set of synchrony subspaces associated with a coupled cell network, taking the partial relation of inclusion $\subseteq$, is a complete lattice. 

\subsection*{Complete lattices}
Following Section 3.1 of Aguiar and Dias~\cite{AD14}, we present basic definitions and results on complete lattices.

Given a partially ordered set $L$ with a binary relation $\geq$ and a subset $M \subseteq L$, an element $a$ of $L$ is an {\em upper bound} of $M$ if $a \geq b$ for all $b \in M$. Further, an upper bound $a$ of $M$ is said to be the {\em least upper bound} of $M$ if every upper bound $a^\prime$ of $M$ satisfies $a^\prime \geq a$. Dually, we define {\em lower bound} and {\em greatest lower bound}. 

\vspace{2mm}

Now recall that a {\em lattice} is a partially ordered set $L$ such that every 
pair of elements $a, b \in L$ has a {\em unique least upper bound or join},  
denoted by $a \vee b$, and a {\em unique greatest lower bound or meet}, denoted by $a\wedge b$. 

\vspace{2mm}

A {\em complete lattice} is a lattice where every 
subset $M \subseteq L$ has a unique least upper bound or join, and  
a unique greatest lower bound or meet. A complete lattice has a {\em top} (maximal) element, denoted $\top$, and a {\em bottom} (minimal) element, 
denoted $\perp$. Observe that every finite lattice is complete, see~\cite[Corollary 2.12]{DP90}.

\begin{ex}[\cite{AD14} Example 3.1] \normalfont \label{ex:sublattice} 
Given a linear map $A:\ \RR^n \to \RR^n$, the set of $A$-invariant subspaces is a lattice (considering the partial order $\subseteq$) with the meet operation corresponding to the intersection and the join operation given by the sum. (The sum corresponds to the subspace generated by the union.) The top element is $\RR^n$ and the bottom element is $\{ 0\}$. Moreover, that lattice is either finite or uncountably infinite. See for example Gohberg~\etal~\cite[Proposition 2.5.4]{GLR86}.
\END
\end{ex}

A {\em sublattice} $S_L$ of a lattice $L$ is a nonempty subset of $L$ that is a lattice with the same meet and join operations as $L$. That is,
\[
x \in S_L \mbox{ and } y \in S_L \Longrightarrow x \vee y \in S_L \mbox{ and } x \wedge y \in S_L\, .
\]

An element $a$ in a lattice $L$ is said to be {\em join-irreducible} if it is not the bottom element (in case $L$ has a bottom element) and if $a = x \vee y$ then $a = x$ or $a=y$, for all $x,y \in L$.
A {\em meet-irreducible} element is defined dually. See for example~Davey~\cite[Definition 8.7]{DP90}. 

\vspace{2mm}

A subset $Q$ of a lattice $L$ is said to be {\em join-dense} in $L$ if $L = \{\bigvee J_x | J_x \subseteq Q \}$.
The dual of join-dense is {\em meet-dense}. See for example~\cite[Definition 2.34]{DP90}. 
\newline

A partially ordered set P satisfies DCC (the descending chain condition) provided that there is no infinite decreasing sequence in P, equivalently, provided that each non-empty subset of P has a minimal element. Trivially every finite partially ordered set satisfies DCC.

Denote the set of join-irreducible elements of $L$ by $\mathcal{J}(L)$ and the set of meet-irreducible elements by $\mathcal{M}(L)$. 

\begin{thm}[\cite{DP90} Theorem 2.46 (i)] \label{thm:join-dense}
Let $L$ be a lattice that satisfies (DCC). Then $\mathcal{J}(L)$ and, more generally, any subset $Q$ which contains $\mathcal{J}(L)$ is join-dense in $L$. Dually, $\mathcal{M}(L)$ and, more generally, any subset $Q$ which contains $\mathcal{M}(L)$ is meet-dense in $L$. 
\end{thm}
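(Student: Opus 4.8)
The plan is to establish the first assertion in the sharper quantitative form that \emph{every} $a \in L$ satisfies $a = \bigvee \{ j \in \mathcal{J}(L) : j \leq a \}$. Join-density of $\mathcal{J}(L)$ is then immediate, and the statement for an arbitrary $Q$ with $\mathcal{J}(L) \subseteq Q$ follows at once: the witnessing set $\{ j \in \mathcal{J}(L) : j \leq a\}$ already lies inside $Q$, so $a$ is exhibited as a join of a subset of $Q$. The engine of the argument is the descending chain condition, which I would use in the guise ``every nonempty subset of $L$ has a minimal element'' to run a minimal-counterexample (equivalently, a well-founded induction) argument.

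Concretely, I would let $B = \{ a \in L : a \neq \bigvee\{ j \in \mathcal{J}(L) : j \leq a\} \}$ be the set of elements for which the claim fails, and assume for contradiction that $B \neq \emptyset$. By (DCC) I choose $m$ minimal in $B$. Two quick reductions dispose of the degenerate cases: $m \neq \perp$, because $\perp = \bigvee \emptyset$ and no join-irreducible lies below $\perp$; and $m \notin \mathcal{J}(L)$, because a join-irreducible $m$ lies in its own witnessing set and is its join. Hence $m$ is join-reducible, so $m = x \vee y$ with $x < m$ and $y < m$. Minimality of $m$ gives $x,y \notin B$, that is $x = \bigvee\{ j \in \mathcal{J}(L): j \leq x\}$ and $y = \bigvee\{ j \in \mathcal{J}(L): j \leq y\}$. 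Writing $A_x, A_y$ for these two sets of join-irreducibles, I would then conclude $m = x \vee y = \bigvee A_x \vee \bigvee A_y = \bigvee (A_x \cup A_y) = \bigvee\{ j \in \mathcal{J}(L): j \leq m\}$, contradicting $m \in B$. Thus $B = \emptyset$ and the first assertion holds.

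The second assertion I would obtain by order-duality rather than by repeating the argument: applying the statement just proved to the order-dual lattice $L^\partial$ interchanges $\vee$ with $\wedge$, sends $\mathcal{J}(L^\partial)$ to $\mathcal{M}(L)$, and turns join-density into meet-density. Here one should note that the descending chain condition in $L^\partial$ is the ascending chain condition in $L$, which is the chain condition under which the meet-dense half holds. For the lattices of synchrony subspaces relevant to this paper the cell set is finite, so the lattice is finite and satisfies both chain conditions; hence both density statements apply simultaneously, and the form stated here is exactly what is needed later.

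The step that needs genuine care---and the one I expect to be the main obstacle---is the chain of equalities $\bigvee A_x \vee \bigvee A_y = \bigvee(A_x \cup A_y) = \bigvee\{ j \in \mathcal{J}(L): j \leq m\}$, since these joins may be infinite while $A_x \cup A_y$ is only a subset of $\{ j : j \leq m\}$. The first equality is associativity of join over a union of two families whose joins are already known to exist; the second holds because every element of $\{j : j \leq m\}$ is bounded by $m = \bigvee(A_x \cup A_y)$, so the two families have exactly the same upper bounds and therefore the same supremum. It is precisely this existence-and-rearrangement of suprema, combined with the availability of a minimal element, that forces the use of (DCC) at this point and, dually, of (ACC) in the meet-dense counterpart.
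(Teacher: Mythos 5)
Your proposal is correct, but there is nothing in the paper to compare it against: the paper does not prove this statement, it quotes it verbatim from Davey and Priestley~\cite{DP90} (their Theorem 2.46) and uses it as a black box. Your minimal-counterexample argument is essentially the standard textbook proof: (DCC) furnishes a minimal element $m$ of the set of failures, the bottom element and the join-irreducibles are excluded directly, and a join-reducible minimal failure $m = x \vee y$ with $x,y < m$ inherits the representation from $x$ and $y$. You also treat correctly the two points where genuine care is needed. First, since $L$ is not assumed complete, the possibly infinite joins $\bigvee A_x$, $\bigvee A_y$, $\bigvee (A_x \cup A_y)$ and $\bigvee\{ j \in \mathcal{J}(L) : j \leq m\}$ must be shown to exist and coincide, and your comparison of upper-bound sets does exactly that. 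Second, you are right that the ``Dually'' clause must be read with the dual hypothesis: a careless reading (keep (DCC), conclude meet-density of $\mathcal{M}(L)$) is actually false --- for instance, the lattice of finite subsets of $\mathbb{N}$ with $\mathbb{N}$ adjoined as top satisfies (DCC), yet $\mathcal{M}(L) = \emptyset$, so no element below the top is a meet of meet-irreducibles; the meet half requires (ACC), as you say, and both chain conditions hold trivially for the finite lattices of synchrony subspaces to which the paper applies the theorem. One cosmetic remark: your reduction ``$m \neq \perp$'' implicitly uses that a lattice with (DCC) has a bottom element (a minimal element $m_0$ satisfies $m_0 \wedge x = m_0$, hence $m_0 \leq x$ for all $x$), which is worth one line to make the case analysis airtight.
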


%%%
\subsection*{Lattice of synchrony subspaces for homogeneous networks}
%%%

The following corollary translates the result in Corollary 4.3 of Aguiar and Dias~\cite{AD14}, in terms of synchrony subspaces, and states that a polydiagonal  is a synchrony subspace for a homogeneous network if and only if it is a synchrony subspace for all its identical-edge subnetworks.

\begin{cor} [\cite{AD14} Corollary 4.3]  \label{cor:hom}
 Let $\NN = \left(\CC, \EE, \sim_{\CC},\sim_{\EE}\right)$ be a homogeneous coupled cell network. For $\EE_j$, $j=1, \ldots, l$, the $\sim_{\EE}$-equivalence classes of $\NN$, consider the identical-edge subnetworks $\NN_{\E_{j}}$ and the corresponding lattice of synchrony subspaces $V_{\N_{\E_{j}}}$.
Then the following holds:\\ 
\[
\displaystyle V_{\N} =  \bigcap_{j=1}^{l} V_{\N_{\E_{j}}}\, .
\]
\end{cor}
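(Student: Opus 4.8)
The plan is to reduce the statement to the characterization of synchrony subspaces given in Remark~\ref{rmk:moreonlinear}, which says that for a homogeneous network a polydiagonal is a synchrony subspace if and only if it is left invariant by all the adjacency matrices of the network. First I would observe that $\NN$ and each subnetwork $\NN_{\E_j}$ share the same set of cells $\CC$, hence the same total phase space $P$, so that the notion of polydiagonal subspace is common to all of them and the asserted equality of lattices is an equality of sets of subspaces of $P$. The key point connecting the network to its identical-edge subnetworks is that the adjacency matrices of $\NN$ are exactly the matrices $A_1, \ldots, A_l$ (one per edge-type $\EE_j$), and the single adjacency matrix of the subnetwork $\NN_{\E_j}$ is precisely $A_j$, since $\NN_{\E_j}$ retains only the edges of type $\EE_j$.

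With this identification in hand, the proof becomes a short logical equivalence. Let $\Delta$ be a polydiagonal subspace of $P$. By Remark~\ref{rmk:moreonlinear} applied to $\NN$, we have $\Delta \in V_{\N}$ if and only if $\Delta$ is invariant under each $A_j$, $j = 1, \ldots, l$. On the other hand, applying the same remark to each subnetwork $\NN_{\E_j}$ (which is itself a homogeneous network with the single adjacency matrix $A_j$), we have $\Delta \in V_{\N_{\E_j}}$ if and only if $\Delta$ is invariant under $A_j$. Combining these, $\Delta$ is invariant under all of $A_1, \ldots, A_l$ if and only if $\Delta \in V_{\N_{\E_j}}$ for every $j$, which is precisely the statement that $\Delta \in \bigcap_{j=1}^{l} V_{\N_{\E_j}}$. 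Thus $V_{\N}$ and $\bigcap_{j=1}^{l} V_{\N_{\E_j}}$ contain exactly the same polydiagonal subspaces, giving the desired equality. Since the result is cited as a translation of Corollary~4.3 of~\cite{AD14}, one could alternatively simply invoke that corollary directly and explain the translation into the language of synchrony subspaces via Corollary~\ref{cor:ADlinear}.

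I expect the main (and only mild) obstacle to be purely a matter of bookkeeping rather than genuine difficulty: one must be careful that the linear admissible vector fields of $\NN$ are generated by the identity together with all the $A_j$, whereas the linear admissible vector fields of $\NN_{\E_j}$ are generated by the identity together with $A_j$ alone. The identity map is invariant-preserving trivially (every subspace is invariant under it), so it contributes no constraint, and the constraints coming from $\NN$ decouple exactly into the union of the constraints coming from the individual subnetworks. Making this decoupling explicit — that invariance under the span of $\{\id, A_1, \ldots, A_l\}$ is equivalent to simultaneous invariance under each $A_j$ — is the crux, and it follows immediately since a subspace is invariant under a set of maps if and only if it is invariant under each map in the set and hence under their linear span. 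No convergence, dimension, or lattice-theoretic subtlety enters, because the equivalence is established pointwise on polydiagonals and the lattice structures on both sides are inherited from inclusion on the same ambient set of polydiagonals of $P$.
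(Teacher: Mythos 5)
Your proof is correct and is essentially the paper's own justification: the paper states this corollary without proof, as a direct translation of Corollary~4.3 of \cite{AD14}, and the reasoning behind that citation is exactly what you give — the characterization (Corollary~\ref{cor:ADlinear} together with Remark~\ref{rmk:moreonlinear}) of synchrony subspaces of a homogeneous network as the polydiagonals left invariant by all its adjacency matrices, combined with the observation that the single adjacency matrix of $\NN_{\E_{j}}$ is precisely the $j$-th adjacency matrix $A_j$ of $\NN$. Since simultaneous invariance under $A_1, \ldots, A_l$ is the same as invariance under each $A_j$ separately, the set equality $V_{\N} = \bigcap_{j=1}^{l} V_{\N_{\E_{j}}}$ follows exactly as you argue.
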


As remarked in \cite{AD14}, one way to implement an efficient algorithm to obtain the lattice $V_{\N}$ is to find the lattice $V_{\N_{\E_j}}$ for one of the subnetworks $\NN_{\E_j}$, with $j \in \{1,\ldots,l\}$, and then find the subset of subspaces in $V_{\N_{\EE_j}}$ that are left invariant by the adjacency matrices of the other subnetworks $\NN_{\E_k}$,  $k \in \{1,\ldots,l\} \setminus \{j\}$. 

\begin{ex}  \normalfont 
Consider the homogeneous network $\NN$  with asymmetric inputs in Figure~\ref{fig:7cnetwork} and its identical-edge subnetworks $\NN_{\E_1}$ and $\NN_{\E_2}$, in Figures~\ref{fig:subN1} and \ref{fig:subN2}, with $\EE_1$ and $\EE_2$ the solid and dashed edge-types, respectively. Tables~\ref{t:rec_ex_nb}, \ref{t:rec_ex_pb} and \ref{t:rec_ex_npb} describe the synchrony subspaces in the lattice $V_{\N_{\EE_1}}$ for the subnetwork $\NN_{\E_1}$ obtained in Example~\ref{ex:lattice_union} of Section~\ref{sec:lattice_union}.  The subnetwork $\NN_{\E_1}$ is the union of two $1$-input regular coupled cell networks. In Section~\ref{sec:lattice_union} we see how to obtain the lattice of synchrony subspaces for the union of two networks from their lattice of synchrony subspaces.  The lattice $V_{\N_{\EE_2}}$ of the synchrony subspaces for $\NN_{\E_2}$ is given in Tables~\ref{tab:irredu} and \ref{tab:nonirredu}. This lattice is determined in Example~\ref{ex:lattice_1_regular} using the results in Section~\ref{sec:lattice_1_regular} for the lattice of synchrony subspaces of $1$-input regular coupled cell networks. 
By Corollary~\ref{cor:hom}, making the intersection of the lattices $V_{\N_{\EE_1}}$ and $V_{\N_{\EE_2}}$, the lattice $V_{\N}$ for the network $\NN$ is given by the synchrony subspaces in Table~\ref{t:lattice_hom_asym}.

\begin{table}[!htb]   
\begin{center}
\scalebox{.8}
{
\begin{tabular}{|l|l|}
\hline
\multicolumn{1}{|c}{\Gape[.2cm][.2cm]{\hspace{2.3in}$V_{\N}$}} & \\
\hline 
\hline
\Gape[.1cm][.1cm]
{$\{ {\bf x} :\  x_3 = x_7 \}$} & $\{ {\bf x} :\  x_1=x_2 =x_3=x_4=x_5=x_7\}$ \\
$\{ {\bf x} :\  x_1=x_3=x_7,\ x_2 = x_4=x_5\} $ & $ \{ {\bf x} :\  x_1=x_2 =x_3=x_4=x_5=x_6=x_7 \}$ \\
$\{ {\bf x} :\  x_1=x_3=x_6=x_7,\ x_2 = x_4=x_5\} $ &  \\
\hline
\end{tabular}
}
\caption{The lattice $V_{\N}$ of the synchrony subspaces for the network $\NN$ in Figure~\ref{fig:7cnetwork}.} \label{t:lattice_hom_asym}
\end{center}
\end{table}

\END
\end{ex}

%%%
\section{Lattice of synchrony subspaces for the union of networks} \label{sec:lattice_union}
%%%

As  already mentioned, each identical-edge subnetwork of a given coupled cell network may be a (connected) network or the union of (connected) networks with the same cell and edge-types. In this section we show how to obtain the lattice of synchrony subspaces for the disjoint union of two coupled cell networks with the same cell and edge-types from the lattice of synchrony subspaces of those networks.

We start by introducing the definitions of non-bipartite, pairing bipartite and non-pairing bipartite synchrony subspace. This terminology was first introduced in Aguiar and Ruan~\cite{AR12} in terms of balanced equivalence relations. The definitions presented here are equivalent to those for balanced equivalence relations in \cite{AR12}.

\begin{Def}\rm \label{Def:bip}\label{Def:pairbip} 
Let $\NN=\NN_1 + \NN_2$ and $\Delta_{\bowtie} \in V_{\N}$  a synchrony subspace for $\NN$. The  synchrony subspace $\Delta_{\bowtie}$ is called {\it bipartite}, if there exists at least one coordinate equality condition $x_{c_1} = \cdots = x_{c_m}$ in the definition of the polydiagonal $\Delta_{\bowtie}$ such that $\{ c_1, \ldots, c_m\} \cap  \CC_1 \ne \emptyset$ and $\{ c_1, \ldots, c_m\} \cap  \CC_2 \ne \emptyset$. Otherwise, $\Delta_{\bowtie}$ is called {\it non-bipartite}. If $\Delta_{\bowtie}$ is {\it bipartite} then it is called {\it pairing}, if every equality condition in $\Delta_{\bowtie}$  is of the form $x_{c_1} = x_{c_2}$ with $c_1  \in  \CC_1$ and $c_2 \in \CC_2$ and there is no cell $c_3 \in \left(C_1\setminus \{c_1\} \cup C_2 \setminus \{c_2\} \right)$ such that $x_{c_1} =x_{c_2} = x_{c_3}$.
\END\end{Def}

Let $\NN_i = \left(\CC_i, \EE_i, \sim_{\CC_i} = \{\CC_i\},\sim_{\EE_i} = \{\EE_i\}\right)$, $i=1,2$, be two regular coupled cell networks with the same cell and edge-type and such that $\CC_1 \cap \CC_2 = \emptyset$ and $\EE_1 \cap \EE_2 = \emptyset$, and  $\NN = \NN_1 +  \NN_2$ the disjoint union of the networks $\NN_1$ and $\NN_2$. 
We denote by $V^{nb}_{\N}$, $V^{pb}_{\N}$ and $V^{npb}_{\N}$ the sets
 of the non-bipartite, pairing bipartite and non-pairing bipartite synchrony subspaces for $\NN$, respectively.

Given a synchrony subspace  $\Delta_{\bowtie} \in V_{\N}$, for $i=1,2$, we define the polydiagonal
$$
\Delta_{\bowtie_{P_i}} =\{ x \in \left(\RR^k\right)^{n_i}:\ x_c = x_d \mbox{ for all } c, d \in C_i \mbox{ such that } x_c = x_d \mbox{ is a condition in }  \Delta_{\bowtie} \},
$$
with $n_i = \# \CC_i$. 

Analogously, for $i=1,2$, given a synchrony subspace  $\Delta_{\bowtie_i} \in V_{\N_i}$ we define the polydiagonal 
$$
\Delta_{\bowtie_{E_i}} =\{ x \in \left(\RR^k\right)^n:\ x_c = x_d  \mbox{ such that } x_c = x_d \mbox{ is a condition in }  \Delta_{\bowtie_i} \},
$$
with $n=n_1+n_2$. 

We note that,  as the cells in $\CC_1$ do not receive inputs from the cells in $\CC_2$, and vice-versa, the polydiagonal $\Delta_{\bowtie_{P_i}}$, $i=1,2$, is a synchrony subspace for $\NN_i$ and the polydiagonal $\Delta_{\bowtie_{E_i}}$ is a synchrony subspace for $\NN$. 
 
Given two synchrony subspaces $\Delta_{\bowtie_1} \in V_{\N_1}$ and $\Delta_{\bowtie_2} \in V_{\N_2}$ we denote by $\Delta_{\bowtie_1}  \dot \cap  \Delta_{\bowtie_2}$ the intersection $\Delta_{\bowtie_{E_1}}  \cap\Delta_{\bowtie_{E_2}}$. 
Remember Definition~\ref{def:intsym} of interior symmetry.

The next theorem states that the results in Section 4.4.1 of Aguiar and Ruan~\cite{AR12} for the join of networks, in terms of balanced equivalence relations, are also valid for the union of networks, in terms of synchrony subspaces. For more details, see Remark~\ref{rmk:join} below.

\begin{thm}\label{thm:main_join_same}
Let $\NN= \NN_1 + \NN_2$ be the disjoint union of two identical-edge networks $\NN_1,\NN_2$ with the same edge-type and valency $v_1$ and $v_2$, respectively. 

\begin{itemize}
\item[(a)] If $v_1 \ne v_2$ then $V_{\N} = V^{nb}_{\N}$. 
\item[(b)] If $v_1 = v_2$ then $V_{\N} = V^{nb}_{\N} \cup V^{pb}_{\N} \cup V^{npb}_{\N}.$
\end{itemize}

Moreover,
\begin{enumerate}
\item[(i)] $\Delta_{\bowtie} \in V^{nb}_{\N}$ if and only if $\Delta_{\bowtie}  = \Delta_{\bowtie_1}  \dot \cap  \Delta_{\bowtie_2}$ for $\Delta_{\bowtie_i}  \in  V_{\N_i}$, $i=1,2$;

\item[(ii)] $\Delta_{\bowtie} \in V^{pb}_{\N}$ with $\Delta_{\bowtie} = \{ {\bf x} : \ x_{c_i} = x_{d_i}, \ c_i \in \CC_1,\ d_i \in \CC_2, \ i=1,\dots,m \}$ if and only if the permutation given by the product of disjoint transpositions $(c_i,d_i)$, $i=1,\dots,m $ is an interior symmetry of $\NN$ on $\{c_1, d_1, \ldots, c_m,d_m\}$;

\item[(iii)] $\Delta_{\bowtie} \in V^{npb}_{\N}$ if and only if $\Delta_{\bowtie} = L\left( \Delta_{\bowtie_Q} \right)$, where $\Delta_{\bowtie_Q} \in V_Q^{pb}$, with $Q$ a quotient network associated to a nontrivial $\Delta_{\tilde \bowtie} \in V^{nb}_{\N}$ such that $\Delta_{\bowtie} \subset \Delta_{\tilde \bowtie}$.

\end{enumerate} 
\end{thm}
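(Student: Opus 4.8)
The plan is to reduce everything to the block structure of the adjacency matrix. Since $\NN = \NN_1 + \NN_2$ is a disjoint union, no cell of $\CC_1$ receives an edge from $\CC_2$ and conversely, so the adjacency matrix of $\NN$ is block diagonal, $A = \diag(A_1,A_2)$ with $A_i$ the adjacency matrix of $\NN_i$. By Corollary~\ref{cor:ADlinear} and Remark~\ref{rmk:moreonlinear}, a polydiagonal is a synchrony subspace for $\NN$ if and only if it is left invariant by $A$, equivalently its associated colouring is balanced. I would first record the trichotomy: every colour class of a balanced colouring lies entirely in $\CC_1$, entirely in $\CC_2$, or meets both (a \emph{mixed} class), and $\Delta_{\bowtie}$ is bipartite precisely when it has a mixed class; the split of bipartite subspaces into pairing and non-pairing is then by Definition~\ref{Def:bip}, which already yields the set equality $V_{\N} = V^{nb}_{\N} \cup V^{pb}_{\N} \cup V^{npb}_{\N}$ of part~(b). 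For part~(a), if $K$ is a mixed class, pick $c \in K \cap \CC_1$ and $d \in K \cap \CC_2$; the balanced condition forces the total number of inputs of $c$ and of $d$ (the sum of the input-colour counts over all colours) to coincide, i.e. $v_1 = v_2$. Hence $v_1 \ne v_2$ excludes mixed classes and $V_{\N} = V^{nb}_{\N}$.

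For (i), given a non-bipartite $\Delta_{\bowtie}$ every class sits on one side, so its within-$\CC_i$ conditions define the restriction $\Delta_{\bowtie_{P_i}} \in V_{\N_i}$ (a synchrony subspace for $\NN_i$, as already observed), and since there are no mixed conditions $\Delta_{\bowtie} = \Delta_{\bowtie_{P_1}} \dot\cap \Delta_{\bowtie_{P_2}}$. Conversely, for $\Delta_{\bowtie_i} \in V_{\N_i}$ the subspaces $\Delta_{\bowtie_{E_i}}$ are synchrony subspaces for $\NN$, as already observed, so their intersection $\Delta_{\bowtie_1} \dot\cap \Delta_{\bowtie_2}$ is a synchrony subspace, and it carries only within-$\CC_i$ conditions, hence is non-bipartite.

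For (ii) I would write out the balanced condition for the colouring whose classes are the pairs $\{c_i,d_i\}$ together with the remaining singletons. Comparing, for each colour class, the input-colour counts of $c_i$ and of $d_i$ and using that the cross entries $a_{c_i d_l}$ and $a_{d_i c_l}$ all vanish (no edges between $\CC_1$ and $\CC_2$), the balanced equations collapse to $a_{c_i c_l} = a_{d_i d_l}$, to $a_{c_i e} = 0$ for singletons $e \in \CC_1$, and to $a_{d_i e} = 0$ for singletons $e \in \CC_2$. These are exactly the conditions $a_{cd}^{(l)} = a^{(l)}_{\sigma(c)\sigma(d)}$ of \eqref{eq:int_symm} for the involution $\sigma = \prod_i (c_i,d_i)$ on $S = \{c_1,d_1,\ldots,c_m,d_m\}$, the remaining cross cases $a_{c_i d_l} = a_{d_i c_l}$ holding trivially since both sides vanish. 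I expect this matching to be the crux: the point is precisely that the vanishing cross entries make the balanced equations coincide with the interior-symmetry equations, rather than the former being strictly weaker as it would be in the presence of edges between the two components.

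For (iii), given a non-pairing bipartite $\Delta_{\bowtie}$ I would split every mixed class $K$ into $K \cap \CC_1$ and $K \cap \CC_2$; this yields $\Delta_{\tilde\bowtie} = \Delta_{\bowtie_{P_1}} \dot\cap \Delta_{\bowtie_{P_2}} \in V^{nb}_{\N}$ by (i), with $\Delta_{\bowtie} \subset \Delta_{\tilde\bowtie}$. Passing to the quotient $Q$ associated to $\Delta_{\tilde\bowtie}$, Proposition~\ref{prop:rest_lift}(a) makes $R(\Delta_{\bowtie})$ a synchrony subspace for $Q$, and its conditions are exactly the dropped mixed conditions, one per mixed class, each identifying the $Q$-cell $K \cap \CC_1$ with the $Q$-cell $K \cap \CC_2$; since $Q$ inherits the bipartition from $\NN$, this is pairing, so $R(\Delta_{\bowtie}) \in V_Q^{pb}$, and $\Delta_{\bowtie} = L(R(\Delta_{\bowtie}))$ by the inverse relation of lift and restriction. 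Here $\Delta_{\tilde\bowtie}$ is nontrivial because a non-pairing bipartite colouring must have some class with two cells on the same side, i.e. a within-$\CC_i$ condition, which survives in $\Delta_{\tilde\bowtie}$. Conversely, if $\Delta_{\bowtie} = L(\Delta_{\bowtie_Q})$ with $\Delta_{\bowtie_Q} \in V_Q^{pb}$ and $Q$ associated to a nontrivial $\Delta_{\tilde\bowtie} \in V^{nb}_{\N}$ with $\Delta_{\bowtie} \subset \Delta_{\tilde\bowtie}$, then $\Delta_{\bowtie}$ is a synchrony subspace by Proposition~\ref{prop:rest_lift}(b); the pairing identification in $\Delta_{\bowtie_Q}$ lifts to a mixed class, so $\Delta_{\bowtie}$ is bipartite, while the within-$\CC_i$ condition supplied by the nontrivial $\Delta_{\tilde\bowtie}$ is retained by the lift, so $\Delta_{\bowtie}$ is not pairing; hence $\Delta_{\bowtie} \in V^{npb}_{\N}$.
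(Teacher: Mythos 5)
Your proof is correct and takes essentially the same route as the paper's: the valency obstruction for (a)/(b), extended intersections for (i), the vanishing cross adjacency entries collapsing the flow-invariance/balanced equations to the interior-symmetry equations \eqref{eq:int_symm} for (ii), and the quotient/restriction/lift argument via Proposition~\ref{prop:rest_lift} for (iii). If anything, you are slightly more careful than the paper in spelling out the singleton-class conditions in (ii) and the nontriviality of $\Delta_{\tilde \bowtie}$ in (iii), but the substance is identical.
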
 
\begin{proof}
This proof follows very closely that of Theorem 4.17 in Aguiar and Ruan~\cite{AR12} for the more general definition of the join of two networks designated by $f$-join.

We first note that, a necessary condition for a cell in $\NN_1$ and a cell in $\NN_2$ to synchronize is that they have the same valency. (a) Then, if $v_1 \ne v_2$ each cell in $\NN_i$, $i=1,2$ can synchronize only with cells in $\NN_i$ and, thus, there can be no bipartite synchrony subspaces for $\NN$. (b) If $v_1 = v_2$, the cells in $\NN_1$ and $\NN_2$ can synchronize and thus, by Definition~\ref{Def:bip}, a synchrony subspace either is non-bipartite, pairing bipartite or non-pairing bipartite. 

(i)  Let $\Delta_{\bowtie}$ be a non-bipartite synchrony subspace for $\NN$ and consider the associated synchrony subspaces $\Delta_{\bowtie_{P_1}}$ and  $\Delta_{\bowtie_{P_2}}$, for $\NN_1$ and $\NN_2$, respectively. Since $\Delta_{\bowtie}$ is a non-bipartite synchrony subspace, for every coordinate equality condition $x_{c_1} = \cdots = x_{c_m}$ in its definition, either $\{ c_1, \ldots, c_m\} \subseteq  \CC_1$ or $\{ c_1, \ldots, c_m\} \subseteq  \CC_2$. Thus, $\Delta_{\bowtie} = \left( \Delta_{\bowtie_{P_1}} \right)_{E_1} \cap \left( \Delta_{\bowtie_{P_2}} \right)_{E_2} =  \Delta_{\bowtie_{P_1}} \dot \cap \Delta_{\bowtie_{P_1}} $ 
On the contrary, consider that $\Delta_{\bowtie}  = \Delta_{\bowtie_1}  \dot \cap  \Delta_{\bowtie_2}$ for some $\Delta_{\bowtie_i}  \in  V_{\N_i}$, $i=1,2$, that is, $\Delta_{\bowtie}  =  \Delta_{\bowtie_{E_1}}  \cap  \Delta_{\bowtie_{E_2}}$. Then, in the definition of $\Delta_{\bowtie}$ there is no coordinate equality condition involving a cell in $\CC_1$ and a cell in $\CC_2$, that is, $\Delta_{\bowtie}$ is non-bipartite.
\vspace{0.1in}

(ii) Let $\Delta_{\bowtie}$ be a pairing bipartite synchrony subspace for $\NN$ and $x_{c_i} = x_{d_i}$, with $c_i \in \CC_1$ and  $d_i \in \CC_2$, for $i=1,\dots,m$, the coordinate equality conditions that define $\Delta_{\bowtie}$. For convenience, index the cells of $\NN$ by $b_1,\dots, b_n$ such that $c_i=b_{2i-1}$, $d_i=b_{2i}$ for $i=1,\dots, m$. Define $S=\{b_1,b_2,\dots,b_{2m-1},b_{2m}\}$ and $\sigma=(b_1\ b_2)(b_3\ b_4)\cdots(b_{2m-1}\ b_{2m})$. 
Let  $\mm(b_j,I(b_i))$ denote the number of times that $x_{b_j}$ appears as an input variable of the function $f$ defining the equation for $\dot x_{b_i}$ and $A:=(a_{ij})_{n \times n}$ be the adjacency matrix of $\NN$. Then,
\[a_{ij}=\mm(b_j,I(b_i)),\quad \forall\, 1\le i,j\le n.\] 
Consider $c_i=b_{2i-1}$, $d_i=b_{2i}$ and $c_j=b_{2j-1}$, $d_j=b_{2j}$ for some $i,j\in \{1,\dots,m\}$.  We have  that, $\Delta_{\bowtie}$ is a flow-invariant subspace satisfying the condition $x_{c_i} = x_{d_i}$ if and only if the equations for $\dot x_{c_i}$ and $\dot x_{d_i}$ coincide. Since, for $k>2m$, we have
\[a_{2i-1,k}=\mm(b_k,I(c_i))=\mm(b_k,I(d_i))=a_{2i,k},\]
that happens,  if and only if
{\small
\begin{equation}\label{eq:thm_pf_1}
a_{2i-1,2j-1}+a_{2i-1,2j}=\mm(c_j,I(c_i))+\mm(d_j,I(c_i))= \mm(c_j,I(d_i))+\mm(d_j,I(d_i)) = a_{2i,2j-1}+a_{2i,2j}.
\end{equation}
}
As the cell $c_i$ does not receive any input from the cell $d_j$ and the cell $d_i$ does not receive any input from the cell $c_j$, that is, $a_{2i-1,2j} = a_{2i,2j-1} =0$, the equality in (\ref{eq:thm_pf_1}) is equivalent to
\[a_{2i-1,2j-1}=a_{2i,2j}.\]

Therefore, we conclude that $\Delta_{\bowtie}$ is a pairing bipartite synchrony subspace if and only if $\sigma$ is an interior symmetry of $\NN$ on $S$.

(iii) Let $\Delta_{\bowtie} \in V^{npb}_{\N}$ be a non-pairing bipartite synchrony subspace for $\NN$. For simplicity of the proof, and without loss of generality, we are going to assume that $\Delta_{\bowtie}$ is defined only by two equality conditions $x_{c_1} = \cdots = x_{c_k} = x_{d_1} = \cdots = x_{d_l}$ and $x_{c_{k+1}} = \cdots = x_{c_{k+r}} = x_{d_{l+1}} = \cdots = x_{d_{l+s}}$, with $\{ c_1, \ldots, c_{k+r} \} \subseteq \CC_1$ and $\{ d_1, \ldots, d_{l+s}\} \subseteq \CC_2$.
 Since the cells in $\CC_1$ do not receive inputs from the cells in $\CC_2$, and vice-versa, we conclude that both the polydiagonal $\Delta_{\bowtie_{P_1}}$ defined by the equalities $x_{c_1} = \cdots = x_{c_k}$ and $x_{c_{k+1}} = \cdots = x_{c_{k+r}}$ and the polydiagonal $\Delta_{\bowtie_{P_2}}$  defined by the equalities $x_{d_1} = \cdots = x_{d_l}$ and $x_{d_{l+1}} = \cdots = x_{d_{l+s}}$ are synchrony subspaces for $\NN$. 
Consider the quotient network $Q_1$ associated to the synchrony subspaces $\Delta_{\bowtie_{P_1}}$  for $\NN_1$, where we identify cells $c_2,\ldots,c_k$ with cell $c_1$ and cells $c_{k+2},\ldots,c_{k+r}$ with cell $c_{k+1}$, and the quotient network $Q_2$ associated to the synchrony subspaces $\Delta_{\bowtie_{P_2}}$  for $\NN_2$, where we identify cells $d_2,\ldots,d_l$ with cell $d_1$ and cells $d_{l+2},\ldots,d_{l+s}$ with cell $c_{l+1}$
The network $Q = Q_1 + Q_2$ given by the disjoint union of the networks $Q_1$ and $Q_2$ corresponds to the quotient network of $\NN$ associated to the synchrony subspace $\Delta_{\tilde\bowtie} = \Delta_{\bowtie_1}  \dot \cap  \Delta_{\bowtie_2}$ in $V^{nb}_{\N}$. By Proposition~\ref{prop:rest_lift} (a), the restriction $\Delta_{\bowtie_Q} = R\left(  \Delta_{\bowtie}\right)$ of $\Delta_{\bowtie} \subset \Delta_{\bowtie_1}  \dot \cap  \Delta_{\bowtie_2} $ to $Q$ is a synchrony subspace for $Q$. Moreover, $\Delta_{\bowtie_Q}$ is a pairing bipartite synchrony subspace, as it is defined by the two equality conditions $x_{c_1} = x_{d_1}$ and $x_{c_{k+1}} = x_{d_{l+1}}$, and $\Delta_{\bowtie} = L\left( \Delta_{\bowtie_Q} \right)$.

On the other hand, let $\Delta_{\bowtie_Q}$ be a pairing bipartite synchrony subspace for the quotient network $Q$ of $\NN$ associated with a synchrony subspace $\Delta_{\tilde\bowtie}$ of $\NN$. Let $\Delta_{\bowtie} = L\left(\Delta_{\bowtie_Q} \right)$ be the lift of $\Delta_{\bowtie_Q}$ to $\NN$. By Proposition~\ref{prop:rest_lift} (b), $\Delta_{\bowtie}$ is a synchrony subspace for $\NN$. Moreover, since $\Delta_{\tilde\bowtie}$ is a nontrivial non-bipartite synchrony subspace for $\NN$ and $\Delta_{\bowtie_Q}$ is a pairing bipartite synchrony subspace for $Q$, we conclude that $\Delta_{\bowtie}$ is a nonparing bipartite synchrony subspace for $\NN$ and that $\Delta_{\bowtie} \subset \Delta_{\tilde\bowtie}$.
\qed
\end{proof}
\vspace{0.1in}

\begin{rmk}\rm
Let $\MM(V_{\N_i})$ be the set of the meet-irreducible elements for the lattice of synchrony subspaces of $\NN_i$, $i=1,2$, and define the set
$
\MM(V^{nb}_{\N} ) =  \left( \MM(V_{\N_1})  \dot \cap P_2 \right) \cup  \left( P_1  \dot \cap  \MM(V_{\N_2}) \right). 
$
From  Theorem~\ref{thm:main_join_same} (i) it follows that the elements in $V^{nb}_{\NN}$ are obtained by intersections of the synchrony subspaces in $\MM(V^{nb}_{\N} )$. Remember that the meet operation is the intersection.  
From Theorem~\ref{thm:main_join_same} (a) it follows that, if $v_1 \ne v_2$ then $\MM(V^{nb}_{\N} )$ is the set of the meet-irreducible elements for $V_{\N}$. 
\END 
\end{rmk}

\begin{rmk}\rm \label{rmk:union-1-input}
For the particular case of the disjoint union $\NN= \NN_1 + \NN_2$  of two 1-input regular networks $\NN_1$ and $\NN_2$, with the same edge-type, we have the following useful remarks:
\begin{itemize}
\item[(i)] Any interior symmetry of $\NN$ is on a set $S$ that has to include both the cells in the ring of $\NN_1$ and the cells in the ring of $\NN_2$.
\item[(ii)] Let $m_i$ be the number of cells in the ring of $\NN_i$, $i=1,2$. If $m_1 = m_2$ then there are interior symmetries of $\NN$ and, thus, there are pairing-bipartite synchrony subspaces for $\NN$. Otherwise, if $m_1 \ne m_2$ then there are no interior symmetries of $\NN$ and the set $V_{\N}^{pb}$ is empty.
\item[(iii)] Every quotient network $Q$ of $\NN$ associated to a non-bipartite synchrony subspace $\Delta_{\bowtie}$ is again the disjoint union of two 1-input regular networks $Q_1$ and $Q_2$, $Q = Q_1 + Q_2$, such that $Q_i$ is the quotient network of $\NN_i$ associated to $\Delta_{\bowtie_{P_i}}$, $i=1,2$. If $Q$ is a quotient network of $\NN$ associated to a bipartite (pairing or non-pairing) synchrony subspace then it is a 1-input regular network.
\item[(iv)] When using Theorem \ref{thm:main_join_same} (iii) to get the set $V_{\N}^{npb}$, we have to look for the interior symmetries of the quotient networks of $\NN$ associated to the non-bipartite synchrony subspaces. From (iii) it follows that (ii) is useful to identify the quotient networks that have or not interior symmetries.
\item[(v)] Although when $m_1 \ne m_2$ there are no interior symmetries for $\NN$, the same may not be true for the quotient networks of $\NN$. It may happen that the rings of the 1-input quotient networks $Q_1$ and $Q_2$ have the same number of cells. That happens in particular for the quotient network of the non-bipartite synchrony subspace where all cells of $\NN_1$ are synchronized and all cells of $\NN_2$ are synchronized. The quotient networks $Q_1$ and $Q_2$ will then consist of a unique cell with a self-loop and the network $Q$ has an interior symmetry on the set formed by those two cells. The lifting of the corresponding bipartite synchrony subspace is then the full synchronous subspace for $\NN$.
\end{itemize}
\END 
\end{rmk}

\begin{ex}  \normalfont \label{ex:lattice_union}
Consider the non connected network $\NN_{\E_1}$ in Figure~\ref{fig:subN1}, which in this example we will denote by $\NN$ to simplify the notation. We have $\NN = \NN_1 + \NN_2$, with $\NN_1$ and $\NN_2$ the networks on the left and right of Figure~\ref{fig:subN1}, respectively. We use Theorem~\ref{thm:main_join_same} to find the lattice $V_{\N}$ of the synchrony subspaces for $\NN$ from the lattices $V_{\N_1}$ and $V_{\N_2}$ of the synchrony subspaces for  $\NN_1$ and $\NN_2$, respectively. The lattices $V_{\N_1}$ and $V_{\N_2}$ are presented in Table \ref{t:rec_ex_G1G2}. Since $\NN_1$ and $\NN_2$ have the same valency, we have $V_{\N} = V^{nb}_{\N} \cup V^{pb}_{\N} \cup V^{npb}_{\N}$. By (i), we get the non-bipartite synchrony subspaces in $V^{nb}_{\N}$, which are listed in Table~\ref{t:rec_ex_nb}, by making the extended intersection of every synchrony subspace in $V_{\N_1}$ with every synchrony subspace in $V_{\N_2}$. The interior symmetries of $\NN$ given by the product of disjoint transpositions of the form $(c_i,d_i)$, with $c_i$ a cell in $\NN_1$ and $d_i$ a cell in $\NN_2$, are $(17)(25)$, $(12)(57)$, $(17)(25) (36)$ and $(12)(57)(34)$. By (ii), we get the pairing bipartite synchrony subspaces in $V^{pb}_{\N}$, which are presented in Table~\ref{t:rec_ex_pb}. By (iii), the non-pairing bipartite synchrony subspaces in $V^{npb}_{\N}$ are then obtained by considering, for every non-bipartite synchrony subspace, the corresponding quotient network and then making the lift of the bipartite synchrony subspaces for that quotient network, see Table~\ref{t:rec_ex_npb}. We note that, using Remark~\ref{rmk:union-1-input} (iv), we can conclude easily that for eighteen of the non-bipartite synchrony subspaces in $V^{nb}_{\N}$ the corresponding quotient network has no interior symmetries, and so, no pairing bipartite synchrony subspaces. Those eighteen  non-bipartite synchrony subspaces are: $\Delta_i$, $i \in \{ 1,4,5,6,8,9,11,12,16,19,22,23,24,26,27,29,30,34 \}$. Finally, the lattice $V_{\N}$ is formed by the 71 synchrony subspaces given in Tables~\ref{t:rec_ex_nb}-~\ref{t:rec_ex_npb}.
\END
\end{ex}

\begin{table}[!htb]   
\begin{center}
\scalebox{.8}
{\begin{tabular}{|l|l|}
\hline
\multicolumn{1}{|c|}{\Gape[.2cm][.2cm]{$V_{\N_1}$}}& \multicolumn{1}{c|}{$V_{\N_2}$}\\
\hline \hline
\Gape[.1cm][.1cm]{$P_1$}& $P_2$ \\
$\Delta^1_1 =\{ {\bf x} :\  x_2 = x_7\}$& $\Delta^2_1 =\{ {\bf x} :\  x_1 = x_5\}$\\
$\Delta^1_2 =\{ {\bf x} :\  x_3 = x_7\}$& $\Delta^2_2 =\{ {\bf x} :\  x_1 = x_6\}$\\
$\Delta^1_3 =\{ {\bf x} :\  x_2 = x_3 = x_7\}$& $\Delta^2_3 =\{ {\bf x} :\  x_4 = x_5\}$\\
& $\Delta^2_4 =\{ {\bf x} :\  x_1 = x_5 = x_6\}$\\
& $\Delta^2_5 =\{ {\bf x} :\  x_1 = x_4 = x_5\}$\\
& $\Delta^2_6 =\{ {\bf x} :\  x_1 = x_5,\  x_4= x_6\}$\\
& $\Delta^2_7 =\{ {\bf x} :\  x_1 = x_6,\  x_4= x_5\}$\\
& $\Delta^2_8 =\{ {\bf x} :\  x_1 = x_4 = x_5= x_6\}$\\
\hline
\end{tabular}
}
\caption{The lattices of synchrony subspaces for the networks $\NN_1$ and $\NN_2$ on the left and right of Figure~\ref{fig:subN1}, respectively.} \label{t:rec_ex_G1G2}
\end{center}
\end{table}

\begin{table}[!htb]   
\begin{center}
\scalebox{.8}
{
\begin{tabular}{|l|l|}
\hline
\multicolumn{1}{|c}{\Gape[.2cm][.2cm]{\hspace{3in}$V^{nb}_{\N}$}} & \\
\hline 
\hline
\Gape[.1cm][.1cm]
{$P_1 \times P_2$} & $\Delta_{18} = \Delta^1_2 \dot \cap P_2 = \{ {\bf x} :\  x_3 = x_7\}$ \\
$\Delta_1 = P_1  \dot \cap \Delta^2_1 =\{ {\bf x} :\  x_1 = x_5\}$ & $\Delta_{19} = \Delta^1_2 \dot \cap \Delta^2_1 =\{ {\bf x} :\  \ x_1 = x_5, \ x_3 = x_7\}$ \\
$\Delta_2 = P_1  \dot \cap \Delta^2_2 =\{ {\bf x} :\  x_1 = x_6\}$ & $\Delta_{20} = \Delta^1_2 \dot \cap \Delta^2_2 =\{ {\bf x} :\  x_1 = x_6,\ x_3 = x_7\}$ \\
$\Delta_3 = P_1  \dot \cap \Delta^2_3 =\{ {\bf x} :\  x_4 = x_5\}$ & $\Delta_{21} = \Delta^1_2 \dot \cap \Delta^2_3 =\{ {\bf x} :\  x_3 = x_7,\ x_4 = x_5\}$ \\
$\Delta_4 = P_1  \dot \cap \Delta^2_4 =\{ {\bf x} :\  x_1 = x_5 = x_6\}$ & $\Delta_{22} = \Delta^1_2 \dot \cap \Delta^2_4 =\{ {\bf x} :\  x_1 = x_5=x_6,\ x_3 = x_7\}$ \\
$\Delta_5 = P_1  \dot \cap \Delta^2_5 =\{ {\bf x} :\  x_1 = x_4 = x_5\}$ & $\Delta_{23} = \Delta^1_2 \dot \cap \Delta^2_5 =\{ {\bf x} :\  x_1 = x_4=x_5, \ x_3 = x_7\}$ \\
$\Delta_6 = P_1  \dot \cap \Delta^2_6 =\{ {\bf x} :\  x_1 = x_5,\  x_4= x_6\}$ & $\Delta_{24} = \Delta^1_2 \dot \cap \Delta^2_6 =\{ {\bf x} :\  x_1 = x_5, \ x_3 = x_7,\  x_4= x_6\}$ \\
$\Delta_7 = P_1  \dot \cap \Delta^2_7 =\{ {\bf x} :\  x_1 = x_6,\  x_4= x_5\}$ & $\Delta_{25} = \Delta^1_2 \dot \cap \Delta^2_7 =\{ {\bf x} :\  x_1 = x_6,\ x_3 = x_7,\  x_4= x_5\}$ \\
$\Delta_8 = P_1  \dot \cap \Delta^2_8 =\{ {\bf x} :\  x_1 = x_4 = x_5= x_6\}$ & $\Delta_{26} = \Delta^1_2 \dot \cap \Delta^2_8 =\{ {\bf x} :\   x_1 = x_4 = x_5= x_6,\ x_3 = x_7\}$ \\
$\Delta_9 = \Delta^1_1 \dot \cap P_2 = \{ {\bf x} :\  x_2 = x_7\}$ & $\Delta_{27} = \Delta^1_3 \dot \cap P_2 = \{ {\bf x} :\  x_2 = x_3 = x_7\}$ \\
$\Delta_{10} = \Delta^1_1 \dot \cap \Delta^2_1 =\{ {\bf x} :\   x_1 = x_5,\ x_2 = x_7 \}$ & $\Delta_{28} = \Delta^1_3 \dot \cap \Delta^2_1 =\{ {\bf x} :\  x_1 = x_5,\ x_2 = x_3= x_7\}$ \\
$\Delta_{11} = \Delta^1_1 \dot \cap \Delta^2_2 =\{ {\bf x} :\  x_1 = x_6,\ x_2 = x_7\}$ & $\Delta_{29} = \Delta^1_3 \dot \cap \Delta^2_2 =\{ {\bf x} :\  x_1 = x_6,\ x_2 = x_3= x_7\}$ \\
$\Delta_{12} = \Delta^1_1 \dot \cap \Delta^2_3 =\{ {\bf x} :\  x_2 = x_7,\ x_4 = x_5\}$ & $\Delta_{30} = \Delta^1_3 \dot \cap \Delta^2_3 =\{ {\bf x} :\  x_2 = x_3= x_7,\ x_4 = x_5\}$ \\
$\Delta_{13} = \Delta^1_1 \dot \cap \Delta^2_4 =\{ {\bf x} :\  x_1 = x_5=x_6,\ x_2 = x_7 \}$ & $\Delta_{31} = \Delta^1_3 \dot \cap \Delta^2_4 =\{ {\bf x} :\  x_1 = x_5=x_6,\ x_2 = x_3= x_7\}$\\
$\Delta_{14} = \Delta^1_1 \dot \cap \Delta^2_5 =\{ {\bf x} :\  x_1 = x_4=x_5,\ x_2 = x_7 \}$ & $\Delta_{32} = \Delta^1_3 \dot \cap \Delta^2_5 =\{ {\bf x} :\  x_1 = x_4=x_5,\ x_2 = x_3= x_7 \}$ \\
$\Delta_{15} = \Delta^1_1 \dot \cap \Delta^2_6 =\{ {\bf x} :\  x_1 = x_5,\ x_2 = x_7,\  x_4= x_6\}$ & $\Delta_{33} = \Delta^1_3 \dot \cap \Delta^2_6 =\{ {\bf x} :\  x_1 = x_5,\ x_2 = x_3= x_7,\  x_4= x_6\}$\\
$\Delta_{16} = \Delta^1_1 \dot \cap \Delta^2_7 =\{ {\bf x} :\  x_1 = x_6,\ x_2 = x_7,\  x_4= x_5\}$ & $\Delta_{34} = \Delta^1_3 \dot \cap \Delta^2_7 =\{ {\bf x} :\  x_1 = x_6,\ x_2 = x_3= x_7,\  x_4= x_5\}$\\
$\Delta_{17} = \Delta^1_1 \dot \cap \Delta^2_8 =\{ {\bf x} :\  x_1 = x_4 = x_5= x_6,\ x_2 = x_7 \}$ & $\Delta_{35} = \Delta^1_3 \dot \cap \Delta^2_8 =\{ {\bf x} :\   x_1 = x_4 = x_5= x_6,\ x_2 = x_3= x_7 \}$\\
\hline
\end{tabular}
}
\caption{The set $V^{nb}_{\N}$ of the non-bipartite synchrony subspaces for the network in Figure~\ref{fig:subN1}.} \label{t:rec_ex_nb}
\end{center}
\end{table}

\begin{table}[!htb]   
\begin{center}
\scalebox{.8}
{
\begin{tabular}{|l|l|}
\hline
\multicolumn{1}{|c}{\Gape[.2cm][.2cm]{\hspace{1.7in}$V^{pb}_{\N}$}} & \\
\hline 
\hline
\Gape[.1cm][.1cm]
{$\Delta_{36} = \{ {\bf x} :\  x_1 = x_7,\ x_2=x_5\}$} & $\Delta_{38} = \{ {\bf x} :\  x_1 = x_7,\ x_2=x_5,\ x_3=x_6\}$ \\
$\Delta_{37} = \{ {\bf x} :\  x_1=x_2,\ x_5 = x_7\} $ & $\Delta_{39} = \{ {\bf x} :\  x_1=x_2,\ x_3=x_4,\ x_5 = x_7 \}$ \\
\hline
\end{tabular}
}
\caption{The set $V^{pb}_{\N}$ of the pairing bipartite synchrony subspaces for the network in Figure~\ref{fig:subN1}.} \label{t:rec_ex_pb}
\end{center}
\end{table}

\begin{table}[!htb]   
\begin{center}
\scalebox{.8}
{
\begin{tabular}{|l|l|}
\hline
\multicolumn{1}{|c}{\Gape[.2cm][.2cm]{\hspace{3.5in}$V^{npb}_{\N}$}} & \\
\hline 
\hline
\Gape[.1cm][.1cm]
{$\Delta_{40} = \Delta_1 \cap \Delta_{36} = \{ {\bf x} :\  x_1 = x_2 = x_5=x_7\}$} & $\Delta_{56} = \Delta_5 \cap \Delta_{39} = \{ {\bf x} :\  x_1 = x_2 =x_3=x_4 =x_5=x_7\}$ \\
$\Delta_{41} = \Delta_1 \cap \Delta_{38} = \{ {\bf x} :\  x_1 = x_2 = x_5=x_7,\ x_3=x_6\}$ & $\Delta_{57} = \Delta_6 \cap \Delta_{36} = \{ {\bf x} :\  x_1 = x_2 =x_5=x_7,\  x_4=x_6\}$ \\
$\Delta_{42} = \Delta_1 \cap \Delta_{39} = \{ {\bf x} :\  x_1 = x_2 = x_5=x_7,\ x_3=x_4\}$ & $\Delta_{58} = \Delta_6 \cap \Delta_{38} = \{ {\bf x} :\  x_1 = x_2 =x_5=x_7,\  x_3 =x_4=x_6\}$ \\
$\Delta_{43} = \Delta_2 \cap \Delta_{36} = \{ {\bf x} :\  x_1 = x_6 = x_7,\ x_2=x_5\}$ & $\Delta_{59} = \Delta_7 \cap \Delta_{36} = \{ {\bf x} :\  x_1 = x_6 =x_7,\  x_2 =x_4=x_5\}$ \\
$\Delta_{44} = \Delta_2 \cap \Delta_{37} = \{ {\bf x} :\  x_1= x_2=x_6,\ x_5 = x_7 \}$ & $\Delta_{60} = \Delta_7 \cap \Delta_{37} = \{ {\bf x} :\  x_1 =x_2=x_6,\ x_4 = x_5 =x_7 \}$ \\
$\Delta_{45} = \Delta_2 \cap \Delta_{38} = \{ {\bf x} :\  x_1 = x_3 =x_6=x_7,\  x_2= x_5\}$ & $\Delta_{61} = \Delta_7 \cap \Delta_{38} = \{ {\bf x} :\  x_1 = x_3 =x_6 = x_7,\  x_2 =x_4=x_5\}$ \\
$\Delta_{46} = \Delta_2 \cap \Delta_{39} = \{ {\bf x} :\  x_1 =x_2=x_6,\  x_3= x_4,\  x_5 = x_7\}$ & $\Delta_{62} = \Delta_7 \cap \Delta_{39} = \{ {\bf x} :\  x_1 =x_2=x_6,\ x_3 = x_4 =x_5 = x_7 \}$ \\
$\Delta_{47} = \Delta_3 \cap \Delta_{36} = \{ {\bf x} :\  x_1 = x_7,\ x_2 =x_4=x_5\}$ & $\Delta_{63} = \Delta_8 \cap \Delta_{36} = \{ {\bf x} :\  x_1 = x_2 =x_4 = x_5=x_6=x_7\}$ \\
$\Delta_{48} = \Delta_3 \cap \Delta_{37} = \{ {\bf x} :\  x_1=x_2,\ x_4 = x_5 = x_7\}$ & $\Delta_{64} = \Delta_8 \cap \Delta_{38} = \{ {\bf x} :\  x_1 = x_2 = x_3=x_4 = x_5=x_6=x_7\}$ \\
$\Delta_{49} = \Delta_3 \cap \Delta_{38} = \{ {\bf x} :\  x_1 = x_7,\ x_2 = x_4 = x_5,\  x_3=x_6\}$ & $\Delta_{65} = \Delta_{18} \cap \Delta_{36} = \{ {\bf x} :\  x_1  = x_3=x_7,\  x_2=x_5\}$ \\
$\Delta_{50} = \Delta_3 \cap \Delta_{39} = \{ {\bf x} :\ x_1=x_2,\ x_3 = x_4 = x_5 = x_7 \}$ & $\Delta_{66} = \Delta_{18} \cap \Delta_{37} = \{ {\bf x} :\  x_1=x_2,\ x_3  = x_5=x_7 \}$ \\
$\Delta_{51} = \Delta_4 \cap \Delta_{36} = \{ {\bf x} :\  x_1 = x_2 = x_5 = x_6=x_7\}$ & $\Delta_{67} = \Delta_{19} \cap \Delta_{36} = \{ {\bf x} :\  x_1  = x_2=x_3 = x_5=x_7\}$ \\
$\Delta_{52} = \Delta_4 \cap \Delta_{38} = \{ {\bf x} :\  x_1 = x_2 = x_3 = x_5=x_6 = x_7\}$ & $\Delta_{68} = \Delta_{20} \cap \Delta_{37} = \{ {\bf x} :\  x_1 = x_2=x_6,\ x_3  = x_5=x_7  \}$ \\
$\Delta_{53} = \Delta_4 \cap \Delta_{39} = \{ {\bf x} :\  x_1 = x_2 = x_5 = x_6=x_7,\ x_3  = x_4\}$ & $\Delta_{69} = \Delta_{21} \cap \Delta_{36} = \{ {\bf x} :\  x_1  = x_3=x_7,\  x_2 = x_4=x_5\}$ \\
$\Delta_{54} = \Delta_5 \cap \Delta_{36} = \{ {\bf x} :\  x_1 = x_2 = x_4 = x_5=x_7\}$ & $\Delta_{70} = \Delta_{24} \cap \Delta_{36} = \{ {\bf x} :\  x_1 =x_2  = x_3=x_5 = x_7,\   x_4=x_6\}$ \\
$\Delta_{55} = \Delta_5 \cap \Delta_{38} = \{ {\bf x} :\  x_1 = x_2 = x_4 = x_5=x_7,\ x_3 = x_6\}$ & \\
\hline
\end{tabular}
}
\caption{The set $V^{npb}_{\N}$ of the non-pairing bipartite synchrony subspaces for the network in Figure~\ref{fig:subN1}.} \label{t:rec_ex_npb}
\end{center}
\end{table}

\begin{rmk}\rm
If $\Delta_{\bowtie_{nb}} \in V^{nb}_{\NN} \setminus \{ P_1 \times P_2\}$ and $\Delta_{\bowtie_{pb}} \in V^{pb}_{\NN}$ then, since the meet operation in the lattice of synchrony subspaces is the intersection, we have that $\Delta_{\bowtie}  = \Delta_{\bowtie_{nb}}  \cap  \Delta_{\bowtie_{pb}}$ is a synchrony subspace for $\NN$. Clearly, $\Delta_{\bowtie}$ is non-pairing bipartite. We note however that, the set $V^{npb}_{\NN}$ of the non-pairing bipartite synchrony subspaces may contain more subspaces besides these, as the reverse implication is not always true.
For the network in Example~\ref{ex:lattice_union}, it happens that every non-pairing bipartite synchrony subspace is given by the intersection of a non-bipartite synchrony subspace with a pairing bipartite synchrony subspace, check in Table~\ref{t:rec_ex_npb}. We present next an example where that is not true.
\END 
\end{rmk}

\begin{ex}  \normalfont \label{ex:Contra_ex}
Consider the network $\NN$ given by the disjoint union of the two 1-input regular coupled cell networks in Figure~\ref{fig:Contra_ex}. The non-pairing bipartite synchrony subspace for $\NN$ defined by the coordinate equality conditions $x_1=x_6=x_7$, $x_2=x_5=x_8$ and $x_3 = x_9$ is not given by the intersection of a non-bipartite and a pairing bipartite synchrony subspaces for $\NN$.
\END
\end{ex}

\begin{figure}[ht!]
\begin{center}
\vspace{-4mm}
\hspace{-4mm}
	{\small \begin{tikzpicture}[->,>=stealth',shorten >=1pt,auto, node distance=1.5cm, thick,node/.style={circle,draw}]
	      \node[node]	(1) at (-75mm, 4cm)  [fill=white] {$7$};
			\node[node]	(2) at (-5cm, 4cm)  [fill=white] {$2$};
			\node[node]	(3) at (-5cm, 2cm)  [fill=white] {$3$};
			\node[node]	(4) at (-25mm, 4cm)  [fill=white] {$1$};			
			\node[node]	(5) at (0cm, 4cm)  [fill=white] {$4$};			
			\node[node]	(6) at (0cm, 2cm)  [fill=white] {$6$};
			\node[node]	(7) at (-25mm, 2cm)  [fill=white] {$5$};
			\node[node]	(8) at (25mm, 2cm)  [fill=white] {$8$};
			\node[node]	(9) at (5cm, 2cm)  [fill=white] {$9$};
	
                                                   \path
				(2) edge [out=160, in=20] node {} (1)
				(1) edge [out=340, in=200] node {} (2)
				(2) edge node {} (3)
				
				(4) edge  node {} (5)
				(4) edge [out=240, in=120] node {} (7)
				(7) edge [out=60, in=300] node {} (4)
								(7) edge  node {} (6)
				 (6) edge  node {} (8) 
				 (8) edge  node {} (9) 
				;
		\end{tikzpicture}}
		\caption{Disjoint union of two 1-input regular coupled cell networks considered in Example~\ref{ex:Contra_ex}.}
		\label{fig:Contra_ex}
		\end{center}
\end{figure}
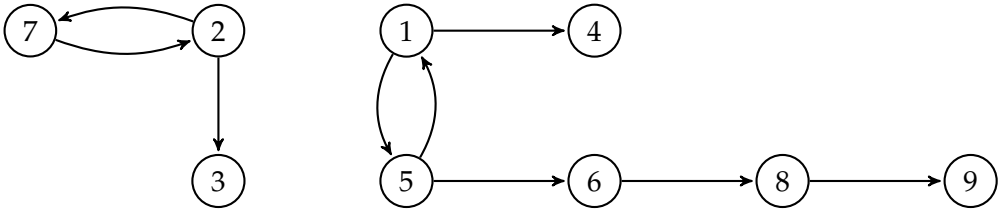

\begin{rmk}\label{rmk:join}\rm 
Consider two disjoint networks $\NN_1$ and $\NN_2$, that is, with no cell and no connection in common, with the same cell and edge-type. Following the usual definition of join of graphs, the join network $\NN = \NN_1 * \NN_2$ is given by the union of $\NN_1$ and $\NN_2$ together with additional edges from every cell of $\NN_1$ to every cell of $\NN_2$ and vice-versa. 
We have then that the results in Theorem~\ref{thm:main_join_same} $(i), (ii)$ and $(iii)$ for the union of two networks are also valid for the join. Moreover, if $n_i$ is the number of cells of $\NN_i$, $i=1,2$, in the join network  $\NN_1 * \NN_2$ the valency of the cells of $\NN_1$ is $v_1 +n_2$ and that of the cells of $\NN_2$ is $v_2 + n_1$. We have then for the join: (a) if $v_1 + n_2 \ne v_2 + n_1$ then $V_{\N} = V^{nb}_{\NN}$. (b) If $v_1 +n_2= v_2 + n_1$ then $V_{\N} = V^{nb}_{\NN} \cup V^{pb}_{\NN} \cup V^{npb}_{\NN}.$ 
(ii) Given two identical-edge networks $\NN_1$ and $\NN_2$ with the same edge-type the lattice of synchrony subspaces for the disjoint union $\NN_1 + \NN_2$ and the lattice of synchrony subspaces for the join $\NN_1 * \NN_2$ are different, in general.
 In the case of the networks $\NN_1$ and $\NN_2$ on the left and right of Figure~\ref{fig:subN1}, respectively, the lattice of synchrony subspaces for the disjoint union $\NN_1 + \NN_2$ is formed by the 71 synchrony subspaces given in the previous example. Nevertheless, the lattice of the synchrony subspaces for the join $\NN_1 * \NN_2$ is formed only by the non-bipartite synchrony subspaces in Table~\ref{t:rec_ex_nb}. Since network $\NN_1$ has 3 cells and network $\NN_2$ has 4 cells, in the join network  $\NN_1 * \NN_2$ the valency of the cells of $\NN_1$ is $5$ and that of the cells of $\NN_2$ is $4$. 
 It follows then that there are no bipartite synchrony subspaces for the join $\NN_1 * \NN_2$ .
(iii) Given two identical-edge networks $\NN_1$ and $\NN_2$ with the same edge-type the lattice of synchrony subspaces for the disjoint union $\NN_1 + \NN_2$ and the lattice of synchrony subspaces for the join $\NN_1 * \NN_2$ coincide if and only if $v_1 = v_2$ and $n_1 = n_2$ or $v_1 \ne v_2$ and $v_1 + n_2 \ne v_2 + n_1$.
 \END 
\end{rmk}

%%%
\section{Lattice of synchrony subspaces for 1-input regular coupled cell networks} \label{sec:lattice_1_regular} 
%%%

In this section we characterize the set of join-irreducible elements for the lattice of synchrony subspaces of a 1-input regular coupled cell network by describing its elements and indicating its cardinality. This set is a join-dense set for the lattice. Each join-irreducible synchrony subspace is identified by a basis of eigenvectors or generalized eigenvectors. We also describe the possible patterns of balanced colourings associated to the join-irreducible elements and conclude about the possible patterns of balanced colourings associated to the other synchrony subspaces in the lattice.

For an identical-edge coupled cell network $\NN$ with adjacency matrix $A$, assuming the cell phase spaces to be $\RR$, it follows from Remark~\ref{rmk:moreonlinear}  that a polydiagonal subspace is a synchrony subspace for $\NN$ if and only if it is left invariant by the matrix $A$. As in Example~\ref{ex:sublattice}, the set of the $A$-invariant subspaces is a lattice where the partial order is the inclusion and the meet and join operations are the intersection and sum, respectively. 

Although, the set of synchrony subspaces for $\NN$ is a lattice, and a subset of the set of the $A$-invariant subspaces, it is not in general a sublattice of the lattice of the $A$-invariant subspaces, as shown by Stewart~\cite{S07}. The meet operation is the same, the intersection of subspaces, but the join of two synchrony subspaces may not be given by their sum. The join of two synchrony subspaces is given by their sum only when this is a polydiagonal subspace. Note that the sum of two synchrony subspaces is always $A$-invariant but it may not be a polydiagonal subspace.  Apparently, there is no general form for the join operation in the lattice of synchrony subspaces. It follows then that, in general, it is not possible to define the join-irreducible set for the lattice of synchrony subspaces and obtain the lattice through that join-dense set.

The situation changes when one considers networks with asymmetric inputs. For this particular type of networks, as shown in Aguiar~\etal~\cite{AADF11}, the set of synchrony subspaces is closed under the sum operation.

\begin{thm}[\cite{AADF11} Theorem 3.8] \label{thm:AADF}
Let $\NN$ be a coupled cell network with asymmetric inputs and $\Delta_1$ and $\Delta_2$ two distinct synchrony subspaces for $\NN$. Then the sum $\Delta_1 + \Delta_2$ is a synchrony subspace for $\NN$.
\end{thm}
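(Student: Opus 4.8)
The plan is to reduce the statement to a purely linear-algebraic claim and then exploit the functional nature of asymmetric inputs. By Remark~\ref{rmk:moreonlinear}, a polydiagonal of a homogeneous network is a synchrony subspace precisely when it is left invariant by every adjacency matrix $A_l$. Since $\Delta_1$ and $\Delta_2$ are synchrony subspaces, each is $A_l$-invariant for every $l$, and the sum of two $A_l$-invariant subspaces is again $A_l$-invariant. Hence $\Delta_1+\Delta_2$ is automatically invariant under all the adjacency matrices, and the whole theorem reduces to the single assertion that $\Delta_1+\Delta_2$ is a \emph{polydiagonal} subspace: once this is known, Remark~\ref{rmk:moreonlinear} immediately upgrades it to a synchrony subspace, and, being the smallest invariant polydiagonal containing both summands, it is also their join.

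First I would translate everything into balanced colourings. Writing $\bowtie_1,\bowtie_2$ for the balanced colourings of $\Delta_1,\Delta_2$, the asymmetric hypothesis means that for each edge-type the inputs are described by a single map $g_l$ on cells, so that a colouring $\bowtie$ is balanced exactly when $c\bowtie d$ implies $g_l(c)\bowtie g_l(d)$ for all $l$. A first useful consequence, which I would record explicitly, is that the common refinement $\bowtie_1\wedge\bowtie_2$ (whose blocks are the nonempty intersections of a $\bowtie_1$-block with a $\bowtie_2$-block) is again balanced, since $c\,(\bowtie_1\wedge\bowtie_2)\,d$ forces $g_l(c)\bowtie_i g_l(d)$ for $i=1,2$. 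This already identifies the join of $\Delta_1$ and $\Delta_2$ in the synchrony lattice as $\Delta_{\bowtie_1\wedge\bowtie_2}$; the content of the theorem is therefore the equality $\Delta_1+\Delta_2=\Delta_{\bowtie_1\wedge\bowtie_2}$, of which the inclusion $\subseteq$ is free.

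The concrete engine I would use is the annihilator. A subspace is a polydiagonal if and only if its annihilator is spanned by coordinate-difference covectors $e_c^{*}-e_d^{*}$, and $(\Delta_1+\Delta_2)^{\perp}=\Delta_1^{\perp}\cap\Delta_2^{\perp}$, where each $\Delta_i^{\perp}$ is spanned by such differences and is $A_l^{\top}$-invariant. The decisive structural feature of asymmetric inputs enters here: because every cell has a single type-$l$ input, $A_l^{\top}$ sends each coordinate covector $e_c^{*}$ to another coordinate covector $e_{g_l(c)}^{*}$, hence carries coordinate differences to coordinate differences (or to zero). I would use this closure property, together with the $A_l^{\top}$-invariance of $\Delta_1^{\perp}\cap\Delta_2^{\perp}$, to propagate coordinate equalities along the edge-maps $g_l$ and show that every covector in the intersection is a combination of differences $e_c^{*}-e_d^{*}$.

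The hard part is exactly this last step: ruling out that $\Delta_1^{\perp}\cap\Delta_2^{\perp}$ contains a genuinely non-coordinate covector, i.e.\ proving the reverse inclusion $\Delta_{\bowtie_1\wedge\bowtie_2}\subseteq\Delta_1+\Delta_2$. This is the only place where the asymmetric hypothesis is indispensable, and it is what distinguishes the present situation from the general case: for networks with input multiplicities the intersection can acquire covectors not spanned by coordinate differences, which is precisely why $V(\NN)$ need not be a sublattice of $I(\NN)$ in Stewart's examples. Concretely, I would settle it by a rank bookkeeping establishing $\dim(\Delta_1+\Delta_2)=|\bowtie_1\wedge\bowtie_2|$, equivalently the modularity identity $|\bowtie_1|+|\bowtie_2|=|\bowtie_1\vee\bowtie_2|+|\bowtie_1\wedge\bowtie_2|$ for the block-counts, and I expect the functional structure of the maps $g_l$ to be exactly what forces this identity for balanced colourings.
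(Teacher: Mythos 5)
Your first half is correct, and it is essentially the paper's own core argument in dual form: the paper likewise isolates the polydiagonal defined by the equalities common to $\Delta_1$ and $\Delta_2$ (your $\Delta_{\bowtie_1\wedge\bowtie_2}$, the paper's $\Delta_E$) and uses the fact that each row of an adjacency matrix of an asymmetric-input network has a single entry $1$ (your maps $g_l$) to show that this polydiagonal is invariant under all adjacency matrices, hence a synchrony subspace. You are also right that $\Delta_1+\Delta_2\subseteq\Delta_{\bowtie_1\wedge\bowtie_2}$ is free, so that everything hinges on the reverse inclusion.

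The genuine gap is precisely the step you defer and never prove: the reverse inclusion, equivalently your block-count identity $|\bowtie_1|+|\bowtie_2|=|\bowtie_1\vee\bowtie_2|+|\bowtie_1\wedge\bowtie_2|$. No bookkeeping can establish it, because it is false for networks with asymmetric inputs. Take $\NN$ to be the directed $6$-cycle $1\to 2\to\cdots\to 6\to 1$ (a $1$-input regular network), with the two balanced colourings having classes $\{1,3,5\},\{2,4,6\}$ and $\{1,4\},\{2,5\},\{3,6\}$, i.e.\ $\Delta_1=\{x:\ x_1=x_3=x_5,\ x_2=x_4=x_6\}$ and $\Delta_2=\{x:\ x_1=x_4,\ x_2=x_5,\ x_3=x_6\}$. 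They share no equality, so $\bowtie_1\wedge\bowtie_2$ is the trivial colouring and $\Delta_{\bowtie_1\wedge\bowtie_2}=\RR^6$, while $\dim(\Delta_1+\Delta_2)=2+3-1=4$; in your notation, $2+3\neq 1+6$. Concretely, $\Delta_1+\Delta_2=\{x:\ x_1+x_2=x_4+x_5,\ x_2+x_3=x_5+x_6\}$, which is not a polydiagonal: its annihilator contains no covector of the form $e_c^{*}-e_d^{*}$ (note also that your later criterion, ``every covector in the intersection is a combination of differences'', is vacuous, since every sum-zero covector is such a combination; the correct criterion is that the annihilator be spanned by the differences it actually contains). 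One can check directly that the admissible vector field with $f(x,y)=y^2$ maps the point $(0,1,1,1,0,2)\in\Delta_1+\Delta_2$ to $(4,0,1,1,1,0)\notin\Delta_1+\Delta_2$. So the statement with the vector-space sum is false as it stands; what your argument genuinely proves is the correct weaker statement that $\Delta_{\bowtie_1\wedge\bowtie_2}$ is a synchrony subspace, i.e.\ the join of $\Delta_1$ and $\Delta_2$ in $V(\NN)$ is the polydiagonal spanned by their common equalities, which need not equal their sum. You should be aware that the paper's own proof stumbles at exactly the same spot: the assertion that $E=\emptyset$ forces $\Delta_1+\Delta_2$ to be the total phase space, and the unproved implication ``$\Delta_E$ is a synchrony subspace $\Rightarrow\ \Delta_1+\Delta_2=\Delta_E$'', are the same claim you could not close, and the example above shows it cannot be closed. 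Your write-up, which honestly localises the difficulty, is in that respect the more careful of the two.
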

\begin{proof}
For completeness, we include here a proof that follows the same key ideas of the proof in \cite{AADF11} but that makes use of the definitions and concepts that are used in the scope of this work.

Let $\Delta_1$ and $\Delta_2 $ be two distinct synchrony subspaces for $\NN$. Consider the set  $E$ of the coordinate equality conditions that appear in the definition of both $\Delta_1$ and $\Delta_2$. If $E = \emptyset$ then $\Delta_1 + \Delta_2$ is the total phase space and there is nothing to prove. Otherwise, consider the polydiagonal subspace  $\Delta_{E}$ defined by the set $E$ of coordinate equality conditions. We have $\Delta_1 + \Delta_2 \subseteq \Delta_{E}$. Moreover, $\Delta_1 + \Delta_2$ is a polydiagonal, and thus, a synchrony subspace, if and only if $\Delta_1 + \Delta_2 = \Delta_{E}$.  This happens if and only if $\Delta_{E}$ is a synchrony subspace.

By Corollary~\ref{cor:ADlinear} and Remark~\ref{rmk:moreonlinear}, the polydiagonal $\Delta_{E}$ is a synchrony subspace for $\NN$ if and only if it is left invariant by each of the network adjacency matrices, one per each edge-type. Let $A$ be any of the adjacency matrices for $\NN$. To show that $\Delta_{E}$ is left invariant by $A$ we have to show that for any vector $u$ in $\Delta_{E}$ the vector $A u$ is in $\Delta_{E}$. Since the network is asymmetric, each row of $A$ has only one nonzero entry, which is equal to $1$. Thus, each coordinate of $A u$ is equal to a coordinate of $u$. 
For each coordinate equality condition that defines $\Delta_{E}$, either $Au$ satisfies trivially that condition or satisfies it if and only if $u$ satisfies a certain coordinate equality condition $C$. Since $\Delta_1$ and $\Delta_2 $ are synchrony subspaces for $\NN$, and thus invariant by the matrix $A$, we conclude that $C$ is a coordinate equality condition in the definition of $\Delta_1$ and $\Delta_2$, and thus, in the definition of $\Delta_{E}$. It follows then that every $u \in \Delta_{E}$ satisfies the condition $C$. 
These arguments prove that $\Delta_{E}$ is $A$-invariant by each adjacency matrix $A$ of $\NN$ and, thus, is a synchrony subspace. We conclude then that $\Delta_1 + \Delta_2 = \Delta_{E}$ and, thus, is a synchrony subspace.
\qed 
\end{proof}

\vspace{0.1in}

Remember that a sublattice of a lattice $L$ is a nonempty subset of $L$ that is a lattice with the same meet and join operations as $L$. 
It follows then that for a coupled cell network with asymmetric inputs and adjacency matrices $A_{1}, \ldots, A_k$, for $l=1,\ldots,k$ the different edge-types, the lattice of synchrony subspaces is a sublattice of the lattice of the subspaces that are invariant by the $k$ matrices $A_{1}, \ldots, A_k$, with the meet operation given by the intersection and the join operation given by the sum. In particular, for 1-input regular coupled cell networks we have the following corollary. 

\begin{cor} \label{cor:sublattice}
Let $\NN$ be a 1-input regular coupled cell network with adjacency matrix $A$. The lattice of synchrony subspaces for $\NN$ is a sublattice of the lattice of the $A$-invariant subspaces.
\end{cor}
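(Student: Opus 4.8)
The plan is to verify directly that the set of synchrony subspaces of $\NN$ meets the definition of a sublattice of the lattice of $A$-invariant subspaces, namely that it is nonempty and closed under the meet and join operations of the ambient lattice. First I would observe that a $1$-input regular coupled cell network is precisely a network with asymmetric inputs possessing a single adjacency matrix $A$: it is regular (one edge-type) and each cell receives exactly one input. Consequently Theorem~\ref{thm:AADF} applies to $\NN$.

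Next, by Remark~\ref{rmk:moreonlinear} (choosing the cell phase spaces to be $\RR$), a polydiagonal subspace is a synchrony subspace for $\NN$ if and only if it is left invariant by $A$. Hence the set of synchrony subspaces is a subset of the lattice of $A$-invariant subspaces described in Example~\ref{ex:sublattice}, and it is nonempty since it contains both the fully synchronous subspace and the total phase space.

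It then remains to check closure under the two operations of the ambient lattice, where, by Example~\ref{ex:sublattice}, the meet is the intersection and the join is the sum. For the meet: the intersection of two polydiagonals is again a polydiagonal, and the intersection of two $A$-invariant subspaces is $A$-invariant, so the intersection of two synchrony subspaces is a polydiagonal that is $A$-invariant, and hence a synchrony subspace. For the join: Theorem~\ref{thm:AADF} gives directly that the sum of two synchrony subspaces for $\NN$ is a synchrony subspace. Since both operations stay inside the set of synchrony subspaces, that set is a sublattice.

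The one point deserving care---and really the only content beyond Theorem~\ref{thm:AADF}---is to confirm that the join computed inside the lattice of synchrony subspaces coincides with the sum (the join of the ambient lattice), rather than merely being contained in it. This follows because the sum of two synchrony subspaces is the smallest $A$-invariant subspace containing both; once Theorem~\ref{thm:AADF} tells us this sum is itself a synchrony subspace, it is a fortiori the smallest synchrony subspace containing both, i.e.\ their join in the synchrony lattice. Thus the meet and join operations genuinely agree with those of the ambient lattice, and no further obstacle arises: the substantive work was already carried out in the proof of Theorem~\ref{thm:AADF}.
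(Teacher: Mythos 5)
Your proof is correct and follows essentially the same route as the paper, which deduces the corollary directly from Theorem~\ref{thm:AADF} (closure of synchrony subspaces under sums for networks with asymmetric inputs) together with closure under intersection, exactly as you do. Your extra care in verifying that the join in the synchrony lattice coincides with the ambient sum is a sound elaboration of a point the paper leaves implicit, but it does not constitute a different approach.
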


We have then that, the join operation for the lattice of synchrony subspaces of a 1-input regular coupled cell network is well defined, and is given by the sum. Moreover, since we are considering finite networks, by Theorem~\ref{thm:join-dense}, the set of join-irreducible synchrony subspaces is join-dense for that lattice. In the next section we identify the set of join-irreducible elements for the lattice of synchrony subspaces of a 1-input regular coupled cell network .

\subsection{Join-irreducible set} 
Let $\NN$ be a 1-input regular coupled cell network and $A$ the corresponding adjacency matrix. By Corollary~\ref{cor:sublattice}, the lattice of synchrony subspaces for $\NN$ is a sublattice of the lattice of the $A$-invariant subspaces. Thus, the elements in the lattice $V_{\N}$ are the $A$-invariant subspaces that are defined by equalities of cell coordinates. From those, the join-irreducible elements are the ones that cannot be given by the sum of other elements in the lattice. 

The $A$-invariant subspaces are the subspaces generated by any number of eigenvectors, any number of Jordan chains or any number of eigenvectors and Jordan chains of $A$. We start then by characterizing the eigenvectors and Jordan chains of the adjacency matrix $A$ of a 1-input regular coupled cell network.

Let $\NN$ be an $n$-cell 1-input regular coupled cell network with a $m$-cell ring, with $1 \le m \le n$. The cells of $\NN$ can be enumerated such that the corresponding adjacency matrix $A$ has the following block form:
\begin{equation} \label{eq:matrix}
A =
\left[
\begin{array}{ll}
C_{m \times m} & 0_{m \times (n-m)} \\
B_{(n-m) \times m} & L_{(n-m) \times (n-m)}
\end{array}
\right],
\end{equation}
where $C$ is a cyclic permutation matrix with rows and columns indexed by the cells of the ring and $L$ is a lower triangular matrix. 

In what follows, we use the notation $e_j$, for $j=1,\ldots,n$, to represent the vector with the $j$-th entry  equal to $1$ and all the other entries equal to zero. 

\begin{lem} \label{lem:eigen}     
Let $\NN$ be a 1-input regular coupled cell network with $n$ cells $\{c_1, \ldots, c_n \}$ and a $m$-cell ring, with $1 \le m \le n$. Assume that the cells of $\NN$ are enumerated such that the corresponding adjacency matrix $A$ has the block structure (\ref{eq:matrix}). We have:

\begin{itemize}
\item[(i)] The set of eigenvalues of $A$ include the $m$ roots of the unity $\omega_j=e^{\frac{2\pi i j}{m}}$, $j=0,1,\dots, m-1$. The eigenspace associated to each eigenvalue $\omega_j$ is generated by the eigenvector $v_j=(1,\omega_j,\omega_j^2,\dots, \omega_j^{m-1}, v_{j,m+1}, \ldots, v_{j,n})$, where, for all $i \in \{m+1, \ldots, n \}$, we have $v_{j,i} = \omega_j^{k}$ for some $k \in \{0, \ldots, m-1\}$.

\item[(ii)] If $depth(\NN) =0$, that is, if $m = n$ there are no more eigenvalues for $A$. If $depth(\NN) \ne 0$, the other eigenvalue of $A$ is zero with algebraic multiplicity $n-m$.
The associated eigenspace has dimension $t$, with $t$ the number of tails of $\NN$, and a basis $(e_{i_1},\ldots,e_{i_t})$, with $i_j \in \{m+1, \ldots, n\}$, for $j \in {1,\ldots,t}$, such that $c_{i_j}$ is the leaf of the tail $j$.

\item[(iii)] If $depth(\NN) > 1$, the adjacency matrix $A$ is non semi-simple and the Jordan chains associated to zero can be obtained directly from the subtrees of the directed rooted trees of $\NN$. Let $S_{\T}$ be the set of the subtrees of the trees of $\NN$.  A chain of vectors $(\tilde u_s, \tilde u_{s-1}, \ldots, \tilde u_{1})$ is a Jordan chain associated to the eigenvalue zero if and only if there is a non-empty subset $J$ of $S_{\T}$, with $s= max \{ depth(\BB)|\ \BB \in J \} > 1$, such that, 
for each subtree $\BB_i \in J$, if we consider the sequence of vectors 
$\tilde u^{i}_1, \ldots, \tilde u^{i}_{s_{i}}, \ldots, \tilde u^{i}_{s}$, with $s_{i}=depth(\BB_i)$, there is $\alpha_i \in \RR$ such that $\tilde u^{i}_1 = \alpha_i e^{i}_{i^1_1}$, with $c^{i}_{i^1_1}$ the root cell of $\BB_i$, $\tilde u^{i}_j = \alpha_i \left( e^{i}_{i^j_{1}} + \cdots +  e^{i}_{i^j_{k}} \right)$, for $1< j \le s_{i}$, with  $c^{i}_{i^j_{l}}$, $1 \le l \le k$, the cells of the subtree $\BB_i$ such that $|c^{i}_{i^1_1}, c^{i}_{i^j_{l}}| = j-1$ and $u^{i}_{s_{i}+1}, \ldots, \tilde u^{i}_{s} =0$, then $ \tilde u_j =\sum_{\BB_i \in J} \tilde u^{i}_j$, $1 \le j \le s$.

 \end{itemize}
 \end{lem}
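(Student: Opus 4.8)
The plan is to exploit the block lower-triangular shape (\ref{eq:matrix}) throughout, treating the ring block $C$ and the tree block $L$ separately and then gluing. For (i), since $A$ is block lower-triangular its spectrum is the union of the spectra of $C$ and $L$; the block $C$ is the cyclic permutation matrix of the $m$-cell ring, whose eigenvalues are exactly the $m$-th roots of unity $\omega_j$ and whose eigenvectors are the Vandermonde vectors $(1,\omega_j,\dots,\omega_j^{m-1})$ (up to the orientation of the ring). To promote such a ring-eigenvector to an eigenvector $v_j$ of the full matrix I would solve the lower-block equation $B\,v^{\mathrm{top}}+L\,v^{\mathrm{bot}}=\omega_j v^{\mathrm{bot}}$, i.e. $(\omega_j I-L)v^{\mathrm{bot}}=B\,v^{\mathrm{top}}$. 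Because each non-ring cell has a single parent which is strictly earlier in the chosen enumeration and is never the cell itself, $L$ is strictly lower triangular, hence nilpotent; since $\omega_j\neq 0$ the matrix $\omega_j I-L$ is invertible and $v^{\mathrm{bot}}$ is uniquely determined. The claim that every non-ring coordinate is a power of $\omega_j$ then follows by reading the eigenvalue equation row by row: for a non-ring cell $i$ with unique parent $\pi(i)$ one has $(Av_j)_i=v_{j,\pi(i)}=\omega_j v_{j,i}$, so $v_{j,i}=\omega_j^{-1}v_{j,\pi(i)}$; iterating back along the unique directed path to the ring gives $v_{j,i}=\omega_j^{-\mathrm{dist}(i)}$ times the (Vandermonde) value at its ring-ancestor, and since $\omega_j^m=1$ this is $\omega_j^{k}$ for some $k\in\{0,\dots,m-1\}$.

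For (ii) the characteristic polynomial factors as $\det(\lambda I-C)\det(\lambda I-L)=(\lambda^m-1)\lambda^{\,n-m}$, so when $m=n$ the only eigenvalues are the roots of unity, and when $m<n$ the value $0$ is an eigenvalue of algebraic multiplicity exactly $n-m$ (the roots of unity being nonzero). For the eigenspace I would compute $\ker A$ directly: writing $(Av)_i=v_{\pi(i)}$, the equation $Av=0$ forces $v_c=0$ for every cell $c$ that is a parent of some cell, that is, for every non-leaf, while the leaf coordinates stay free. Since the leaves are exactly the endpoints of the $t$ tails and every ring cell has an outgoing edge (so is never a leaf), $\ker A$ has the stated basis $(e_{i_1},\dots,e_{i_t})$ and dimension $t$.

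For (iii) the key structural observation is that the generalised eigenspace of $A$ for the eigenvalue $0$ is precisely the span of $\{e_c:\ c\text{ non-ring}\}$: its dimension $n-m$ matches the algebraic multiplicity from (ii) and $A$ is nilpotent on it, and there $A$ acts as the ``children-sum'' operator $N$, $Ne_c=\sum_{c'\text{ child of }c}e_{c'}$. The engine of the whole part is the level-shift identity: for a subtree $\BB$, if $w^{\BB}_d$ denotes the sum of the basis vectors over the cells at distance $d$ from the root of $\BB$, then $N w^{\BB}_d=w^{\BB}_{d+1}$, because in a tree every cell at distance $d+1$ has a unique parent at distance $d$. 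Thus each subtree produces, by repeatedly applying $N$ to its root-vector, a chain of level-vectors terminating once the deepest level is exceeded. Building a Jordan chain from the top downward then amounts to choosing a top vector $\tilde u_1=\sum_{\BB_i\in J}\alpha_i\,e_{\mathrm{root}(\BB_i)}$ and setting $\tilde u_j=N^{\,j-1}\tilde u_1$, which by the level-shift identity is exactly the scaled sum $\sum_i\tilde u_j^{\,i}$ of level-$(j-1)$ vectors in the statement, the chain terminating one step past the maximal depth occurring in $J$ (this fixes $s$). For the ``only if'' direction I would use that every non-ring basis vector is itself the root-vector of a subtree, so an arbitrary generalised eigenvector is automatically of the form $\sum\alpha_i e_{\mathrm{root}(\BB_i)}$ and its rank under $N$ is read off from the maximal depth in $J$. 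Finally, non-semisimplicity for $depth(\NN)>1$ follows from the multiplicity gap: depth $>1$ forces a non-leaf tree cell, so $n-m>t$, and the algebraic multiplicity of $0$ strictly exceeds its geometric multiplicity $t$.

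The main obstacle is (iii). The level-shift identity itself is clean, but turning it into an exact ``if and only if'' for Jordan chains requires careful bookkeeping: aligning chains coming from subtrees of different depths (the common top index $s$ and the padding by zeros $\tilde u^{i}_{s_i+1}=\cdots=\tilde u^{i}_{s}=0$), matching the distance/depth indexing against the true chain length, and controlling the fact that subtrees may be nested so that their level-vectors share cells and could in principle cancel. I expect the safest route is to fix a Jordan basis for $N$ adapted to the forest of non-ring cells — organising cells by distance to the ring and by the subtree they belong to — and to confirm that the level-vector chains span the correct cyclic subspaces by a dimension count against the partition of $n-m$ by the block sizes, rather than by manipulating individual chains one at a time.
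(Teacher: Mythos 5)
Your overall route is the same as the paper's: read the spectrum off the block-triangular form (\ref{eq:matrix}), get the $m$-th roots of unity from the circulant ring block $C$, compute $\ker A$ from the single-parent structure, and build the Jordan chains for $0$ from the ``children-sum'' operator; your level-shift identity $Nw^{\BB}_d=w^{\BB}_{d+1}$ is exactly the paper's computation $A\left(\sum_{c\in C_{j-1}}e_c\right)=\sum_{c\in C_j}e_c$. In (i) and (ii) you are actually more complete than the paper: the paper only asserts the form of $v_j$ and gets the kernel dimension from a column-rank count, while you solve $(\omega_j I-L)v^{\mathrm{bot}}=Bv^{\mathrm{top}}$ via nilpotency of $L$ (which in addition proves that each eigenspace $E_{\omega_j}$ is one-dimensional, i.e.\ the word ``generated'' in (i)), and you obtain $\ker A=\langle e_{i_1},\ldots,e_{i_t}\rangle$ by a direct row-by-row computation.

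The ``main obstacle'' you flag in (iii) is real, but it is a defect of the statement, not of your argument, and you should not try to force your construction to match the written indexing. Your count is the correct one: the chain generated by $\tilde u_1=\sum_{\BB_i\in J}\alpha_i e_{\mathrm{root}(\BB_i)}$ has length $\max\{depth(\BB_i)\}+1$, and its vectors include the deepest level (distance $s_i$ from the root) of each subtree. The statement instead stops each subtree at distance $s_i-1$ and sets $s=\max\{depth(\BB)\mid \BB\in J\}$; with that truncation $A\tilde u_s\neq 0$, because the cells at distance $s-1$ from the root of a depth-$s$ subtree are not leaves, so $(\tilde u_s,\ldots,\tilde u_1)$ is not a Jordan chain. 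The paper's own proof goes wrong at exactly this point (it asserts that ``the cells in $C_s$ are leafs''), and the paper's own example contradicts the written statement while confirming your count: in Table~\ref{tab:irredu}, the subspace $\Delta_7$ is built from the chain spanned by $e_5,e_6$ for the network of Figure~\ref{fig:subN2}, a length-$2$ chain coming from the single subtree $\{5,6\}$ of depth $1$, which the statement's condition $s=\max\{depth(\BB)\}>1$ would exclude. So prove the corrected statement (levels down to distance $s_i$ in each subtree, chain length $\max\{depth(\BB_i)\}+1$, hypothesis $\max\{depth(\BB_i)\}\geq 1$). Finally, the cancellation issue you postpone to a Jordan-basis argument can be settled directly, without fixing a Jordan basis: two subtrees are either disjoint or nested, and nesting strictly decreases depth; hence, among the subtrees in $J$ of maximal depth $s^*$, the distance-$s^*$ level sets are pairwise disjoint and receive no contribution from the remaining subtrees, so $N^{s^*}\tilde u_1\neq 0$ and no cancellation can ever shorten the chain below length $s^*+1$.
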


\begin{proof}
It follows from the lower triangular block form (\ref{eq:matrix}) that the eigenvalues of $A$ are given by the eigenvalues of $C$ and the eigenvalues of $L$.

The cyclic permutation matrix $C$ is a special kind of a circulant matrix that can be written as $circ(0,1,0,\dots, 0)$ for a shorthand. The eigenvalues of $C$ are the $m$ roots of the unity 
\begin{equation}\label{eq:lamj}
\omega_j=e^{\frac{2\pi i j}{m}},\,\, j=0,1,\dots, m-1.
\end{equation}
The lower triangular matrix $L$ has zeros at the diagonal and therefore only the zero eigenvalue. We have then that besides the $m$ roots of the unity, the other eigenvalue of $A$ is zero with algebraic multiplicity $n-m$.

The eigenvectors of $C$ associated to the eigenvalues $\omega_j$, for $ j=0,1,\dots, m-1$ are given by
\begin{equation}\label{eq:vj}
(1,\omega_j,\omega_j^2,\dots, \omega_j^{m-1})
\end{equation}
and thus, taking into account the entries of the submatrices $B$ and $L$, the corresponding eigenvectors for $A$ are given by
\begin{equation}\label{eq:vjMG}
v_j=(1,\omega_j,\omega_j^2,\dots, \omega_j^{m-1}, v_{j,m+1}, \ldots, v_{j,n}),
\end{equation}
where, for all $i \in \{m+1, \ldots, n \}$, we have $v_{j,i} = \omega_j^{k}$ for some $k \in \{0, \ldots, m-1\}$. This finishes the proof of (i).

The first $m$ columns of the matrix $A$, that correspond to the cells in the ring, are linearly independent, as the matrix $C$ has nonzero determinant. Let $t$ be the number of tails of $\NN$ and $c_{i_j}$, with $i_j \in \{m+1, \ldots, n\}$, for $j \in {1,\ldots,t}$, the leaf cells of the tails. The columns of $A$ corresponding to the leaf cells $c_{i_j}$, for $j \in {1,\ldots,t}$, have all entries equal to zero. The remaining $n-m-t$ columns of $A$, corresponding to the other cells in the tails, are linearly independent, as each of them has at least one nonzero entry (equal to $1$) and when one entry is nonzero for one of the columns it is zero for the other ones. Moreover, as they have the first $m$ coordinates equal to zero, they are linearly independent with the first $m$ columns of $A$. We have then that the eigenspace associated to zero has dimension $t$ and, for example, the basis $(e_{i_1},\ldots,e_{i_t})$. This finishes the proof of (ii).

Let $J$ be a non-empty subset of the set $S_{\T}$, of the subtrees of the trees of $\NN$, such that $s= max \{ depth(\BB)|\ \BB \in J \} > 1$. Let $C_1= \cup_{i=1}^{p_1}C^i_1$, with $C^i_1 = \{c^{i}_{i^1_1} \} \subset \{m+1, \ldots, n\}$, be the subset of the root cells of the subtrees in $J$, with $p_1 =\# J$.
For $2 \le j \le s$, let $C^{i}_{j} = \{ c^{i}_{i^j_{1}}, \ldots, c^{i}_{i^j_{k_{ij}}} \} \subset \{m+1, \ldots, n\}$ be the subset of cells of the subtree $\BB_i$ that receive an input from a cell in $C^{i}_{j-1}$, for $1 \le i \le p_1$. Thus, $C_j = \cup_{i=1}^{p_1}C^{i}_j$ is the subset of cells in the subtrees in $J$ that receive an input from a cell in $C_{j-1}$.
Then, $A \left(\sum_{i=1}^{p_1} \alpha_i \left(  e^{i}_{i^{j-1}_{1}} + \cdots + e^i_{i^{j-1}_{{k_{i,j-1}}}}\right) \right) = \sum_{i=1}^{p_1} \alpha_i \left (e^i_{i^j_1} + \cdots + e^i_{i^j_{k_{ij}}}\right)$, for $2 \le j \le s$, and $\alpha_i \in \RR$. Note that the cells in $C_s$ are leafs and, thus,  $A \left( \sum_{i=1}^{p_1} \alpha_i \left(e^i_{i^s_1} + \cdots + e^i_{i^s_{k_{is}}}\right) \right)= 0$. If we set $\tilde u_j  =  \sum_{i=1}^{p_1} \alpha_i \left(e^i_{i^j_1} + \cdots + e^i_{i^j_{k_{ij}}}\right)$, for $1 \le j \le s$ then, for every $\alpha_i \in \RR$, we have that $(\tilde u_s, \tilde u_{s-1}, \ldots, \tilde u_{1})$ is a Jordan chain associated to the eigenvalue zero.
Conversely, if $(\tilde u_s, \tilde u_{s-1}, \ldots, \tilde u_{1})$, with $1 < s \le m-n$ is a Jordan chain associated to the eigenvalue zero then $A( \tilde u_s) =  0$ and $A( \tilde u_{j-1}) =  \tilde u_j$, for $2 \le j \le s$. Since the network $\NN$ is 1-input regular, each row of $A$ has only one nonzero entry, which is equal to $1$. Thus, we have $\tilde u_j  = \alpha^j_{1} e_{k^j_1} + \cdots + \alpha^j_{p_j} e_{k^j_{p_j}}$, for $1 \le j \le s$, with $k^j_p \ne k^j_q$, for $p \ne q$.
We have then $C_{j} = \{ c_{k^j_1}, \ldots, c_{k^j_{p_j}} \} \subset \{m+1, \ldots, n\}$. Each cell in $C_j$ receives its input from a cell in $C_{j-1}$ and $C_l \cap C_p = \emptyset$, for all $1 \le l,p \le s$. A cell in $C_{j-1}$ that does not send an input to a cell in $C_{j}$ is a leaf cell of $\NN$. We have then that, the cells in $\cup_{j=2}^s C_{j}$ and their inputs, together with the cells in $C_1$ form the subset $J$ of subtrees of $\NN$ associated to the given Jordan chain. If cell $c_{k^j_l}$ receives its  input from cell $c_{k^{j-1}_{\tilde l}}$ then $\alpha_{k^j_l} = \alpha_{k^{j-1}_{\tilde l}}$. Thus, the cells in the same subtree $\BB_i$ are associated to the same $\alpha_i$ constant. Moreover, those  that have the same distance from the root cell of the subtree are in the same set $C_j$. As $s >1$, there is at least one subtree in $J$ with depth greater than 1.
This finishes the proof of (iii).
\qed 
\end{proof}

\begin{thm} \label{thm:setirredu}  
Let $\NN$ be a 1-input regular coupled cell network with $n$ cells $\{c_1, \ldots, c_n \}$ and a $m$-cell ring, with $1 \le m \le n$. Assume that the cells of $\NN$ are enumerated such that the corresponding adjacency matrix $A$ has the block structure (\ref{eq:matrix}) and consider the notation introduced in Lemma~\ref{lem:eigen}.
The set $\mathcal{J}(V_{\N})$ of the join-irreducible elements for the lattice $V_{\N}$ of the synchrony subspaces of $\NN$ is formed by 
\begin{itemize}
\item[(i)] the $1$-dimensional synchrony subspace $E_{\omega_0}$, 
\item[(ii)] for each proper divisor $d$ of $m$ different from $1$, the $\frac{m}{d}$-dimensional synchrony subspace $E_{\omega_0} + E_{\omega_d} + E_{\omega_{2d}} + \cdots + E_{\omega_{\left(\frac{m}{d}-1\right) d}}$, 
\item[(iii)] the $m$-dimensional synchrony subspace $E_{\omega_0} + E_{\omega_1}+ \cdots + E_{\omega_{m-1}}$,
\item[(iv)] for every $1 \le l \le t$ and every possible subset $\{ u_1, \ldots, u_l \} \subseteq \{ e_{i_1}, \ldots, e_{i_t} \} $, the $2$-dimensional synchrony subspace $E_{\omega_0} + <u_1 + \cdots + u_l>$, 
\item[(v)] for each Jordan chain $(\tilde u_1, \ldots, \tilde u_q)$, where each $\tilde u_j = e_{k^j_1} + \cdots + e_{k^j_{p_{j}}}$, the $(q+1)$-dimensional synchrony subspace $E_{\omega_0} + <\tilde u_1, \ldots, \tilde u_q>$.
\end{itemize}
\end{thm}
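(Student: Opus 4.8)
The plan is to exploit Corollary~\ref{cor:sublattice}: since $V_{\N}$ is a sublattice of the lattice of $A$-invariant subspaces whose join is the sum, a synchrony subspace is join-irreducible exactly when it is not the sum of two strictly smaller synchrony subspaces. Throughout I would work with the generalized-eigenspace decomposition $\RR^n = \bigoplus_{j} E_{\omega_j} \oplus G_0$ supplied by Lemma~\ref{lem:eigen}, in which each ring eigenspace $E_{\omega_j}$ is one-dimensional and $G_0$ is the generalized kernel carrying the tree/Jordan structure (the complex-conjugate pairs $E_{\omega_j},E_{\omega_{m-j}}$ being grouped to stay over $\RR$). The basic observation is that every synchrony subspace $W$ is a polydiagonal, hence contains the diagonal $E_{\omega_0}=\langle(1,\dots,1)\rangle$, and, being $A$-invariant, splits as $W = W_{\mathrm{ring}} \oplus W_0$ with $W_{\mathrm{ring}} = \bigoplus_{j}(W\cap E_{\omega_j}) \supseteq E_{\omega_0}$ and $W_0 = W\cap G_0$.

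First I would check that each subspace in (i)--(v) really is a synchrony subspace, reading off its balanced colouring from the explicit eigenvectors and Jordan chains of Lemma~\ref{lem:eigen}: items (i)--(iii) are the colourings of the ring by the cosets of the subgroups of $\Z/m$, with each tail cell slaved to the ring cell from which it is ultimately fed; item (iv) separates a chosen set of leaves from the common colour; and item (v) distinguishes the successive depth-levels of a family of subtrees, using the level-indicator form of the $\tilde u_j$. Then I would prove join-irreducibility of each. For the ring subspaces the index set of the participating eigenvectors is a cyclic subgroup of $\Z/m$; since a finite group is never the union of two proper subgroups, such a subspace cannot be the sum of two smaller period subspaces, so it is join-irreducible. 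The subspaces in (iv) are two-dimensional and have only $E_{\omega_0}$ strictly below them, hence are atoms. For (v) I would use that the invariant subspaces of a single nilpotent Jordan block form a chain $E_{\omega_0}\subset E_{\omega_0}+\langle\tilde u_1\rangle\subset\cdots\subset E_{\omega_0}+\langle\tilde u_1,\dots,\tilde u_q\rangle$, so $W$ has a unique maximal synchrony subspace below it and is therefore join-irreducible.

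For the converse I would take an arbitrary join-irreducible $W$ and feed in the split $W=W_{\mathrm{ring}}\oplus W_0$. If both $W_{\mathrm{ring}}\supsetneq E_{\omega_0}$ and $W_0\neq 0$, then $W=W_{\mathrm{ring}}+(E_{\omega_0}+W_0)$ exhibits $W$ as a sum of two strictly smaller synchrony subspaces, contradicting join-irreducibility; hence $W$ is either \emph{pure ring} ($W_0=0$) or \emph{pure tail} ($W_{\mathrm{ring}}=E_{\omega_0}$). In the pure-ring case the balanced colouring restricts to a balanced colouring of the directed $m$-ring, which must be periodic, so its eigenvector index set is a subgroup of $\Z/m$ and $W$ is one of the period subspaces in (i)--(iii). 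In the pure-tail case $W=E_{\omega_0}+W_0$ with $W_0$ a nilpotent-invariant polydiagonal direction; join-irreducibility forces $W_0$ to be indecomposable, and the descriptions of $\ker A$ and of the Jordan chains in Lemma~\ref{lem:eigen}(ii)--(iii) leave only two possibilities, a single leaf-sum eigenvector (giving (iv)) or a single Jordan chain (giving (v)).

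The step I expect to be the main obstacle is the exhaustiveness argument, in two places. The delicate point in the split is to show that the ring-part $W_{\mathrm{ring}}$ and the tail-part $E_{\omega_0}+W_0$ of a synchrony subspace are themselves polydiagonals, so that the decomposition genuinely takes place inside $V_{\N}$; this requires meshing the purely linear-algebraic eigenspace decomposition of Lemma~\ref{lem:eigen} with the combinatorial balanced-colouring condition. The second and harder point is the pure-tail classification: one must prove that every join-irreducible nilpotent-invariant polydiagonal direction reduces to a single level-sum Jordan chain or a single leaf-sum, ruling out that a ``mixed'' combination of subtrees survives as a join-irreducible element. Both rest on the tree description of the Jordan chains in Lemma~\ref{lem:eigen}(iii), whereas the polydiagonal (balanced-colouring) verifications in (iv)--(v) are routine once the explicit level-indicator vectors are used.
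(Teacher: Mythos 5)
Your strategy is genuinely different from the paper's, and in outline it is viable. The paper argues bottom-up: every $A$-invariant subspace is spanned by eigenvectors and Jordan chains, so it computes, for each such generator, the smallest synchrony subspace containing it and checks that these hulls are precisely the subspaces (i)--(v). You argue top-down, splitting an arbitrary $W\in V_{\N}$ as $W_{\mathrm{ring}}\oplus W_0$ by the primary decomposition and classifying join-irreducibles by cases. Your irreducibility arguments are correct and in places cleaner than the paper's one-line justification: the fact that a group is never the union of two proper subgroups handles the ring subspaces, the atom observation handles (iv), and the chain of invariant subspaces of a cyclic nilpotent block handles (v). One small slip there: since $A\tilde u_j=\tilde u_{j+1}$, the invariant subspaces of the block are $\langle\tilde u_q\rangle\subset\langle\tilde u_{q-1},\tilde u_q\rangle\subset\cdots$, generated by \emph{final} segments of the chain; $\langle\tilde u_1\rangle$ is not $A$-invariant, so your displayed chain is indexed backwards, though the conclusion (a unique maximal proper synchrony subspace below $W$) survives.

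The genuine gap is that the two steps you flag as ``the main obstacle'' are exactly the substance of the exhaustiveness direction, and the proposal contains no argument for either. First, the decomposition $W=W_{\mathrm{ring}}+(E_{\omega_0}+W_0)$ only contradicts join-irreducibility if both summands lie in $V_{\N}$. This can be closed, but it needs a proof: using Lemma~\ref{lem:eigen}(i), each cell has a well-defined phase in $\Z/m\Z$, and $v_j\in W$ iff $j$ annihilates every phase difference of a synchronized pair, so the index set $S=\{j:\ v_j\in W\}$ is a subgroup (an annihilator) and $W_{\mathrm{ring}}$ is the corresponding polydiagonal of type (i)--(iii); moreover $E_{\omega_0}+W_0$ is the polydiagonal of the coarsening of $W$'s colouring that merges all classes meeting the ring, and this coarsening is balanced because a cell synchronized with a ring cell has its unique input synchronized with that ring cell's input, which is again a ring cell. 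Second, and harder, in the pure-tail case ``join-irreducibility forces $W_0$ to be indecomposable'' does not by itself yield (iv)/(v). The tail-only classes of $W$ form a forest under the feeding relation, and one must show that whenever this forest is \emph{not} a single chain (two roots, or one class feeding two classes), $W$ decomposes as a sum of strictly smaller synchrony subspaces. That requires constructing explicit balanced coarsenings --- e.g.\ absorbing an ancestor-closed set of classes into the ring class, and merging sibling subtrees level-wise --- chosen so that the common refinement of their partitions is the partition of $W$ (recall that for asymmetric inputs the sum of synchrony subspaces is the polydiagonal of the common refinement). Lemma~\ref{lem:eigen} alone does not supply this combinatorial step. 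Both gaps are fillable, so your route can be completed; but as written, the proposal establishes that the listed subspaces are join-irreducible while leaving the proof that the list is exhaustive essentially open.
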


\begin{proof} 
By Corollary~\ref{cor:sublattice}, the synchrony subspaces for $\NN$ are the $A$-invariant subspaces that are defined by some set of cell coordinate equalities. The $A$-invariant subspaces are the subspaces generated by eigenvectors and/or Jordan chains of $A$. We start by determining, for each eigenvector and Jordan chain of $A$, the synchrony subspace with lowest dimension that contains it.
Then we argue that the synchrony subspaces $(i)-(v)$ form the set $\mathcal{J}(V_{\N})$ of the join-irreducible elements for the lattice $V_{\N}$ as they cannot be given by the sum of other synchrony subspaces and the other synchrony subspaces in the lattice are obtained by their sums.

(i) We first note that the eigenvector $v_0$ is $(1,1, \ldots, 1)$ and, thus, the eigenspace $E_{\omega_0}$ is the full synchrony subspace, where all cells are synchronized. Moreover, every synchrony subspace is given by the sum of  $E_{\omega_0}$ with other subspace(s) generated by eigenvectors and/or Jordan chains of $A$.

(ii) It follows from (i) in Lemma~\ref{lem:eigen} that for each eigenvalue $\omega_j=e^{\frac{2\pi i j}{m}}$, $j=1,\dots, m-1$, the associated eigenspace $E_{\omega_j}$ is generated by an eigenvector $v_j$ such that for all $i \in \{m+1, \ldots, n \}$, the $i$-th entry is equal to a $k$-th entry, for some $k \in \{1, \ldots, m\}$, where the pairs $(i,k)$ coincide for all $j$. 
Moreover, for each proper divisor $d$ of $m$ different from $1$, each eigenvector $v_j$, with $j \in \{ d, 2d, \ldots, \left(\frac{m}{d}-1\right) d \}$,  satisfies, for $i \in \{1,\ldots,\frac{m}{d}\}$ the coordinate equality conditions $v_{j,i} = v_{j,i +\frac{m}{d}} = \cdots = v_{j,i + \left(\frac{m}{d} -1\right)d}$. Therefore, $<v_0, v_d, v_{2d}, \cdots, v_{\left(\frac{m}{d}-1\right) d}>$ is a $\frac{m}{d}$-dimensional synchrony subspace defined by the cell coordinate equality conditions $x_{i} =  x_{i +\frac{m}{d}} = \cdots =  x_{i + \left(\frac{m}{d} -1\right)d}$, for $i \in \{1,\ldots,\frac{m}{d}\}$, together with the $n-m$ cell coordinate equality conditions $x_i=x_k$, with $i \in \{m+1, \ldots, n \}$ and $k \in \{0, \ldots, m-1\}$.

(iii) Moreover, note that the first $m$ entries of a vector $v_j$ such that $j \ne k d$, with $d$ a proper divisor of $m$ different from $1$, are all different. Thus, the synchrony subspace with lowest dimension that contains those vectors is the $m$-dimensional synchrony subspace $E_{\omega_0} + E_{\omega_1}+ \cdots + E_{\omega_{m-1}}$. 

(iv) It follows from (ii) in Lemma~\ref{lem:eigen} that each eigenvector associated to zero has the first $m$ entries equal to $0$ and the last $n-m$ entries equal to $0$ or $1$. Thus, the subspace generated by the eigenvector $\omega_0$ and any eigenvector associated to zero is a $2$-dimensional synchrony subspace.

(v) To each subset of subtrees of $\NN$ is associated an infinite number of Jordan chains, as proved in Lemma~\ref{lem:eigen} (iii). To each such subset is also associated an irreducible synchrony subspace as we will explain. For each subset of subtrees, let us start by considering the Jordan chain $(\tilde u_1, \ldots, \tilde u_q)$, such that $\tilde u_j = e_{k^j_1} + \cdots + e_{k^j_{p_{j}}}$, that is, such that the constants $\alpha_i$ are equal to $1$, with the constants $\alpha_i$ as defined in the proof of Lemma~\ref{lem:eigen} (iii). The situations where the constants $\alpha_i$ are all equal to another real constant value are equivalent to this.
We have then that each generalized eigenvector $\tilde u_i$, $i \in \{1,\ldots,q\}$ has the first $m$ entries equal to $0$ and the last $n-m$ entries equal to $0$ or $1$.  Thus, the coordinates of each generalized eigenvector $\tilde u_i$ can be separated into two sets, one with the coordinates that are equal to zero and the other with the coordinates that are equal to 1.
It follows also by Lemma~\ref{lem:eigen} (iii) that, if $\tilde u_{i,j} = 1$, for some $j \in \{m+1,\ldots,n\}$, then $\tilde u_{k,j} =0$ for all $k \in \{1,\ldots,q\} \setminus \{i\}$. Then, if $\tilde u_{i,j_1} = \cdots = \tilde u_{i,j_l}= 1$, for some $j_1, \ldots, j_l \in \{m+1,\ldots,n\}$, then $\tilde u_{k,j_1} = \cdots = \tilde u_{k,j_l}= 0$ for all $k \in \{1,\ldots,q\} \setminus \{i\}$ and thus $x_{j_1} = \cdots = x_{j_l}$ is a coordinate equality condition that is satisfied by all the generalized eigenvectors $\tilde u_i$, $i \in \{1,\ldots,q\}$.  Thus, the $n$ coordinates can be separated into $q+1$ sets such that for each of the $q$ generalized eigenvectors there is one set with the coordinates that are equal to $1$ in that eigenvector and equal to $0$ in the remaining generalized eigenvectors ; the remaining set contains the coordinates that are equal to zero in all the generalized eigenvectors.
We conclude then that $E_{\omega_0} + <\tilde u_1, \ldots, \tilde u_q>$ is a $(q+1)$-dimensional synchrony subspace. Moreover, it becomes clear, from the above, that each of the remaining Jordan chains associated to the subset of subtrees, where at least one of the constants $\alpha_i$ is different from the others, cannot generate a synchrony subspace. Each of those Jordan chains is contained in a synchrony subspace that is given by the join of synchrony subspaces. More concretely, for each different constant $\alpha_i$, consider the subset of the initial subset of subtrees with the subtrees related to that constant, and consider the synchrony subspace associated to each of those smaller subsets of subtrees. The lowest dimensional synchrony subspace that contains the Jordan chain is given by the join of those synchrony subspaces associated to the different constants $\alpha_i$.

The synchrony subspace $E_{\omega_0}$ is trivially irreducible. Each of the synchrony subspaces in $(ii)-(v)$ is irreducible because, among its generators, there is at least one eigenvector or Jordan chain that satisfies exactly the coordinate equality conditions that define the synchrony subspace.
Since each eigenvector and Jordan chain of $A$ is contained in at least one of the irreducible synchrony subspaces in $(i)-(v)$, or in a synchrony subspace that is given by the join of some of those irreducible synchrony subspaces, we conclude that those subspaces give the set $\mathcal{J}(V_{\N})$ of the join-irreducible elements of the lattice $V_{\N}$.
\qed 
\end{proof}

\begin{ex}  \normalfont \label{ex:lattice_1_regular}
Consider the $1$-input regular coupled cell network $\NN$ in Figure~\ref{fig:subN2}. The eigenvalues of the adjacency matrix of $\NN$ are $1, -1, i, -i$ and $0$. According to Lemma~\ref{lem:eigen}, the eigenspaces and generalized eigenspaces are
$$
E_1 = <(1,1,1,1,1,1,1)>,\quad E_{-1} = <(1,-1,1,-1,-1,1,1)>,\quad E_{i} = <(1,-i,-1,i,-i,-1,-1)>,
$$
$$
E_{-i} = <(1,i,-1,-i,i,-1,-1)>,\quad E_0 = <(0,0,0,0,0,1,0), (0,0,0,0,0,0,1)>,
$$
$$
G_0 = <(0,0,0,0,1,0,0), (0,0,0,0,0,1,0), (0,0,0,0,0,0,1)>.
$$

By Theorem~\ref{thm:setirredu} the join-irreducible elements in the set $\mathcal{J}(V_{\N})$ for the lattice $V_{\N}$ of the synchrony subspaces for $\NN$ are 
$$
\begin{array}{ll}
\Delta_{1} = E_1 &
\Delta_{2} = E_1 \oplus E_{-1} \\
\Delta_{3}  = E_1 \oplus E_{-1} \oplus E_{i} \oplus E_{-i} &
\Delta_{4} = E_1 \oplus <(0,0,0,0,0,0,1)>  \\
\Delta_{5}  = E_1 \oplus <(0,0,0,0,0,1,0)>  &
\Delta_{6} = E_1 \oplus <(0,0,0,0,0,1,1)> \\
\Delta_{7} =  E_1 \oplus <(0,0,0,0,1,0,0), (0,0,0,0,0,1,0)> &
\Delta_{8} =  E_1 \oplus <(0,0,0,0,1,0,1), (0,0,0,0,0,1,0)>.
\end{array}
$$
The balanced colourings corresponding to the  join-irreducible synchrony subspaces $\Delta_i$, $i=1,\ldots,8$ are presented in Table~\ref{tab:irredu}. The remaining synchrony subspaces in $V_{\N}$ are given by all the possible sums of the elements in $\mathcal{J}(V_{\N})$ and the corresponding balanced colourings are listed in Table~\ref{tab:nonirredu}. Note that, 
$\Delta_{2} + \Delta_{3} = \Delta_{3}$,  $\Delta_{4} + \Delta_{6} = \Delta_{4} + \Delta_{5}$,  $\Delta_{4} + \Delta_{8} =\Delta_{4} + \Delta_{7}$,
$\Delta_{5} + \Delta_{6} = \Delta_{4} + \Delta_{5}$, $\Delta_{5} + \Delta_{7} = \Delta_{7}$, $\Delta_{5} + \Delta_{8} =\Delta_{4} + \Delta_{7}$,
$\Delta_{6} + \Delta_{7} = \Delta_{6} + \Delta_{8} =\Delta_{4} + \Delta_{7}$ and $\Delta_{7} + \Delta_{8}=\Delta_{4} + \Delta_{7}$.

\begin{table}[!htb]   
\begin{center}
\scalebox{.55}
{
\begin{tabular}{|c|c|c|}
\hline
\\
$\Delta_1$
{\small \begin{tikzpicture}	[->,>=stealth',shorten >=1pt,auto, node distance=1.5cm, thick,node/.style={circle,draw}]
	                 \node[node]	(1) at (-75mm, 4cm)  [fill=green] {$7$};
			\node[node]	(2) at (-5cm, 4cm)  [fill=green] {$2$};
			\node[node]	(3) at (-5cm, 2cm)  [fill=green] {$3$};
			\node[node]	(4) at (-25mm, 4cm)  [fill=green] {$1$};			
			\node[node]	(5) at (0cm, 4cm)  [fill=green] {$5$};			
			\node[node]	(6) at (0cm, 2cm)  [fill=green] {$6$};
			\node[node]	(7) at (-25mm, 2cm)  [fill=green] {$4$};				

				\path
				 [dashed] 
				 (2) edge node {} (1) 
				(2) edge node {} (3)
				(3) edge node {} (7)
				(7) edge node {} (4)
				(4) edge node {} (2)
				(4) edge node {} (5)
				(5) edge node {} (6)
				;
		\end{tikzpicture}}
\hspace{0.2in}	
&  
$\Delta_2$
{\small \begin{tikzpicture}	[->,>=stealth',shorten >=1pt,auto, node distance=1.5cm, thick,node/.style={circle,draw}]
	                 \node[node]	(1) at (-75mm, 4cm)  [fill=green] {$7$};
			\node[node]	(2) at (-5cm, 4cm)  [fill=orange] {$2$};
			\node[node]	(3) at (-5cm, 2cm)  [fill=green] {$3$};
			\node[node]	(4) at (-25mm, 4cm)  [fill=green] {$1$};			
			\node[node]	(5) at (0cm, 4cm)  [fill=orange] {$5$};			
			\node[node]	(6) at (0cm, 2cm)  [fill=green] {$6$};
			\node[node]	(7) at (-25mm, 2cm)  [fill=orange] {$4$};				

				\path
				 [dashed] 
				 (2) edge node {} (1) 
				(2) edge node {} (3)
				(3) edge node {} (7)
				(7) edge node {} (4)
				(4) edge node {} (2)
				(4) edge node {} (5)
				(5) edge node {} (6)
				;
		\end{tikzpicture}}
\hspace{0.2in}
&
$\Delta_3$
{\small \begin{tikzpicture}	[->,>=stealth',shorten >=1pt,auto, node distance=1.5cm, thick,node/.style={circle,draw}]
	                 \node[node]	(1) at (-75mm, 4cm)  [fill=green] {$7$};
			\node[node]	(2) at (-5cm, 4cm)  [fill=orange] {$2$};
			\node[node]	(3) at (-5cm, 2cm)  [fill=green] {$3$};
			\node[node]	(4) at (-25mm, 4cm)  [fill=yellow] {$1$};			
			\node[node]	(5) at (0cm, 4cm)  [fill=orange] {$5$};			
			\node[node]	(6) at (0cm, 2cm)  [fill=green] {$6$};
			\node[node]	(7) at (-25mm, 2cm)  [fill=magenta] {$4$};				

				\path
				 [dashed] 
				 (2) edge node {} (1) 
				(2) edge node {} (3)
				(3) edge node {} (7)
				(7) edge node {} (4)
				(4) edge node {} (2)
				(4) edge node {} (5)
				(5) edge node {} (6)
				;
		\end{tikzpicture}}		
\hspace{0.2in} \\
  \\
\hline
  \\
$\Delta_4$
{\small \begin{tikzpicture}	[->,>=stealth',shorten >=1pt,auto, node distance=1.5cm, thick,node/.style={circle,draw}]
	                 \node[node]	(1) at (-75mm, 4cm)  [fill=orange] {$7$};
			\node[node]	(2) at (-5cm, 4cm)  [fill=green] {$2$};
			\node[node]	(3) at (-5cm, 2cm)  [fill=green] {$3$};
			\node[node]	(4) at (-25mm, 4cm)  [fill=green] {$1$};			
			\node[node]	(5) at (0cm, 4cm)  [fill=green] {$5$};			
			\node[node]	(6) at (0cm, 2cm)  [fill=green] {$6$};
			\node[node]	(7) at (-25mm, 2cm)  [fill=green] {$4$};				

				\path
				 [dashed] 
				 (2) edge node {} (1) 
				(2) edge node {} (3)
				(3) edge node {} (7)
				(7) edge node {} (4)
				(4) edge node {} (2)
				(4) edge node {} (5)
				(5) edge node {} (6)
				;
		\end{tikzpicture}}
 &  
 $\Delta_5$
{\small \begin{tikzpicture}	[->,>=stealth',shorten >=1pt,auto, node distance=1.5cm, thick,node/.style={circle,draw}]
	                 \node[node]	(1) at (-75mm, 4cm)  [fill=green] {$7$};
			\node[node]	(2) at (-5cm, 4cm)  [fill=green] {$2$};
			\node[node]	(3) at (-5cm, 2cm)  [fill=green] {$3$};
			\node[node]	(4) at (-25mm, 4cm)  [fill=green] {$1$};			
			\node[node]	(5) at (0cm, 4cm)  [fill=green] {$5$};			
			\node[node]	(6) at (0cm, 2cm)  [fill=orange] {$6$};
			\node[node]	(7) at (-25mm, 2cm)  [fill=green] {$4$};				

				\path
				 [dashed] 
				 (2) edge node {} (1) 
				(2) edge node {} (3)
				(3) edge node {} (7)
				(7) edge node {} (4)
				(4) edge node {} (2)
				(4) edge node {} (5)
				(5) edge node {} (6)
				;
		\end{tikzpicture}}
&
$\Delta_6$
{\small \begin{tikzpicture}	[->,>=stealth',shorten >=1pt,auto, node distance=1.5cm, thick,node/.style={circle,draw}]
	                 \node[node]	(1) at (-75mm, 4cm)  [fill=orange] {$7$};
			\node[node]	(2) at (-5cm, 4cm)  [fill=green] {$2$};
			\node[node]	(3) at (-5cm, 2cm)  [fill=green] {$3$};
			\node[node]	(4) at (-25mm, 4cm)  [fill=green] {$1$};			
			\node[node]	(5) at (0cm, 4cm)  [fill=green] {$5$};			
			\node[node]	(6) at (0cm, 2cm)  [fill=orange] {$6$};
			\node[node]	(7) at (-25mm, 2cm)  [fill=green] {$4$};					

				\path
				 [dashed] 
				 (2) edge node {} (1) 
				(2) edge node {} (3)
				(3) edge node {} (7)
				(7) edge node {} (4)
				(4) edge node {} (2)
				(4) edge node {} (5)
				(5) edge node {} (6)
				;
		\end{tikzpicture}}		
 \\
\\
\hline
  \\
$\Delta_7$
{\small \begin{tikzpicture}	[->,>=stealth',shorten >=1pt,auto, node distance=1.5cm, thick,node/.style={circle,draw}]
	                 \node[node]	(1) at (-75mm, 4cm)  [fill=green] {$7$};
			\node[node]	(2) at (-5cm, 4cm)  [fill=green] {$2$};
			\node[node]	(3) at (-5cm, 2cm)  [fill=green] {$3$};
			\node[node]	(4) at (-25mm, 4cm)  [fill=green] {$1$};			
			\node[node]	(5) at (0cm, 4cm)  [fill=orange] {$5$};			
			\node[node]	(6) at (0cm, 2cm)  [fill=yellow] {$6$};
			\node[node]	(7) at (-25mm, 2cm)  [fill=green] {$4$};					

				\path
				 [dashed] 
				 (2) edge node {} (1) 
				(2) edge node {} (3)
				(3) edge node {} (7)
				(7) edge node {} (4)
				(4) edge node {} (2)
				(4) edge node {} (5)
				(5) edge node {} (6)
				;
		\end{tikzpicture}}
 &  
 $\Delta_8$
{\small \begin{tikzpicture}	[->,>=stealth',shorten >=1pt,auto, node distance=1.5cm, thick,node/.style={circle,draw}]
	                 \node[node]	(1) at (-75mm, 4cm)  [fill=orange] {$7$};
			\node[node]	(2) at (-5cm, 4cm)  [fill=green] {$2$};
			\node[node]	(3) at (-5cm, 2cm)  [fill=green] {$3$};
			\node[node]	(4) at (-25mm, 4cm)  [fill=green] {$1$};			
			\node[node]	(5) at (0cm, 4cm)  [fill=orange] {$5$};			
			\node[node]	(6) at (0cm, 2cm)  [fill=yellow] {$6$};
			\node[node]	(7) at (-25mm, 2cm)  [fill=green] {$4$};				

				\path
				 [dashed] 
				 (2) edge node {} (1) 
				(2) edge node {} (3)
				(3) edge node {} (7)
				(7) edge node {} (4)
				(4) edge node {} (2)
				(4) edge node {} (5)
				(5) edge node {} (6)
				;
		\end{tikzpicture}}
		&		
  \\
\\
\hline
\end{tabular}
}
\caption{The balanced colourings corresponding to the  join-irreducible elements of the lattice $V_{\N}$ of the synchrony subspaces for the $1$-input regular coupled cell network $\NN$ in Figure~\ref{fig:subN2}.} \label{tab:irredu}
\end{center}
\end{table}

\begin{table}[!htb]   
\begin{center}
\scalebox{.55}
{
\begin{tabular}{|c|c|c|}
\hline
\\
$\Delta_2 + \Delta_4$
\hspace{-0.6in}
{\small \begin{tikzpicture}	[->,>=stealth',shorten >=1pt,auto, node distance=1.5cm, thick,node/.style={circle,draw}]
	                 \node[node]	(1) at (-75mm, 4cm)  [fill=yellow] {$7$};
			\node[node]	(2) at (-5cm, 4cm)  [fill=orange] {$2$};
			\node[node]	(3) at (-5cm, 2cm)  [fill=green] {$3$};
			\node[node]	(4) at (-25mm, 4cm)  [fill=green] {$1$};			
			\node[node]	(5) at (0cm, 4cm)  [fill=orange] {$5$};			
			\node[node]	(6) at (0cm, 2cm)  [fill=green] {$6$};
			\node[node]	(7) at (-25mm, 2cm)  [fill=orange] {$4$};				

				\path
				 [dashed] 
				 (2) edge node {} (1) 
				(2) edge node {} (3)
				(3) edge node {} (7)
				(7) edge node {} (4)
				(4) edge node {} (2)
				(4) edge node {} (5)
				(5) edge node {} (6)
				;
		\end{tikzpicture}}
\hspace{0.2in}	
&  
$\Delta_2 + \Delta_5$
\hspace{-0.6in}
{\small \begin{tikzpicture}	[->,>=stealth',shorten >=1pt,auto, node distance=1.5cm, thick,node/.style={circle,draw}]
	                 \node[node]	(1) at (-75mm, 4cm)  [fill=green] {$7$};
			\node[node]	(2) at (-5cm, 4cm)  [fill=orange] {$2$};
			\node[node]	(3) at (-5cm, 2cm)  [fill=green] {$3$};
			\node[node]	(4) at (-25mm, 4cm)  [fill=green] {$1$};			
			\node[node]	(5) at (0cm, 4cm)  [fill=orange] {$5$};			
			\node[node]	(6) at (0cm, 2cm)  [fill=yellow] {$6$};
			\node[node]	(7) at (-25mm, 2cm)  [fill=orange] {$4$};				

				\path
				 [dashed] 
				 (2) edge node {} (1) 
				(2) edge node {} (3)
				(3) edge node {} (7)
				(7) edge node {} (4)
				(4) edge node {} (2)
				(4) edge node {} (5)
				(5) edge node {} (6)
				;
		\end{tikzpicture}}
\hspace{0.2in}
&
$\Delta_2 + \Delta_6$
\hspace{-0.6in}
{\small \begin{tikzpicture}	[->,>=stealth',shorten >=1pt,auto, node distance=1.5cm, thick,node/.style={circle,draw}]
	                 \node[node]	(1) at (-75mm, 4cm)  [fill=yellow] {$7$};
			\node[node]	(2) at (-5cm, 4cm)  [fill=orange] {$2$};
			\node[node]	(3) at (-5cm, 2cm)  [fill=green] {$3$};
			\node[node]	(4) at (-25mm, 4cm)  [fill=green] {$1$};			
			\node[node]	(5) at (0cm, 4cm)  [fill=orange] {$5$};			
			\node[node]	(6) at (0cm, 2cm)  [fill=yellow] {$6$};
			\node[node]	(7) at (-25mm, 2cm)  [fill=orange] {$4$};			

				\path
				 [dashed] 
				 (2) edge node {} (1) 
				(2) edge node {} (3)
				(3) edge node {} (7)
				(7) edge node {} (4)
				(4) edge node {} (2)
				(4) edge node {} (5)
				(5) edge node {} (6)
				;
		\end{tikzpicture}}		
\hspace{0.2in} \\
  \\
\hline
  \\
$\Delta_2 + \Delta_7$
\hspace{-0.6in}
{\small \begin{tikzpicture}	[->,>=stealth',shorten >=1pt,auto, node distance=1.5cm, thick,node/.style={circle,draw}]
	                 \node[node]	(1) at (-75mm, 4cm)  [fill=green] {$7$};
			\node[node]	(2) at (-5cm, 4cm)  [fill=orange] {$2$};
			\node[node]	(3) at (-5cm, 2cm)  [fill=green] {$3$};
			\node[node]	(4) at (-25mm, 4cm)  [fill=green] {$1$};			
			\node[node]	(5) at (0cm, 4cm)  [fill=yellow] {$5$};			
			\node[node]	(6) at (0cm, 2cm)  [fill=magenta] {$6$};
			\node[node]	(7) at (-25mm, 2cm)  [fill=orange] {$4$};				

				\path
				 [dashed] 
				 (2) edge node {} (1) 
				(2) edge node {} (3)
				(3) edge node {} (7)
				(7) edge node {} (4)
				(4) edge node {} (2)
				(4) edge node {} (5)
				(5) edge node {} (6)
				;
		\end{tikzpicture}}
 &  
 $\Delta_3 + \Delta_4$
 \hspace{-0.6in}
{\small \begin{tikzpicture}	[->,>=stealth',shorten >=1pt,auto, node distance=1.5cm, thick,node/.style={circle,draw}]
	                 \node[node]	(1) at (-75mm, 4cm)  [fill=blue!40] {$7$};
			\node[node]	(2) at (-5cm, 4cm)  [fill=green] {$2$};
			\node[node]	(3) at (-5cm, 2cm)  [fill=orange] {$3$};
			\node[node]	(4) at (-25mm, 4cm)  [fill=yellow] {$1$};			
			\node[node]	(5) at (0cm, 4cm)  [fill=green] {$5$};			
			\node[node]	(6) at (0cm, 2cm)  [fill=orange] {$6$};
			\node[node]	(7) at (-25mm, 2cm)  [fill=magenta] {$4$};				

				\path
				 [dashed] 
				 (2) edge node {} (1) 
				(2) edge node {} (3)
				(3) edge node {} (7)
				(7) edge node {} (4)
				(4) edge node {} (2)
				(4) edge node {} (5)
				(5) edge node {} (6)
				;
		\end{tikzpicture}}
&
$\Delta_3 + \Delta_5$
\hspace{-0.6in}
{\small \begin{tikzpicture}	[->,>=stealth',shorten >=1pt,auto, node distance=1.5cm, thick,node/.style={circle,draw}]
	                 \node[node]	(1) at (-75mm, 4cm)  [fill=orange] {$7$};
			\node[node]	(2) at (-5cm, 4cm)  [fill=green] {$2$};
			\node[node]	(3) at (-5cm, 2cm)  [fill=orange] {$3$};
			\node[node]	(4) at (-25mm, 4cm)  [fill=yellow] {$1$};			
			\node[node]	(5) at (0cm, 4cm)  [fill=green] {$5$};			
			\node[node]	(6) at (0cm, 2cm)  [fill=blue!40] {$6$};
			\node[node]	(7) at (-25mm, 2cm)  [fill=magenta] {$4$};						

				\path
				 [dashed] 
				 (2) edge node {} (1) 
				(2) edge node {} (3)
				(3) edge node {} (7)
				(7) edge node {} (4)
				(4) edge node {} (2)
				(4) edge node {} (5)
				(5) edge node {} (6)
				;
		\end{tikzpicture}}		
 \\
\\
\hline
  \\
$\Delta_3 + \Delta_6$
\hspace{-0.6in}
{\small \begin{tikzpicture}	[->,>=stealth',shorten >=1pt,auto, node distance=1.5cm, thick,node/.style={circle,draw}]
	                 \node[node]	(1) at (-75mm, 4cm)  [fill=blue!40] {$7$};
			\node[node]	(2) at (-5cm, 4cm)  [fill=green] {$2$};
			\node[node]	(3) at (-5cm, 2cm)  [fill=orange] {$3$};
			\node[node]	(4) at (-25mm, 4cm)  [fill=yellow] {$1$};			
			\node[node]	(5) at (0cm, 4cm)  [fill=green] {$5$};			
			\node[node]	(6) at (0cm, 2cm)  [fill=blue!40] {$6$};
			\node[node]	(7) at (-25mm, 2cm)  [fill=magenta] {$4$};					

				\path
				 [dashed] 
				 (2) edge node {} (1) 
				(2) edge node {} (3)
				(3) edge node {} (7)
				(7) edge node {} (4)
				(4) edge node {} (2)
				(4) edge node {} (5)
				(5) edge node {} (6)
				;
		\end{tikzpicture}}
 &  
 $\Delta_3 + \Delta_7$
 \hspace{-0.6in}
{\small \begin{tikzpicture}	[->,>=stealth',shorten >=1pt,auto, node distance=1.5cm, thick,node/.style={circle,draw}]
	                 \node[node]	(1) at (-75mm, 4cm)  [fill=orange] {$7$};
			\node[node]	(2) at (-5cm, 4cm)  [fill=green] {$2$};
			\node[node]	(3) at (-5cm, 2cm)  [fill=orange] {$3$};
			\node[node]	(4) at (-25mm, 4cm)  [fill=yellow] {$1$};			
			\node[node]	(5) at (0cm, 4cm)  [fill=brown] {$5$};			
			\node[node]	(6) at (0cm, 2cm)  [fill=blue!40] {$6$};
			\node[node]	(7) at (-25mm, 2cm)  [fill=magenta] {$4$};			

				\path
				 [dashed] 
				 (2) edge node {} (1) 
				(2) edge node {} (3)
				(3) edge node {} (7)
				(7) edge node {} (4)
				(4) edge node {} (2)
				(4) edge node {} (5)
				(5) edge node {} (6)
				;
		\end{tikzpicture}}
		&	
$\Delta_4 + \Delta_5$
\hspace{-0.6in}
{\small \begin{tikzpicture}	[->,>=stealth',shorten >=1pt,auto, node distance=1.5cm, thick,node/.style={circle,draw}]
	                 \node[node]	(1) at (-75mm, 4cm)  [fill=yellow] {$7$};
			\node[node]	(2) at (-5cm, 4cm)  [fill=green] {$2$};
			\node[node]	(3) at (-5cm, 2cm)  [fill=green] {$3$};
			\node[node]	(4) at (-25mm, 4cm)  [fill=green] {$1$};			
			\node[node]	(5) at (0cm, 4cm)  [fill=green] {$5$};			
			\node[node]	(6) at (0cm, 2cm)  [fill=orange] {$6$};
			\node[node]	(7) at (-25mm, 2cm)  [fill=green] {$4$};			

				\path
				 [dashed] 
				 (2) edge node {} (1) 
				(2) edge node {} (3)
				(3) edge node {} (7)
				(7) edge node {} (4)
				(4) edge node {} (2)
				(4) edge node {} (5)
				(5) edge node {} (6)
				;
		\end{tikzpicture}}	
  \\
\\
\hline
  \\
$\Delta_4 + \Delta_7$
\hspace{-0.6in}
{\small \begin{tikzpicture}	[->,>=stealth',shorten >=1pt,auto, node distance=1.5cm, thick,node/.style={circle,draw}]
	                 \node[node]	(1) at (-75mm, 4cm)  [fill=yellow] {$7$};
			\node[node]	(2) at (-5cm, 4cm)  [fill=green] {$2$};
			\node[node]	(3) at (-5cm, 2cm)  [fill=green] {$3$};
			\node[node]	(4) at (-25mm, 4cm)  [fill=green] {$1$};			
			\node[node]	(5) at (0cm, 4cm)  [fill=magenta] {$5$};			
			\node[node]	(6) at (0cm, 2cm)  [fill=orange] {$6$};
			\node[node]	(7) at (-25mm, 2cm)  [fill=green] {$4$};					

				\path
				 [dashed] 
				 (2) edge node {} (1) 
				(2) edge node {} (3)
				(3) edge node {} (7)
				(7) edge node {} (4)
				(4) edge node {} (2)
				(4) edge node {} (5)
				(5) edge node {} (6)
				;
		\end{tikzpicture}}
 &  
 $\Delta_2 + \Delta_4 + \Delta_5$
\hspace{-0.6in}
{\small \begin{tikzpicture}	[->,>=stealth',shorten >=1pt,auto, node distance=1.5cm, thick,node/.style={circle,draw}]
	                 \node[node]	(1) at (-75mm, 4cm)  [fill=yellow] {$7$};
			\node[node]	(2) at (-5cm, 4cm)  [fill=orange] {$2$};
			\node[node]	(3) at (-5cm, 2cm)  [fill=green] {$3$};
			\node[node]	(4) at (-25mm, 4cm)  [fill=green] {$1$};			
			\node[node]	(5) at (0cm, 4cm)  [fill=orange] {$5$};			
			\node[node]	(6) at (0cm, 2cm)  [fill=magenta] {$6$};
			\node[node]	(7) at (-25mm, 2cm)  [fill=orange] {$4$};				

				\path
				 [dashed] 
				 (2) edge node {} (1) 
				(2) edge node {} (3)
				(3) edge node {} (7)
				(7) edge node {} (4)
				(4) edge node {} (2)
				(4) edge node {} (5)
				(5) edge node {} (6)
				;
		\end{tikzpicture}}
		&	
$\Delta_2 + \Delta_4 + \Delta_7$
\hspace{-0.6in}
{\small \begin{tikzpicture}	[->,>=stealth',shorten >=1pt,auto, node distance=1.5cm, thick,node/.style={circle,draw}]
	                 \node[node]	(1) at (-75mm, 4cm)  [fill=yellow] {$7$};
			\node[node]	(2) at (-5cm, 4cm)  [fill=orange] {$2$};
			\node[node]	(3) at (-5cm, 2cm)  [fill=green] {$3$};
			\node[node]	(4) at (-25mm, 4cm)  [fill=green] {$1$};			
			\node[node]	(5) at (0cm, 4cm)  [fill=magenta] {$5$};			
			\node[node]	(6) at (0cm, 2cm)  [fill=blue!40] {$6$};
			\node[node]	(7) at (-25mm, 2cm)  [fill=orange] {$4$};			

				\path
				 [dashed] 
				 (2) edge node {} (1) 
				(2) edge node {} (3)
				(3) edge node {} (7)
				(7) edge node {} (4)
				(4) edge node {} (2)
				(4) edge node {} (5)
				(5) edge node {} (6)
				;
		\end{tikzpicture}}	
  \\
\\
\hline
  \\
$\Delta_3 + \Delta_4 + \Delta_5$
\hspace{-0.6in}
{\small \begin{tikzpicture}	[->,>=stealth',shorten >=1pt,auto, node distance=1.5cm, thick,node/.style={circle,draw}]
	                 \node[node]	(1) at (-75mm, 4cm)  [fill=brown] {$7$};
			\node[node]	(2) at (-5cm, 4cm)  [fill=green] {$2$};
			\node[node]	(3) at (-5cm, 2cm)  [fill=orange] {$3$};
			\node[node]	(4) at (-25mm, 4cm)  [fill=yellow] {$1$};			
			\node[node]	(5) at (0cm, 4cm)  [fill=green] {$5$};			
			\node[node]	(6) at (0cm, 2cm)  [fill=blue!40] {$6$};
			\node[node]	(7) at (-25mm, 2cm)  [fill=magenta] {$4$};					

				\path
				 [dashed] 
				 (2) edge node {} (1) 
				(2) edge node {} (3)
				(3) edge node {} (7)
				(7) edge node {} (4)
				(4) edge node {} (2)
				(4) edge node {} (5)
				(5) edge node {} (6)
				;
		\end{tikzpicture}}
 &  
		&	
  \\
\\
\hline

\end{tabular}
}
\caption{The balanced colourings corresponding to the non join-irreducible elements in the lattice $V_{\N}$ of the synchrony subspaces for the $1$-input regular coupled cell network $\NN$ in Figure~\ref{fig:subN2}.} \label{tab:nonirredu}
\end{center}
\end{table}
\END
\end{ex}

Taking into account the notion of balanced colouring and the results in Lemma~\ref{lem:eigen} and Theorem~\ref{thm:setirredu}, we give next a characterization of the possible patterns of balanced colourings associated to the join-irreducible synchrony subspaces for $\NN$.

\begin{cor}\label{cor:corirred}
Let $\NN$ be a 1-input regular coupled cell network with $n$ cells and a $m$-cell ring, with $1 \le m \le n$. Given a balanced colouring of the cells of $\NN$ associated to a join-irreducible synchrony subspace for $\NN$, we have one of the following:
\begin{itemize}
\item[(i)] all the cells have the same colour,
\item[(ii)] the cells in the ring are coloured with a sequence of $\frac{m}{d}$ different colours, with $d$ a proper divisor of $m$ different from $1$, that is repeated sequentially $d$ times and if we wrap each of the rooted trees over the ring then overlapping cells have the same colour,
\item[(iii)] the cells in the ring have different colours and if we wrap each of the rooted trees over the ring then overlapping cells have the same colour,
\item[(iv)] all the cells with the exception of a nonempty subset of leafs have the same colour and the cells in that subset of leafs have another different colour,
\item[(v)] all the cells with the exception of those in a nonempty subset  $J$ of subtrees, with $s= max \{ depth(\BB)|\ \BB \in J \} > 1$, have the same colour $r$. For each subtree $\BB_i \in J$, cells at different distances from the root cell of the subtree have different colours and cells at the same distance have the same colour. Moreover, given any two subtrees, if a cell in one of the subtrees is distant from the root cell of its subtree the same distance as a cell in the other subtree is distant from the corresponding root cell then the two cells have the same colour.

\end{itemize}
\end{cor}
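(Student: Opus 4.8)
The plan is to translate each of the five families of join-irreducible synchrony subspaces produced by Theorem~\ref{thm:setirredu} into its associated balanced colouring, using the standard dictionary that two cells carry the same colour precisely when the coordinate equality $x_c = x_d$ is forced by the defining equalities of the subspace. Because Theorem~\ref{thm:setirredu} already pins down each join-irreducible element together with a generating set of eigenvectors and Jordan chains of $A$, and since these elements are synchrony subspaces, the induced colourings are automatically balanced; what remains is only to read off the colour classes in combinatorial terms. The two extreme families are immediate: for $E_{\omega_0} = <(1,\dots,1)>$ every equality $x_c = x_d$ holds, giving case $(i)$; and for $E_{\omega_0} + <u_1 + \cdots + u_l>$ with the $u_j$ the zero-eigenvectors attached to a chosen set of leaf cells (Lemma~\ref{lem:eigen}(ii)), a general element takes one common value on the chosen leaves and another common value on every remaining cell, which is exactly case $(iv)$.

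The heart of the argument is the passage from the eigenvector description in Lemma~\ref{lem:eigen}(i) to the wrapping picture of cases $(ii)$--$(iii)$. For a proper divisor $d$ the ring coordinates of the generators satisfy $x_i = x_{i+m/d} = \cdots$, so the colour of ring cell $i$ depends only on $i \bmod (m/d)$; hence the ring is coloured by a block of $m/d$ colours repeated $d$ times, and case $(iii)$ is the degenerate instance where no proper period occurs and all $m$ ring cells differ. That these $m/d$ (respectively $m$) colours are genuinely distinct I would deduce from the fact that $\omega_d$ has order $m/d$ (respectively from the invertibility of the associated Vandermonde/DFT block), which rules out accidental identifications within one period. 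For the tail cells I would use that $A v_j = \omega_j v_j$ together with the $1$-input property forces the recursion $(v_j)_c = \omega_j^{-1}(v_j)_{p(c)}$, where $p(c)$ is the unique cell feeding $c$; iterating this along the input path from the ring to a cell at distance $\delta$ shows its generator value equals $\omega_j^{k}$ for the ring index $k$ reached by stepping $\delta$ times around the ring from the tree root. This is exactly what it means to wrap each rooted tree over the ring, and two cells overlap under this wrapping precisely when the Lemma~\ref{lem:eigen}(i) identities force $x$-equality, i.e.\ when they share a colour.

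For case $(v)$ I would start from the Jordan chain $(\tilde u_1, \ldots, \tilde u_q)$ of Lemma~\ref{lem:eigen}(iii) with all constants $\alpha_i$ equal to $1$, recording that $\tilde u_j$ has entry $1$ exactly on the cells at distance $j-1$ from the root of their subtree, summed over the chosen subtrees $\BB_i \in J$, and $0$ elsewhere. Evaluating a general element $\beta(1,\dots,1) + \sum_j \gamma_j \tilde u_j$ cell by cell then shows that every cell outside the subtrees of $J$ retains the background value $\beta$ (colour $r$), while a cell at distance $\delta$ from its subtree root takes value $\beta + \gamma_{\delta+1}$, which depends only on $\delta$; this produces exactly the distance-graded colouring of case $(v)$, shared across distinct subtrees at equal distance. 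The disjointness of the distance classes $C_l \cap C_p = \emptyset$ established in Lemma~\ref{lem:eigen}(iii) is what guarantees these colour classes are well defined.

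The main obstacle is the wrapping step of cases $(ii)$--$(iii)$: converting the bare algebraic identities $v_{j,i} = \omega_j^{k}$ of Lemma~\ref{lem:eigen}(i) into the explicit geometric statement about laying each rooted tree around the ring, and simultaneously justifying that the claimed colours within one ring period, and across overlapping tree cells, are pairwise distinct rather than merely mutually consistent. Everything else is a direct bookkeeping translation of the coordinate equalities already extracted in the proofs of Lemma~\ref{lem:eigen} and Theorem~\ref{thm:setirredu}.
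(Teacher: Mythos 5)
Your proposal is correct and takes essentially the same approach as the paper: the paper states Corollary~\ref{cor:corirred} with no separate proof, presenting it as a direct translation of the coordinate-equality descriptions of the join-irreducible subspaces in Lemma~\ref{lem:eigen} and Theorem~\ref{thm:setirredu} into colour classes, which is precisely the dictionary you apply. The details you supply (the recursion $(v_j)_c=\omega_j^{-1}(v_j)_{p(c)}$ along tails giving the wrapping picture, the order of $\omega_d$ ensuring the $m/d$ ring colours are distinct, and the evaluation of general elements $\beta(1,\dots,1)+\sum_j\gamma_j\tilde u_j$ for the Jordan-chain case) simply make explicit what the paper leaves implicit.
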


\begin{ex}  \normalfont 
Consider the balanced colourings in Table~\ref{tab:irredu} that correspond to the  join-irreducible elements of the lattice $V_{\N}$ of the synchrony subspaces for the $1$-input regular coupled cell network $\NN$ in Figure~\ref{fig:subN2} and the classification in Corollary~\ref{cor:corirred}. The colouring $\Delta_1$ satisfies (i), the colouring $\Delta_2$ satisfies (ii), the colouring $\Delta_3$ satisfies (iii), the colourings $\Delta_4$ and $\Delta_5$ satisfy (iv) and the colourings $\Delta_6$,  $\Delta_7$ and $\Delta_8$ satisfy (v).
\END
\end{ex}

Given that each synchrony subspace for a 1-input regular coupled cell network is the sum of join-irreducible synchrony subspaces and considering the possible patterns of balanced colourings associated to the join-irreducible elements in the lattice of synchrony subspaces, described in Corollary~\ref{cor:corirred}, we end up with a charaterization of the possible patterns of balanced colourings for the synchrony subspaces for a 1-input regular coupled cell network.

\begin{cor} \label{cor:corall}
Let $\NN$ be a 1-input regular coupled cell network. Given a balanced colouring of the cells of $\NN$ associated to a synchrony subspace for $\NN$, we have
\begin{itemize}
\item[(i)]  The ring can have all cells with the same colour, the cells coloured in a sequence of colours (with no two consecutive cells with the same colour) that are repeated sequentially, or each cell with a different colour. 
\item[(ii)]  Each tail, when wrapped over the ring can have the colour of each cell coinciding with the colour of the cell of the ring that it overlaps, it may be that this happens only for the first cells of the tail and that each of the subsequent cells has another different colour or it may happen that each cell of the tail has another different colour. 
\item[(iii)]  Given two tails, if we start by wrapping each of them over the ring while the colour of each cell coincides with the colour of the cell of the ring that it overlaps then, fixing any two cells with the same colour, one from each of the tails, the sequence of (different) colours from the first cell of the tail that was not wrapped over the ring and the fixed cell is the same for the two tails. 
\end{itemize}
\end{cor}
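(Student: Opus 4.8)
The plan is to argue directly from the balanced colouring $\bowtie$ attached to an arbitrary synchrony subspace $\Delta_\bowtie \in V_{\N}$, exploiting the fact that in a $1$-input regular network each cell has a single input, so that $c\bowtie d$ forces $\mathrm{input}(c)\bowtie\mathrm{input}(d)$. Everything rests on one elementary move, \emph{backward propagation}: if two cells share a colour then so do their inputs, hence (by iteration) their $i$-th ancestors for every $i$. Each of (i)--(iii) is read off by running this move along the ring and along the tails. This refines the picture of Corollary~\ref{cor:corirred}: since $\Delta_\bowtie$ is a sum of join-irreducibles, $c\bowtie d$ holds iff $c$ and $d$ share a colour in every join-irreducible summand, and the colourings (i)--(v) of Corollary~\ref{cor:corirred} are exactly the building-block and extreme cases that arise below.

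For (i) I would first settle the ring. Writing the ring as $\Z_m$ with $\mathrm{input}(p)=p-1$, backward propagation shows that $\mathrm{colour}(p)=\mathrm{colour}(q)$ implies $\mathrm{colour}(r)=\mathrm{colour}(r+(q-p))$ for every $r$; hence the set $H=\{\,s:\ \mathrm{colour}(r)=\mathrm{colour}(r+s)\ \forall r\,\}$ is a subgroup of $\Z_m$, and two ring cells agree iff their indices are congruent modulo $H$. Since subgroups of $\Z_m$ are the $\langle m/e\rangle$ for divisors $e\mid m$, the ring colouring is a coset partition into $[\Z_m:H]$ classes. This yields exactly the three alternatives of (i): all equal when $H=\Z_m$, all distinct when $H=\{0\}$, and a period-$[\Z_m:H]$ repetition otherwise, with no two consecutive cells equal because $1\notin H$ unless $H=\Z_m$.

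For (ii) I fix a tail $c_0,c_1,\dots,c_\ell$ attached at the ring cell $c_0=p_0$, where $\mathrm{input}(c_k)=c_{k-1}$, and say $c_k$ \emph{matches} if $\mathrm{colour}(c_k)=\mathrm{colour}(p_0+k)$, the ring position it wraps onto. The first lemma is that a tail cell shares a colour with \emph{some} ring cell iff it matches: propagating $\mathrm{colour}(c_k)=\mathrm{colour}(q)$ back to $c_0$ gives $p_0\equiv q-k \pmod H$, so $q\equiv p_0+k$ and the colour is forced to be that of the wrap position. The second lemma is that matching propagates backward, so the matching cells form an initial segment $c_0,\dots,c_j$; beyond it the colours are new, for if $c_{k'},c_k$ with $j<k'<k$ shared a colour, propagating back $k'-j$ steps would give $\mathrm{colour}(c_{k-k'+j})=\mathrm{colour}(c_j)$, a ring colour, making the non-matching cell $c_{k-k'+j}$ match, a contradiction. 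This is precisely the trichotomy of (ii) (wrapped prefix, then each further cell a distinct new colour).

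Part (iii) is the step I expect to be the main obstacle, since it couples two tails; the clean resolution is that a cross-tail colour coincidence forces both matching thresholds to be crossed \emph{simultaneously}. Given $\mathrm{colour}(c_k^{(1)})=\mathrm{colour}(c_{k'}^{(2)})$ with $k>j_1$, $k'>j_2$, backward propagation keeps the two colours equal; let $i^\ast$ be the first step at which this common colour becomes a ring colour (it exists because the ring cells $c_0^{(i)}$ carry ring colours, so propagation reaches one within $\min(k,k')$ steps). Then $c_{k-i^\ast}^{(1)}$ matches, so $k-i^\ast\le j_1$, while $c_{k-i^\ast+1}^{(1)}$ does not, so $k-i^\ast\ge j_1$; hence $k-i^\ast=j_1$ and, symmetrically, $k'-i^\ast=j_2$. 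Therefore $k-j_1=k'-j_2$ and, reading the propagated equalities, $\mathrm{colour}(c_{j_1+t}^{(1)})=\mathrm{colour}(c_{j_2+t}^{(2)})$ for $1\le t\le i^\ast$; that is, the two unwrapped colour sequences leading from the first non-wrapped cell up to the fixed cells coincide term by term, which is the assertion of (iii).
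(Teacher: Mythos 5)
Your proof is correct, but it takes a genuinely different route from the paper. The paper obtains this corollary as a consequence of its lattice machinery: every synchrony subspace is a sum of join-irreducible ones (Corollary~\ref{cor:sublattice} together with Theorem~\ref{thm:join-dense}), the colouring of a sum is the common refinement of the colourings of the summands (two cells share a colour in the sum if and only if they do so in every summand), and the patterns for the join-irreducible colourings are those of Corollary~\ref{cor:corirred}, which in turn rest on the eigenvector and Jordan-chain analysis of Lemma~\ref{lem:eigen} and Theorem~\ref{thm:setirredu}. You instead argue directly from the balanced condition: in a $1$-input network, equality of colours propagates backwards along the unique inputs, and each of (i)--(iii) follows by iterating this single move. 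Your argument is self-contained and more elementary --- indeed it never actually uses the join-irreducible decomposition announced in your preamble, which you could simply delete --- and it yields conclusions sharper than the stated corollary: the ring classes are exactly the cosets of a subgroup $H \leq \Z_m$ (so within one period \emph{all} colours are distinct, not merely consecutive ones), the wrapped cells of each tail form an initial segment cut at a definite threshold, and in (iii) the two thresholds are crossed simultaneously, giving term-by-term equality of the unwrapped colour sequences. What the paper's route buys is coherence with the rest of Section~\ref{sec:lattice_1_regular}: the patterns are tied to the spectral data of the adjacency matrix, and the corollary comes essentially for free once Corollary~\ref{cor:corirred} is established; what your route buys is independence from that machinery and a fully explicit argument where the paper leaves the refinement step implicit. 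One small point worth adding in (iii): if one fixed cell lies in its wrapped segment and the other does not, they cannot share a colour --- this is immediate from your first lemma in (ii) --- so your standing assumption $k > j_1$, $k' > j_2$ does cover all nontrivial cases.
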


\begin{ex}  \normalfont 
Consider, as examples of Corollary~\ref{cor:corall}, the balanced colourings in Tables~\ref{tab:irredu}~and~\ref{tab:nonirredu}. For a more illustrative example of Corollary~\ref{cor:corall} (iii), consider the balanced colouring in Figure~\ref{fig:1-inputNet}.
\END
\end{ex}

\begin{rmk}\normalfont
(i) A 1-input regular coupled cell network where the ring consists of only one cell with a self-loop is a very particular case of a regular Auto-regulation Feed-forward Neural network (AFFNN).  AFFNNs are special types of networks where the cells are arranged in layers, there is no connection among cells in the same layer, the cells in the first layer either receive no input or have self-loops (there is at least one cell with a self-loop), each cell in another layer only receives connections from cells in the previous layer.  A 1-input regular coupled cell network where the ring consists of only one cell  is a AFFNN where the first layer has only one cell and each cell in the other layers receive only one connection from one of the cells in the previous layer. In \cite{ADF17}, Aguiar~\etal  describe the robust balanced colourings that can occur for AFFNNs. Our result in Corollary~\ref{cor:corall} (ii) for this particular type of 1-input networks is in accordance with the results in  \cite{ADF17}, namely Theorem 4.9, where the tails are the paths.
(ii) If $\NN$ is a 1-input regular coupled cell network and we identify all the cells in the ring we get a network, called a quotient network, that is a 1-input network as in (i). 
Each balanced colouring for the quotient network lifts to a balanced colouring for the network $\NN$ where  the cells in the rooted trees keep the same colours and all the cells in the ring have the colour of the lifted cell, the cell with the self-loop. Again, our result in Corollary~\ref{cor:corall} (ii) for this particular type of balanced colourings of 1-input networks is in accordance with the results in  \cite{ADF17}.
\END
\end{rmk}

\section{Conclusions} \label{sec:conclusions}
Our main results are in Section~\ref{sec:lattice_1_regular}. Corollary~\ref{lem:eigen} allows us to identify the eigenvectors of the adjacency matrix of a 1-input regular network without having to compute them and to obtain the Jordan chains directly from the subtrees of the directed rooted trees of the network graph. Consequently, we are able to identify, in Theorem~\ref {thm:setirredu}, the set of join-irreducible synchrony subspaces for the lattice of synchrony subspaces of a 1-input regular coupled cell network. By Aguiar~\etal~\cite{AADF11}, this set is join-dense, that is, we are able to get the all lattice through the join (sum) of these join-irreducible elements. 

Synchrony-breaking bifurcations for 1-input regular networks can be relevant from the point of view of applications. For example, in \cite{GPSZ09}, Golubitsky~\etal\  discuss how the periodic forcing of the first node in a chain of coupled identical systems, corresponding to a 1-input network with three-cells, whose internal dynamics is each tuned near a point of Hopf bifurcation, can lead naturally to successive amplification of the incoming signal. Their results have implication for certain models of the auditory system, in particular, models of the basilar membrane and attached hair bundles. 
The identification of the elements in the lattice of synchrony subspaces of a network is important in the study of synchrony-breaking bifurcations. Given a 1-input regular network it is straightforward to identify, from our results, for example, the three-dimensional synchrony subspaces for that network. Given a codimension one steady-state or Hopf bifurcation from a synchronous equilibrium for that network, using then the results in Leite and Golubitsky~\cite{LG06}, on the classification of the codimension one steady-state and Hopf bifurcations from a synchronous equilibrium in three-cell homogeneous, we are able to identify the new bifurcating branches with three-dimensional synchrony pattern, together with their stability. 

Combining the results in Section~\ref{sec:lattice_1_regular} with those in Section~\ref{sec:lattice_union}, on how to obtain the lattice of synchrony subspaces for a disjoint union of networks from the lattices of synchrony subspaces of the component networks, we get the lattice of synchrony subspaces for the edge-type subnetworks $\NN_{\E_i}$ of a homogeneous network $\NN$ with asymmetric inputs. By Aguiar and Dias~\cite{AD14}, the lattice of synchrony subspaces for $\NN$ is given by the intersection of the lattices of synchrony subspaces for the edge-type subnetworks $\NN_{\E_i}$. 
The results in Section~\ref{sec:lattice_1_regular} give then a more expeditious and efficient way of calculating the lattice of synchrony subspaces for the particular case of homogeneous (regular) coupled cell networks with asymmetric inputs, rather than the one presented in Aguiar and Dias~\cite{AD14} for general homogeneous (regular) coupled cell networks. Moreover, they allow the implementation of an efficient algorithm without problems of numerical rounding.

The results presented here extend trivially to a particular type of weighted networks with identical cells, where edges of the same type have associated the same weight or strength and different edge-types correspond to different weights. 
In particular, it is immediate to conclude that analogous results to those in Lemma~\ref{lem:eigen}, and thus in Theorem~\ref{thm:setirredu}, also occur for adjacency matrices of the form (\ref{eq:matrix} ) such that the nonzero entries are all equal to a value $k \in \RR \setminus \{1\}$, as happens with the adjacency matrices of such networks. An application of this type of networks to animal locomotion appears in Golubitsky~\etal~\cite{GSBC98},~\cite{GSBC99}, Buono and Golubitsky~\cite{BG01}  and  Stewart~\cite{S17} and has obvious utility in robotics (see, for example, Righetti and Ijspeert~\cite{RI06} and  In and Palacios~\cite{IP17}). In \cite{GSBC98}, ~\cite{GSBC99}, Golubitsky~\etal \  propose a class of such particular kind of weighted networks which provide models for the gaits of $2n$-legged animals. The analysis made there for an eight-cell network modelling the quadruped locomotion was later extended by Buono and Golubitsky~\cite{BG01}  and very recently by Stewart~\cite{S17}.

 \subsection*{Acknowledgments} 

The author thanks Pedro Soares for useful comments.

\vspace{.1in}

%%%%%%%


\begin{thebibliography}{99}

\bibitem{APM16} D.M. Abrams, L.M. Pecora and A.E. Motter.
Introduction to focus issue: Patterns of network synchronization,  
{\em Chaos: An Interdisciplinary Journal of Nonlinear Science} {\bf 26} (2016) 094601.

\bibitem{AF10}
N. Agarwal  and M. J. Field. 
Dynamical equivalence of network architecture for coupled dynamical systems I: asymmetric inputs,  
{\em Nonlinearity} {\bf 23} (2010) 1245--1268. 

\bibitem{AADF11}
M. Aguiar, P. Ashwin, A. Dias, and M. Field. 
Dynamics of coupled cell networks: synchrony, heteroclinic cycles and inflation,  
{\em J. Nonlinear Sci.} {\bf 21} (2) (2011) 271--323. 

\bibitem{AD14} M. A. D. Aguiar and A. P. S. Dias. 
The lattice of synchrony subspaces of a coupled cell network: Characterization and computation algorithm,
{\em Journal of Nonlinear Science}, {\bf 24} (6) (2014), 949--996.

\bibitem{AD17} M. A. D. Aguiar and A. P. S. Dias. 
Heteroclinic network dynamics on joining coupled cell networks, 
{\em Dynamical Systems: An Int. Journal}, {\bf 32} (1) (2017), 4--22.

\bibitem{ADF17}
M.A.D. Aguiar,  A.P.S. Dias and F. Ferreira.
Patterns of Synchrony for Feed-forward and Auto-regulation Feed-forward Neural Networks, 
{\em Chaos: An Interdisciplinary Journal of Nonlinear Science} {\bf 27} (1) (2017) 013103.

\bibitem{ADS17}
M.A.D. Aguiar, A.P.S. Dias and P. Soares.
Characterization of Fundamental Networks,
{\em Proceedings of the Royal Society of Edinburgh: Submitted}

\bibitem{AR12}
M.A.D. Aguiar and H. Ruan. 
Evolution of synchrony under combination of coupled cell networks,  
{\em Nonlinearity} {\bf 25} (2012) 3155--3187. 

\bibitem{AB02} R. Albert and A.-L. Barab\'asi. 
Statistical mechanics of complex networks,
{\em Rev. Mod. Phys.} {\bf 74} (1) (2002) 47.

\bibitem{ADKZ08} A. Arenas, A. D\'iaz-Guilera, J. Kurths, Y. Moreno and C. Zhoug.
Synchronization in complex networks,
{\em Physics Reports} {\bf 469} (3) (2008) 93--153.

\bibitem{AP13}
P.~ Ashwin and C.~Postlethwaite. 
On designing heteroclinic networks from graphs, 
{\em Phys. D} 
{\bf 265} (2013) 26--39.

\bibitem{BG01} P.L. Buono and M. Golubitsky. 
Models of central pattern generators for quadruped locomotion: I. primary gaits,
{\em J. Math. Biol.} {\bf 42} (4) (2001) 291--326. 

\bibitem{BLMCH06}
S.~Boccaletti, V.~Latora, Y.~Moreno, M.~Chavez and D.-H.~Hwang.
Complex networks: Structure and dynamics,
{\em Physics Reports}  {\bf 424} (2006) 175--308.

\bibitem{DP90}
B. A. Davey and H. A. Priestley. 
{\em Introduction to Lattices and Order}, 
Cambridge University Press, 1990.

\bibitem{DP10}
 Dias, A. P. S., Paiva, R. C. 
 Hopf bifurcation in coupled cell networks with abelian symmetry,
 {\em Bol. Soc. Port. Mat. }Special Issue (2010), 110--115. 

\bibitem{F04}
M. J. Field. 
Combinatorial dynamics,
{\em Dyn. Syst.} {\bf 19} (3) (2004) 217--243.

\bibitem{F14}
M. Field. 
Heteroclinic networks in homogeneous and heterogeneous identical cell systems, 
{\em J. Nonlinear Sci.} {\bf 25} (3) (2015) 779--813. 

\bibitem{F17}
M. J. Field. 
Patterns of desynchronization and resynchronization in heteroclinic networks,  
{\em Nonlinearity} {\bf 30} (2017) 516--557. 

\bibitem{G14}  A. Ganbat. 
Reducibility of steady-state bifurcations in coupled cell systems, 
{\em J. Math. Anal. Appl.} {\bf 415} (1) (2014), 159--177.

\bibitem{GLR86}
I.~Gohberg, P.~Lancaster and L.~Rodman. 
{\em Invariant subspaces of matrices with applications}, 
Wiley: New York, 1986. 

\bibitem{GL09} 
M. Golubitsky and R.Lauterbach. 
Bifurcations from synchrony in homogeneous networks: linear theory, 
{\em SIAM J. Appl. Dyn. Syst.} \textbf{8} (1) (2009), 40--75.

\bibitem{GNS04}
M. Golubitsky, M. Nicol and I. Stewart.
Some curious phenomena in coupled cell systems,
{\em J. Nonlinear Sci.} \textbf{14} (2) (2004), 207--236. 

\bibitem{GPSZ09} 
M. Golubitsky, C. Postlethwaite, L-J. Shiau and Y. Zhang. 
The feed-forward chain as a filter amplifier motif,
In: {\em Coherent Behavior in Neuronal Networks} (K. Josic, M. Matias, R. Romo, and J. Rubin, eds.) Springer (2009) 95--120.

\bibitem{GS05}
M.~Golubitsky and I.~Stewart.
Nonlinear dynamics of networks: the groupoid formalism,
{\em  Bull. Amer. Math. Soc.}  {\bf 43}  (2006) (3) 305--364.

\bibitem{GSBC98} M. Golubitsky, I. Stewart, P.L. Buono and J.J. Collins. 
A modular network for legged locomotion,
{\em Physica D} {\bf 115} (1998) 56--72.

\bibitem{GSBC99} M. Golubitsky, I. Stewart, P.L. Buono and J.J. Collins. 
Symmetry in locomotor central pattern generators and animal gaits,
{\em  Nature} {\bf 401} (1999) 69--695.

\bibitem{GST05}
M.~Golubitsky, I.~Stewart and A.~T\"{o}r\"{o}k.  Patterns
of Synchrony in Coupled Cell Networks with Multiple Arrows,
{\em SIAM J. Appl. Dynam. Sys.}  {\bf 4} (1) (2005) 78--100.

\bibitem{IP17} V. In and A. Palacios. 
ANIBOT: Biologically-inspired animal robot,
{\em Symmetry in Complex Network Systems: Connecting Equivariant Bifurcation Theory with Engineering Applications}, Springer, 2017.

\bibitem{K12} 
Y. Kuramoto.
{\em Chemical oscillations, waves, and turbulence},
 Springer, Vol. 19,  2012.


\bibitem{LG06} 
M.C.A. Leite and M. Golubitsky. 
Homogeneous three-cell networks, 
{\em Nonlinearity} {\bf 19} (2006) 2313--2363.

\bibitem{M14} 
C. Moreira. 
On bifurcations in lifts of regular uniform coupled cell networks,
{\em  Proc. R. Soc. Lond. Ser. A Math. Phys. Eng. Sci.} {\bf 470} (2169) (2014) 20140241.

\bibitem{N03}  M.E.J. Newman. 
The Structure and Function of Complex Networks,
{\em SIAM Review} {\bf 45} (2) (2003) 167--256.

\bibitem{NRS16} 
E. Nijholt, B. Rink and J. Sanders.
Graph fibrations and symmetries of network dynamics, 
{\em J. Differ. Equations} {\bf 261} (2016), 4861-- 4896.

\bibitem{NRS17} 
E. Nijholt, B. Rink and J. Sanders.
Projection blocks in homogeneous coupled cell networks,
{\em Dynamical Systems} {\bf 32} (1) (2017), 164--186.

\bibitem{RI06}
L. Righetti and A.J. Ijspeert.
Design methodologies for central pattern generators: an application to crawling humanoids,
{\em Proceedings of Robotics: Science and Systems} (2006), 191--198.

\bibitem{SPMS17}
T. Stankovski, T. Pereira, P.V. McClintock and  A. Stefanovska. 
Coupling functions: Universal insights into dynamical interaction mechanisms, 
{\em Rev. Mod. Phys.} {\bf 89} (4) (2017), 045001.

\bibitem{STMS15}
T. Stankovski, V. Ticcinelli, P.V. McClintock and  A. Stefanovska. 
Coupling functions in networks of oscillators,
{\em New Journal of Physics} {\bf 17} (3) (2015) 035002.


\bibitem{S07} 
I.~Stewart. 
The lattice of balanced equivalence relations of a coupled cell network,   
{\em Math. Proc. Cambridge Philos. Soc.}  
{\bf 143} (1) (2007) 165--183. 

\bibitem{S17} I.~Stewart.
Spontaneous Symmetry-Breaking in a Network Model for Quadruped Locomotion,
{\em Int. Jour. of Bif. and Chaos} {\bf 27} (14) (2017) 1730049.

\bibitem{SG11} 
 I. Stewart and M. Golubitsky. 
 Synchrony-breaking bifurcation at a simple real eigenvalue for regular networks 1: 1-dimensional cells,
 {\em SIAM J. Appl. Dynam. Sys.} {\bf 10} (4) (2011) 1404--1442. 

\bibitem{SGP03}
I.~Stewart, M.~Golubitsky and  M.~Pivato.
Symmetry groupoids and patterns of synchrony in coupled cell networks,
{\em SIAM J. Appl. Dynam. Sys.} {\bf 2} (2003) 609--646.

\bibitem{WS98}  D.J. Watts and S.H. Strogatz. 
Collective dynamics of `small-world' 'networks,
{\em Nature} {\bf 393} (1998) 440--442.


\end{thebibliography}
\end{document}